\newcommand{\bfi}{\bfseries\itshape}
\newcommand{\rem}[1]{}
\def\thefigure{\thesection.\@arabic\c@figure}
\def\fps@figure{h, t}
\def\thetable{\thesection.\@arabic\c@table}
\def\fps@table{h, t}
\newcommand \al{\alpha}
\newcommand\be{\beta}
\newcommand\de{\delta}
\newcommand\et{\eta}
\renewcommand\th{\theta}
\newcommand\la{\lambda}
\newcommand\si{\sigma}
\newcommand\ph{\varphi}
\newcommand\ps{\psi}
\newcommand\om{\omega}
\newcommand\Ga{\Gamma}
\newcommand\La{\Lambda}
\newcommand\Ph{\Phi}
\newcommand\Ps{\Psi}
\newcommand\Om{\Omega}
\newcommand\ie{i.e.\ }
\renewcommand\o{\circ}
\newcommand\x{\times}
\newcommand\on{\operatorname}
\newcommand\Ad{\on{Ad}}
\newcommand\Emb{\on{Emb}}
\newcommand\ev{\on{ev}}
\newcommand\Den{\on{Den}}
\newcommand\pr{\on{pr}}
\newcommand\F{\mathcal{F}}
\newcommand\Diff{\on{Diff}}
\newcommand\g{\mathfrak g}
\newcommand\dd{{\mathbf d}}
\newcommand\ii{{\mathbf i}}
\newcommand\J{\mathbf{J}}
\newcommand\X{\mathfrak X}
\renewcommand\P{\mathcal P}
\newcommand\ZZ{\mathbb Z}
\newcommand\RR{\mathbb R}
\newenvironment{proof}[1][Proof]{\noindent\textbf{#1.} }{\ \rule{0.5em}{0.5em}}
\newcommand{\fint}{-\!\!\!\!\!\!\int}
\def\XXint#1#2#3{{\setbox0=\hbox{$#1{#2#3}{\int}$ }
\vcenter{\hbox{$#2#3$ }}\kern-.5\wd0}}
\begin{document}

\newtheorem{theorem}{Theorem}[section]
\newtheorem{definition}[theorem]{Definition}
\newtheorem{lemma}[theorem]{Lemma}
\newtheorem{remark}[theorem]{Remark}
\newtheorem{proposition}[theorem]{Proposition}
\newtheorem{corollary}[theorem]{Corollary}
\newtheorem{example}[theorem]{Example}

\def\below#1#2{\mathrel{\mathop{#1}\limits_{#2}}}



\title{Dual pairs in fluid dynamics}
\author{Fran\c{c}ois Gay-Balmaz$^{1,2}$ and Cornelia Vizman$^{3}$ }

\addtocounter{footnote}{1}
\footnotetext{Control and Dynamical Systems, California Institute of Technology 107-81, Pasadena, CA 91125, USA.
\texttt{fgbalmaz@cds.caltech.edu}
\addtocounter{footnote}{1} }
\footnotetext{LMD, Ecole Normale Sup\'erieure/CNRS, Paris, France.
\texttt{gaybalma@lmd.ens.fr}
\addtocounter{footnote}{1} }
\footnotetext{Department of Mathematics,
West University of Timi\c soara, RO--300223 Timi\c soara, Romania.
\texttt{vizman@math.uvt.ro}
\addtocounter{footnote}{1} }

\date{ }
\maketitle

\makeatother
\maketitle


\noindent \textbf{AMS Classification:} 53D17; 53D20; 37K65; 58D05; 58D10

\noindent \textbf{Keywords:} Dual pair, momentum map, Euler equation, $n$-Camassa-Holm equation, central extension, quantomorphisms.

\begin{abstract}
This paper is a rigorous study of the dual pair structure of the ideal fluid (\cite{MaWe83}) and the dual pair structure for the $n$-dimensional Camassa-Holm (EPDiff) equation (\cite{HoMa2004}), including the proofs of the necessary transitivity results. In the case of the ideal fluid, we show that a careful definition of the momentum maps leads naturally to central extensions of diffeomorphism groups such as the group of quantomorphisms and the Ismagilov central extension.
\end{abstract}

\tableofcontents



\section{Introduction}

The concept of dual pair, formalized by \cite{We83}, is an important notion in Poisson geometry and has many applications in the context of
momentum maps and reduction theory, see e.g. \cite{OrRa04} and references therein. Let $(M,\omega)$ be a finite dimensional symplectic manifold and let $P_1, P_2$ be two finite dimensional Poisson manifolds. A pair of Poisson mappings
\begin{equation*}
P_1\stackrel{\J_1}{\longleftarrow}(M,\om)\stackrel{\J_2}{\longrightarrow} P_2
\end{equation*}
is called a \textit{dual pair\/} if $\ker T\J_1$ and $\ker T\J_2$ are symplectic orthogonal complements of one another, where $\ker T\J_i$ denotes the kernel of the tangent map $T\mathbf{J}_i$ of $\mathbf{J}_i$.
Dual pair structures arise naturally in classical mechanics. In many cases, the Poisson maps $\mathbf{J}_i$ are momentum mappings associated to Lie algebra actions on $M$.
For example, in \cite{Ma1987} (see also \cite{CuRo1982}, \cite{GoSt1987} 
and \cite{Iw1985}) it was shown that the concept of dual pair of momentum maps can be useful for the study of bifurcations in Hamiltonian systems with symmetry.

We now describe two fundamental ``dual pairs'' of momentum mappings arising in fluid dynamics and that we shall study in detail in this paper. Both of them have very attractive properties but present additional difficulties due to fact that the manifolds involved are infinite dimensional. This is why we describe them only at a formal level in this Introduction and we use the word ``dual pair" (with quotation mark) in a very formal sense. 
The first ``dual pair" is associated to the Euler equations of an ideal fluid and was discovered by \cite{MaWe83} in the context of Clebsch variables; the second one is associated to the $n$-dimensional Camassa-Holm equations and was discovered by \cite{HoMa2004} in the context of singular solutions.

Consider the Euler equations for an ideal fluid on a domain $S$,
\[
\partial_t u+u\cdot\nabla u=-\operatorname{grad}p,\quad\operatorname{div}u=0,
\]
where $u$ is the velocity and $p$ is the pressure. As shown in \cite{Ar1966}, the flow of the Euler equations describe geodesics on the group of volume preserving diffeomorphisms of $S$ relative to the right invariant $L^2$-metric. In \cite{MaWe83}, the ``dual pair'' for Euler equations is described as follows. Consider a symplectic manifold $(M,\omega)$, a volume manifold $(S,\mu)$, and let $\mathcal{F}(S,M)$ be the space of smooth maps from $S$ to $M$. The left action of the group $\operatorname{Diff}(M,\omega)$ of symplectic diffeomorphisms and the
right action of the group $\operatorname{Diff}(S,\mu)$ of volume preserving diffeomorphisms are two commuting symplectic actions on $\F(S,M)$. Their momentum maps $\mathbf{J}_L$ and $\mathbf{J}_R$
form the ``dual pair" for the Euler equation:

\begin{picture}(150,100)(-70,0)%
\put(110,75){$\F(S,M)$}

\put(90,50){$\mathbf{J}_L$}

\put(160,50){$\mathbf{J}_R$}

\put(52,15){$\X(M,\om)^*$}

\put(170,15){$\X(S,\mu)^*$.}

\put(130,70){\vector(-1, -1){40}}

\put(135,70){\vector(1,-1){40}}

\end{picture}\\
While the right leg represents Clebsch variables for the Euler equations, the left leg is a constant of motion for the induced Hamiltonian system on $\F(S,M)$.

Consider the $n$-dimensional Camassa-Holm equations on a domain $M$,
\[
\partial_tm+u\cdot\nabla m+\nabla u^\mathsf{T}\cdot m+m\operatorname{div} u=0,\quad m=(1-\alpha^2\Delta)u,
\]
whose flow describes geodesics on the group of all diffeomorphisms of $M$ relative to a right invariant $H^1$ metric, see \cite{HoMa2004}. For more general choices for $m$, these equations are known under the generic name of EPDiff equations (standing for the Euler-Poincar\'e equations associated with the diffeomorphism group). In \cite{HoMa2004}, the associated dual pair is described as follows. Let $\Emb(S,M)$ be the space of embeddings of $S$ into $M$ and consider the left action of the diffeomorphism group $\Diff(M)$ and the right action of the diffeomorphism group $\Diff(S)$. The dual pair consists of the momentum maps associated to the induced actions on the cotangent bundle $T^*\Emb(S,M)$ endowed with canonical symplectic form:

\begin{picture}(150,100)(-70,0)%
\put(100,75){$T^*\Emb(S,M)$}

\put(90,50){$\mathbf{J}_L$}

\put(160,50){$\mathbf{J}_R$}

\put(65,15){$\mathfrak{X}(M)^{\ast}$}

\put(170,15){$\mathfrak{X}(S)^{\ast}.$}

\put(130,70){\vector(-1, -1){40}}

\put(135,70){\vector(1,-1){40}}

\end{picture}\\
The left leg provides singular solutions of the EPDiff equation, whereas the right leg is a constant of motion associated to the collective motion on $T^*\operatorname{Emb}(S,M)$. In the one-dimensional case, $\mathbf{J}_L$ recovers the peakon solutions of the one-dimensional Camassa-Holm equation, \cite{CaHo1993}. In the particular case $S=M$, the left leg recovers the reduction map from Lagrangian to Eulerian variables.

More recently, the ideal fluid ``dual pair" has been shown to apply for the Vlasov equation in kinetic theory \cite{HoTr2009}. On the other hand, the EPDiff dual pair has been extended to the case of the Euler-Poincar\'e equations associated with the automorphism group of a principal bundle in \cite{GBTV10}, needed for the study of the singular solutions of the two-component Camassa-Holm equation and its generalizations, see \cite{HoTr2008}, \cite{GBTV10}, and references therein.

\medskip

As we mentioned above, the reader should be warned that we use here the word ``dual pair" (with quotation mark) in a formal sense for many reasons. Firstly, these examples are infinite dimensional and the definition of the concept of dual pair in infinite dimensions presents several difficult points that deserve further investigation. Secondly, the dual pair properties need to be shown in a rigorous way for the two situations. 
This means that one has to prove that the left action is transitive on the level subset of the right momentum map, and the right action is transitive on the level subset of the left momentum map.
Finally, the ideal fluid ``dual pair" reveals additional difficulties when one wants to check the momentum map properties of its two legs in a rigorous way. In fact, a reformulation is needed in this case. Interestingly enough, this reformulation leads naturally to well-known central extensions of groups of diffeomorphisms. The main goal of the paper is to overcome these difficulties in order to rigorously show the dual pair properties.

\paragraph{Plan and results of the paper.} In Section \ref{deux} below we review some basic facts concerning dual pairs in finite dimensions and mention several difficulties appearing in the infinite dimensional case. We define the concepts of weak dual pairs and dual pairs, appropriate in the infinite dimensional setting, and give criteria for weak dual pair and dual pair properties. The main goal of Section \ref{trois} is to provide a reformulation of the ideal fluid dual pair that allows us to show in a rigorous way that the two legs are momentum mappings. More precisely, in a first step we replace the groups $\operatorname{Diff}(M,\omega)$ and $\operatorname{Diff}(S,\mu)$ by the subgroups $\operatorname{Diff}_{ham}(M,\omega)$ and $\operatorname{Diff}_{ex}(S,\mu)$ of Hamiltonian and exact volume preserving diffeomorphisms, respectively. This allows us to show the existence of nonequivariant momentum maps. In order to have equivariance, required from the dual pair properties, we need to consider central extensions. In this context the group of quantomorphisms and the Ismagilov central extension of the group $\operatorname{Diff}_{ex}(S,\mu)$ appear naturally. Section \ref{quatre} deals with the dual pair properties for the ideal fluid case. We show that the pair of momentum maps obtained in Section \ref{trois} form a weak dual pair. When restricted to the space of embeddings and under the condition $H^1(S)=0$, the weak dual pair is shown to be a dual pair. In Section \ref{cinq} we prove that the pair of momentum maps associated to the EPDiff equation form a weak dual pair. Then we show that by restricting these momentum maps to the open subset $T^*\operatorname{Emb}(S,M)^\times$ of $1$-form densities which are everywhere non-zero on $S$, we obtain a dual pair. Finally, in Section \ref{six} we construct a natural symplectic map that relates both the Euler and the EPDiff dual pair constructions.

\medskip

Throughout the paper we use the hat calculus for differential forms on the Fr\'echet manifold $\F(S,M)$ which are induced from differential forms on $S$ and $M$.
It was developed in
\cite{Vi09} and its properties are presented in the Appendix to this paper.

\paragraph{Acknowledgements.}
We are grateful to Stefan Haller for very helpful suggestions,
especially regarding Lemma \ref{hard}. We thank Cesare Tronci for many useful and pleasant conversations about these and related matters.


\section{Dual pairs in infinite dimensions}\label{deux}

After giving general definitions and introducing the possible obstacles in
the infinite dimensional framework, this section formulates the concept of
weak dual pair and prepares the results that will be applied to the Euler and
the EPDiff dual pairs. 

\subsection*{Finite dimensional case}\label{finite}

Let $(M,\omega)$ be a \textit{finite dimensional} symplectic manifold and $P_1, P_2$ be two finite dimensional Poisson manifolds. A pair of Poisson mappings
\begin{equation}\label{pois}
P_1\stackrel{\J_1}{\longleftarrow}(M,\om)\stackrel{\J_2}{\longrightarrow} P_2
\end{equation}
is called a \textit{dual pair} \cite{We83}
if $\ker T\J_1$ and $\ker T\J_2$ are symplectic orthogonal complements of one another. That is, for each $m\in M$,
\begin{equation}\label{finite_dimensional_dual_pair}
(\ker T_m\J_1)^\om=\ker T_m\J_2.
\end{equation}
The dual pair is called \textit{full} if $\J_1:M\rightarrow P_1$ and $\J_2:M\rightarrow P_2$ are surjective submersions. 
A key result in the context of dual pairs is the correspondence of
symplectic leaves. Suppose that \eqref{pois} is a full dual pair and
that $\J_1$, $\J_2$ have connected fibers. Then there is a bijective
correspondence between the symplectic leaves 
of $P_1$ and those 
of $P_2$, given by
$\mathcal{L}_1\mapsto \J_2(\J_1^{-1}(\mathcal{L}_1))$, with inverse
$\mathcal{L}_2\mapsto \J_1(\J_2^{-1}(\mathcal{L}_2))$ \cite{We83}.
We refer to Chapter 11 in \cite{OrRa04} for further informations on dual pairs.

\paragraph{Two basic finite dimensional examples.} We now give two basic examples of dual pairs. We suppose for the moment that all the manifolds involved are finite dimensional.
\begin{itemize}
\item[(\bf i)] Let $G$ be a Lie group acting canonically (on the left) on a symplectic manifold $(M,\omega)$ and admitting a momentum map $\mathbf{J}:M\rightarrow\mathfrak{g}^*$.
This means that $\mathbf{d}\langle\mathbf{J},\xi\rangle=\mathbf{i}_{\xi_M}\omega$ for all 
$\xi\in \mathfrak{g}$, where $\xi_M$ denotes the infinitesimal generator.
The kernel and the image of the tangent map $T\mathbf{J}$ are characterized by the equalities
\begin{equation}\label{ker_range}
\operatorname{ker}T_m\mathbf{J}=\mathfrak{g}_M(m)^\omega\quad\text{and}
\quad\operatorname{range}T_m\mathbf{J}=\left(\mathfrak{g}_m\right)^\circ,
\end{equation}
where $\mathfrak{g}_M(m):=\{\xi_M(m)\mid \xi\in\mathfrak{g}\}$ and
$\left(\mathfrak{g}_m\right)^\circ=\{\mu\in\mathfrak{g}^* \mid
\langle\mu,\xi\rangle=0,\,\forall\xi\in\mathfrak{g}_m\}$ denotes the
annihilator in $\mathfrak{g}^*$ of the isotropy subalgebra
$\mathfrak{g}_m$ of $m$. This shows that, when the action is free,
$\mathbf{J}$ is a submersion onto an open subset of
$\mathfrak{g}^*$. As recalled in the Appendix, 
when the momentum map is equivariant, it is a Poisson map with respect to the symplectic form on $M$ 
and the $(+)$ Lie-Poisson structure
on $\mathfrak{g}^*$. If we suppose in addition
that $G$ acts freely and properly on $M$, then the quotient space
$M/G$ is a smooth manifold such that the projection
$\pi:M\rightarrow M/G$ is a smooth surjective submersion. This map
is Poisson with respect to the symplectic form on $M$ and the
induced quotient Poisson structure on $M/G$. Thus, using the
equality
\[
\operatorname{ker}T\mathbf{J}=\left(\mathfrak{g}_M\right)^\omega=(\operatorname{ker}T\pi)^\omega,
\]
we obtain the dual pair  (\cite{We83})
\begin{equation}\label{reduction}
M/G\stackrel{\pi}{\longleftarrow}(M,\om)\stackrel{\mathbf{J}}{\longrightarrow} \mathfrak{g}^*.
\end{equation}
By replacing $\mathfrak{g}^*$ by its open subset $\mathbf{J}(M)$, we get a full dual pair.

\item[(\bf ii)] The example we consider here is important since the dual pairs we shall study in this paper are  infinite dimensional versions of it.
We first treat the case of Lie algebra actions.

Let $\mathfrak{g}_1, \mathfrak{g}_2$ be two Lie algebras acting canonically on the symplectic manifold $(M,\omega)$ and admitting the infinitesimally equivariant momentum maps $\mathbf{J}_1$ and $\mathbf{J}_2$. Suppose that $\mathbf{J}_1$ is infinitesimally invariant under the action of $\mathfrak{g}_2$, that is $(\mathfrak{g}_2)_M\subset \operatorname{ker}(T\mathbf{J}_1)$. By \eqref{ker_range} this is equivalent to the assumption that $\mathbf{J}_2$ is infinitesimally invariant under the action of $\mathfrak{g}_1$. Note also that these conditions are equivalent to the statement $\omega(\xi_M,\eta_M)=0$, for all $\xi\in\mathfrak{g}_1, \eta\in\mathfrak{g}_2$, that is, $(\mathfrak{g}_1)_M\subset \left((\mathfrak{g}_2)_M\right)^\omega$. In this case, the Lie algebra actions commute since $\left[\xi_M,\eta_M\right]=- X_{\omega(\xi_M,\eta_M)}=0$, where, as usual, $X_h$ denotes the Hamiltonian vector field associated to the function $h$ on $M$.
In order to obtain a dual pair, we need to assume that $\mathfrak{g}_2$ acts transitively on the level set of $\mathbf{J}_1$, that is $(\mathfrak{g}_2)_M=\operatorname{ker}T\mathbf{J}_1$. 
By \eqref{ker_range}, this is equivalent to assume that $\mathfrak{g}_1$ acts transitively on the level set of $\mathbf{J}_2$. In this case
\[
\mathfrak{g}_1^*\stackrel{\mathbf{J}_1}{\longleftarrow}(M,\om)\stackrel{\mathbf{J}_2}{\longrightarrow} \mathfrak{g}^*_2
\]
form a dual pair.

We now suppose that the Lie algebra actions are associated to symplectic Lie group actions of $G_1$ and $G_2$. Suppose that $\mathbf{J}_1$ is $G_2$-invariant. Then we have the infinitesimal invariances $(\mathfrak{g}_1)_M\subset \operatorname{ker}T\mathbf{J}_2$ and $(\mathfrak{g}_2)_M\subset \operatorname{ker}T\mathbf{J}_1$. If $G_1$ is connected, then $\mathbf{J}_2$ is also $G_1$-invariant. If one of the actions is transitive on the corresponding level set, then the inclusions become equalities, and we get a dual pair.
Note that when $M$ is connected the other action is also transitive.

An example of this situation is provided by the case $M=T^*Q$, where the group actions are lifted from commuting actions on $Q$. In this case, it suffices to verify the transitivity hypothesis to obtain a dual pair.
\end{itemize}

\subsection*{Infinite dimensional case}

When trying to define a useful concept of dual pair on an infinite
dimensional manifold, one is faced to several difficulties. First of
all, the examples of Poisson manifolds we will consider are not
strictly speaking Poisson manifolds, the bracket being defined only
on a subalgebra of the smooth functions.
This is the case in general
for the cotangent bundle $T^*Q$ of a Fr\'echet manifold endowed with
the canonical symplectic form, where the associated bracket is
defined only on functions admitting a Hamiltonian vector field.
Therefore, in a possible definition of dual pair in infinite
dimensions, one can not expect to have Poisson maps in the usual
sense, but only in a \textit{formal} sense. Another difficulty, also
related to the weakness of the symplectic form, is that we have
inclusion $V\subset (V^\omega)^\omega$ and not equality in general,
for a subspace $V\subset T_mM$. Thus in a definition of dual pair on
an infinite dimensional manifold $M$, one has to consider Poisson
maps in a formal sense, and one has to replace
\eqref{finite_dimensional_dual_pair} by two equalities
\begin{equation}\label{strong_dual_pair}
(\ker T\J_1)^\om=\ker T\J_2,\quad (\ker T\J_2)^\om=\ker T\J_1.
\end{equation}
Such pairs of formal Poisson mappings will be called {\bfi formal dual pairs} or simply {\bfi dual pairs} in the context of infinite dimensional manifolds. Note that  if $M$ is finite dimensional these equalities are equivalent.

\medskip

We now introduce a weaker notion of dual pair, that naturally appears in the context of the ideal fluid and EPDiff equations when one wants to prove the stronger condition \eqref{strong_dual_pair}.
The pair of Poisson mappings (\ref{pois}) is called a {\bfi weak dual pair},
if $\ker T\J_1$ and $\ker T\J_2$
satisfy the inclusions
\begin{equation}\label{dual_pair}
(\ker T\J_1)^\om\subset\ker T\J_2,\quad (\ker T\J_2)^\om\subset\ker T\J_1.
\end{equation}
In finite dimensions these two inclusions are equivalent. 

\medskip

In order to be aware of the difficulties arising in the infinite
dimensional case, we reconsider the example ${\bf (i)}$ above. Let
$G$ be a connected Fr\'echet Lie group acting freely on the
symplectic Fr\'echet manifold $(M,\om)$ and admitting an equivariant
momentum map $\J:M\to\g^*$. Here and in
the following, $\mathfrak{g}^*$ denotes a topological vector space
in nondegenerate duality with the Fr\'echet Lie algebra
$\mathfrak{g}$. Standard examples for $\mathfrak{g}^*$ are the full
distributional dual or the regular dual. Note that the existence of
a momentum map $\mathbf{J}:M\to\g^*$ may depend on the chosen dual
$\mathfrak{g}^*$.

As in the finite dimensional
case, using the definition \eqref{def_momentum_map} of momentum maps, we have
\begin{equation}\label{kernel_of_TJ} \ker
T\J=(\mathfrak{g}_M)^\omega.
\end{equation}
Indeed, for a tangent vector $v_m\in T_mM$,
the assertions $T_m\J\cdot v_m=0$ and $\om(\xi_M(m),v_m)=0$ for all $\xi\in\g$
are equivalent. Assuming that the quotient space $M/G$ can be endowed with a smooth manifold structure such that the projection $\pi$ is a smooth map, we have $\mathfrak{g}_M=\operatorname{ker}(T\pi)$,
thus, using the equality \eqref{kernel_of_TJ}, we get
\begin{equation*}
(\ker T\pi)^\om= \ker T\J\quad\text{ and }\quad\ker T\pi\subset(\ker T\J)^\om.
\end{equation*}
Contrary to what we get in example ${\bf (i)}$ above, in infinite dimensions one can not even conclude
\eqref{reduction} is a weak dual pair, since the reversed inclusion requested in \eqref{dual_pair} may not hold.

\medskip
In the proposition below, we explore the dual pair properties for the infinite dimensional analogue of example ${\bf (ii)}$ above.

\begin{proposition}\label{zero}
Let $\J_1$ and $\J_2$ be two infinitesimally equivariant momentum maps for two symplectic Lie algebra actions of $\mathfrak{g}_1$ and $\mathfrak{g}_2$ on the symplectic manifold $(M,\om)$. Assume that $\left(\mathfrak{g}_2\right)_M\subset\ker T{\bf J}_1$ $($\ie $\J_1$
is infinitesimally $\g_2$-invariant$)$.

Then, as in the finite dimensional case, this is equivalent to $\left(\mathfrak{g}_1\right)_M\subset\ker T{\bf J}_2$
$($\ie $\J_2$ is infinitesimally $\g_1$-invariant$)$, and also to the statement $\om(\xi_M,\et_M)=0$, for all $\xi\in\g_1$ and $\et\in\g_2$. Moreover the pair of momentum maps
\begin{equation}\label{dp1}
\g_1^*\stackrel{\J_1}{\longleftarrow}(M,\om)\stackrel{\J_2}{\longrightarrow}\g_2^*
\end{equation}
is a  weak dual pair and the Lie algebra actions commute: $\left[\xi_M,\et_M\right]=0$, for all $\xi\in\g_1$ and $\et\in\g_2$.

If in addition we assume that the Lie algebra action of $\g_2$ is transitive on level sets of $\J_1$ and the Lie algebra action of $\g_1$ is transitive on level sets of $\J_2$, that is,
\begin{equation}\label{formal_dual_pair_condition}
\left(\mathfrak{g}_2\right)_M=\ker T{\bf J}_1
\quad\text{ and }\quad
\left(\mathfrak{g}_1\right)_M=\ker T{\bf J}_2,
\end{equation}
then \eqref{dp1} is a {dual pair}.
\end{proposition}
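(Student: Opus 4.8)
The plan is to reduce everything to the single identity \eqref{kernel_of_TJ}, which applies verbatim to each momentum map and gives $\ker T\J_i=\left((\g_i)_M\right)^\om$ for $i=1,2$, together with the elementary fact that the symplectic orthogonal $(\cdot)^\om$ reverses inclusions. First I would unfold the hypothesis: since $\ker T\J_1=\left((\g_1)_M\right)^\om$, the assumption $(\g_2)_M\subset\ker T\J_1$ reads $\om(\et_M,\xi_M)=0$ for all $\xi\in\g_1$ and $\et\in\g_2$. By antisymmetry of $\om$ this condition is visibly symmetric in the two Lie algebras, hence equivalent both to the stated vanishing $\om(\xi_M,\et_M)=0$ and to $(\g_1)_M\subset\left((\g_2)_M\right)^\om=\ker T\J_2$, which settles the first batch of equivalences exactly as in the finite-dimensional case.

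For the weak dual pair I would simply apply $(\cdot)^\om$ to the inclusion $(\g_2)_M\subset\ker T\J_1$. Because the orthogonal reverses inclusions, this yields $(\ker T\J_1)^\om\subset\left((\g_2)_M\right)^\om$, and the right-hand side equals $\ker T\J_2$ by \eqref{kernel_of_TJ}; the symmetric argument applied to $(\g_1)_M\subset\ker T\J_2$ gives the second inclusion of \eqref{dual_pair}. To see that the actions commute, I would use infinitesimal equivariance to write each generator as a Hamiltonian vector field, $\xi_M=X_{\langle\J_1,\xi\rangle}$ and $\et_M=X_{\langle\J_2,\et\rangle}$, and invoke the bracket formula $[\xi_M,\et_M]=-X_{\om(\xi_M,\et_M)}$ recalled above; since the function $\om(\xi_M,\et_M)$ vanishes identically, its Hamiltonian vector field is zero.

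Finally, under the transitivity hypothesis \eqref{formal_dual_pair_condition} the two inclusions of the weak dual pair become equalities essentially for free: substituting the set equality $(\g_2)_M=\ker T\J_1$ into $\ker T\J_2=\left((\g_2)_M\right)^\om$ gives $(\ker T\J_1)^\om=\ker T\J_2$ outright, and symmetrically $(\ker T\J_2)^\om=\ker T\J_1$. This is precisely \eqref{strong_dual_pair}.

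I expect there to be no genuine analytic obstacle here, and that is itself the point worth stressing. The characteristic infinite-dimensional pathology emphasized above, namely that one only has $V\subset(V^\om)^\om$ and not equality, never enters, because at no stage do I take a double orthogonal: transitivity is imposed as an honest equality of subspaces, and a single application of $(\cdot)^\om$ converts it directly into the required equality of kernels. Thus the entire content of the proposition is formal, and the transitivity conditions \eqref{formal_dual_pair_condition} are exactly the hypotheses that sidestep the double-annihilator problem which would otherwise block the passage from a weak dual pair to a dual pair.
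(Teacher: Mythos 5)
Your proof is correct, and its overall skeleton (reduce everything to \eqref{kernel_of_TJ}, use that $(\cdot)^\om$ reverses inclusions, use the bracket formula $[\xi_M,\et_M]=-X_{\om(\xi_M,\et_M)}$, and under \eqref{formal_dual_pair_condition} substitute the equalities directly) matches the paper's proof step for step. The one place where you genuinely diverge is the first equivalence: the paper applies $(\cdot)^\om$ to the hypothesis $(\g_2)_M\subset\ker T\J_1=\left((\g_1)_M\right)^\om$, obtaining $\left(((\g_1)_M)^\om\right)^\om\subset\left((\g_2)_M\right)^\om=\ker T\J_2$, and then invokes the inclusion $V\subset(V^\om)^\om$ to conclude $(\g_1)_M\subset\ker T\J_2$; you instead unfold the hypothesis into the statement $\om(\xi_M,\et_M)=0$ for all $\xi\in\g_1$, $\et\in\g_2$ and observe that antisymmetry of $\om$ makes this condition symmetric in the two algebras. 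Both are valid --- the inclusion $V\subset(V^\om)^\om$ is precisely the half of biduality that does hold for weak symplectic forms, so the paper's use of it is harmless --- but your route is marginally cleaner and makes your closing observation literally true of your own argument in a way it is not of the paper's: you never touch a double orthogonal at all, whereas the paper does (innocuously). One small imprecision: the fact that $\xi_M$ is Hamiltonian with Hamiltonian function $\langle\J_1,\xi\rangle$ follows from the defining property \eqref{def_momentum_map} of a momentum map, not from infinitesimal equivariance; this does not affect the argument.
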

\begin{proof} We first prove the equivalence between $\left(\mathfrak{g}_2\right)_M\subset\ker T{\bf J}_1$ and $\left(\mathfrak{g}_1\right)_M\subset\ker T{\bf J}_2$. Using \eqref{kernel_of_TJ}, the first inclusion leads to $\left(((\mathfrak{g}_1)_M)^\omega\right)^\omega\subset\left((\mathfrak{g}_2)_M\right)^\omega=\ker T{\bf J}_2$. Using $V\subset (V^\omega)^\omega$, we obtain the inclusion $\left(\mathfrak{g}_1\right)_M\subset\ker T{\bf J}_2$. This also shows that another equivalent condition is $\om(\xi_M,\et_M)=0$, for all $\xi\in\g_1$ and $\et\in\g_2$. The weak dual pair conditions \eqref{dual_pair} follow easily from these observations.

The function $-\om(\xi_M,\et_M)$ is the Hamiltonian function of the commutator $[\xi_M,\et_M]$ of infinitesimal generators for $\xi\in\g_1$ and $\et\in\g_2$. Since $\om(\xi_M,\et_M)=0$, one obtains commuting Lie algebra actions.

The dual pair property is seen upon writing $(\ker
T\J_1)^\om=((\g_2)_M)^\om=\ker T\J_2$ (and similarly for $(\ker
T\J_2)^\om=\ker T\J_1$)
\end{proof}

\begin{remark}[Comparison with the finite dimensional case]\rm Note that, as in the finite dimensional case, the infinitesimal invariance conditions $\left(\mathfrak{g}_2\right)_M\subset\ker T{\bf J}_1$ and $\left(\mathfrak{g}_1\right)_M\subset\ker T{\bf J}_2$ are equivalent. However, now the transitivity assumptions $\left(\mathfrak{g}_2\right)_M=\ker T{\bf J}_1$ and $\left(\mathfrak{g}_1\right)_M=\ker T{\bf J}_2$ are not equivalent and one needs to impose both of them to have a dual pair.
\end{remark}

\begin{remark}\label{g1g2}
{\rm The necessary and sufficient condition \eqref{formal_dual_pair_condition}
for dual pairs of momentum maps can be rewritten as 
\[
\left(\g_1\right)_M^\om=\left(\g_2\right)_M\text{ and }\left(\g_2\right)_M^\om=\left(\g_1\right)_M.
\]}
\end{remark}

The preceding proposition establishes sufficient conditions for weak dual pairs and dual pairs of momentum maps in terms of infinitesimal Lie algebra actions. However, the literature provides analogous results in terms of Lie group actions. In particular, one has the following result in infinite dimensions, as consequence of Proposition \ref{zero}

\begin{corollary}\label{cipolla}
Let $\J_1$ and $\J_2$ be equivariant momentum maps arising from the canonical actions of two Lie groups $G_1$ and $G_2$ on a symplectic manifold $(M,\om)$. Assume that $\J_1$ is $G_2$-invariant $($or vice-versa$)$,
then the pair of momentum maps
\begin{equation}\label{dp2}
\g_1^*\stackrel{\J_1}{\longleftarrow}(M,\om)\stackrel{\J_2}{\longrightarrow}\g_2^*
\end{equation}
is a  weak dual pair. 
Moreover, if the $G_2$ action is transitive on level sets of $\J_1$ and vice-versa, then \eqref{dp2} is a dual pair.
\end{corollary}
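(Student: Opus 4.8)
The plan is to reduce Corollary \ref{cipolla} to Proposition \ref{zero} by passing from the Lie group statements to the corresponding Lie algebra statements, since Proposition \ref{zero} already does all the symplectic-geometric work. The key observation is that the hypotheses and conclusions phrased in terms of group actions can be translated into infinitesimal statements about the induced Lie algebra actions, after which Proposition \ref{zero} applies verbatim.

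First I would recall that an equivariant momentum map for a Lie group action is in particular an infinitesimally equivariant momentum map for the induced Lie algebra action; this lets me regard $\J_1,\J_2$ as satisfying the standing hypotheses of Proposition \ref{zero}. The first real step is to show that the $G_2$-invariance of $\J_1$ implies the infinitesimal invariance $(\g_2)_M\subset\ker T\J_1$. This is the standard fact that if $\J_1$ is constant along $G_2$-orbits, then differentiating the relation $\J_1(\exp(t\eta)\cdot m)=\J_1(m)$ at $t=0$ gives $T_m\J_1\cdot \eta_M(m)=0$ for every $\eta\in\g_2$, \ie $(\g_2)_M\subset\ker T\J_1$. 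With this infinitesimal invariance in hand, Proposition \ref{zero} immediately yields that \eqref{dp2} is a weak dual pair, and moreover that the two actions commute infinitesimally. (The remark after Proposition \ref{zero} also gives the equivalent formulation, and the phrase ``or vice-versa'' is covered because Proposition \ref{zero} shows the two infinitesimal invariance conditions are equivalent.)

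For the second assertion, I would translate the group-level transitivity hypothesis into the infinitesimal transitivity \eqref{formal_dual_pair_condition}. Here the argument is: if $G_2$ acts transitively on a level set $\J_1^{-1}(\mu)$, then this level set is a single $G_2$-orbit, so its tangent space at each point $m$ equals the tangent space to the orbit, which is exactly $(\g_2)_M(m)$; on the other hand the tangent space to the (smooth) level set is $\ker T_m\J_1$. This gives $(\g_2)_M=\ker T\J_1$, and symmetrically $(\g_1)_M=\ker T\J_2$ from the vice-versa hypothesis. Feeding these two equalities into the last part of Proposition \ref{zero} produces the dual pair property \eqref{strong_dual_pair}.

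The main obstacle, and the point that deserves care rather than a routine appeal, is the passage from group-level transitivity to the equality of tangent spaces $T_m(\J_1^{-1}(\mu))=(\g_2)_M(m)$. In infinite dimensions one cannot invoke the finite-dimensional orbit theorem, and in general the tangent space to an orbit need not be closed nor equal to the full tangent space of the level set; one must know that the level set is a genuine submanifold whose tangent space is $\ker T_m\J_1$ and that the orbit map is a submersion onto it, so that the infinitesimal generators exhaust the tangent directions. I would therefore phrase the transitivity hypothesis as a statement about tangent spaces (which is how it is actually used) and note that this is precisely the content that must be verified by hand for the ideal fluid and EPDiff examples in the later sections; for the corollary itself it is taken as the hypothesis. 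The remaining verifications — infinitesimal equivariance inherited from equivariance, and the differentiation argument for invariance — are standard and I would only state them.
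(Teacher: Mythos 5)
Your proposal is correct and follows essentially the same route as the paper, which states Corollary \ref{cipolla} without a separate proof, precisely as a consequence of Proposition \ref{zero}: differentiate the group-level hypotheses (equivariance, $G_2$-invariance, transitivity on level sets) into the infinitesimal statements $(\g_2)_M\subset\ker T\J_1$ and \eqref{formal_dual_pair_condition}, then apply the proposition. Your cautionary remark about the passage from group transitivity to the tangent-space equality $(\g_2)_M=\ker T\J_1$ in infinite dimensions is well placed and consistent with how the paper itself uses the notion, since Proposition \ref{zero} already phrases ``transitive on level sets'' as exactly that equality.
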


An interesting case is provided by the situation below
\begin{corollary}
With the preceding hypothesis, if the groups $G_1$ and $G_2$ are connected, then their actions commute.
\end{corollary}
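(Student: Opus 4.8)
The plan is to upgrade the infinitesimal commutation already furnished by Proposition \ref{zero} to a statement about the full group actions, using connectedness. Denote the two canonical actions by $\Phi^1$ and $\Phi^2$, so that $\xi_M$ and $\et_M$ are the infinitesimal generators attached to $\xi\in\g_1$ and $\et\in\g_2$. By the hypothesis of Corollary \ref{cipolla} together with Proposition \ref{zero}, the Lie algebra actions commute, i.e. $[\xi_M,\et_M]=0$, equivalently $\mathcal{L}_{\xi_M}\et_M=0$, for all $\xi\in\g_1$ and $\et\in\g_2$. What must be shown is that $\Phi^1_{g_1}\circ\Phi^2_{g_2}=\Phi^2_{g_2}\circ\Phi^1_{g_1}$ for all $g_1\in G_1$, $g_2\in G_2$. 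The naive route --- commuting vector fields have commuting flows, and a connected group is generated by its one-parameter subgroups --- is unavailable here, because in the infinite dimensional groups of interest the exponential map need not be locally surjective. I would therefore replace ``generation by exponentials'' with path-connectedness throughout.

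First I would fix $\et\in\g_2$ and prove that $\et_M$ is $G_1$-invariant. Given $g_1\in G_1$, choose a smooth path $g(t)$ in $G_1$ with $g(0)=e$ and $g(1)=g_1$, and let $\delta(t)\in\g_1$ be its logarithmic derivative. The standard formula for differentiating a pullback along a moving diffeomorphism gives
\[
\frac{d}{dt}\big(\Phi^1_{g(t)}\big)^*\et_M=\big(\Phi^1_{g(t)}\big)^*\mathcal{L}_{\delta(t)_M}\et_M=0,
\]
the last equality by the infinitesimal commutation. Hence $\big(\Phi^1_{g(t)}\big)^*\et_M$ is constant in $t$ and equal to its value $\et_M$ at $t=0$; at $t=1$ this reads $\big(\Phi^1_{g_1}\big)^*\et_M=\et_M$. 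Since $g_1$ was arbitrary and $G_1$ is connected (hence path-connected), $\et_M$ is invariant under the whole action of $G_1$; as $\et$ was arbitrary, this holds for every $\et\in\g_2$.

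It remains to promote this to commutativity of the group actions, and here I would run a second path argument, now in $G_2$, which is the step I expect to be the technical heart. Fix $g_1\in G_1$ and $g_2\in G_2$, pick a smooth path $g_2(t)$ from $e$ to $g_2$ with logarithmic derivative $\et(t)\in\g_2$, so that the family $t\mapsto\Phi^2_{g_2(t)}$ is the evolution operator of the time-dependent vector field $\et(t)_M$. Consider the conjugated family $F(t):=\big(\Phi^1_{g_1}\big)^{-1}\circ\Phi^2_{g_2(t)}\circ\Phi^1_{g_1}$, with $F(0)=\mathrm{id}$. A direct computation shows that $F(t)$ is the evolution operator of the pulled-back field $\big(\Phi^1_{g_1}\big)^*\et(t)_M$, which by the previous paragraph equals $\et(t)_M$. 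Thus $F(t)$ and $\Phi^2_{g_2(t)}$ solve the same evolution equation with the same initial value, so by uniqueness they coincide; evaluating at $t=1$ gives $\big(\Phi^1_{g_1}\big)^{-1}\Phi^2_{g_2}\Phi^1_{g_1}=\Phi^2_{g_2}$, which is the desired commutativity. The main obstacle lies entirely in the infinite dimensional bookkeeping: one must work in a category of regular Lie groups so that smooth curves admit logarithmic derivatives in the Lie algebra, the pullback-derivative identity holds, and a time-dependent vector field determines a unique evolution operator for a given initial condition --- it is precisely this regularity that lets path-connectedness stand in for generation by the non-surjective exponential map.
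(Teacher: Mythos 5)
Your proposal is correct and follows essentially the same route as the paper: Proposition \ref{zero} yields the infinitesimal commutation $[\xi_M,\eta_M]=0$, and connectedness then upgrades this to commutation of the group actions --- the paper asserts this last step in a single sentence, and your two path arguments (invariance of $\eta_M$ along a path in $G_1$, then the evolution-operator uniqueness argument along a path in $G_2$) are precisely the details behind that assertion. Your insistence on path-connectedness and regular Lie groups in place of generation by exponentials is the right precaution here, since for the diffeomorphism groups of interest the exponential map is indeed not locally surjective.
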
 
\begin{proof}
Since the $G_1$-invariance of $\J_2$ implies $(\g_1)_M\subset\ker T\J_2$, the corresponding Lie algebra actions commute by Proposition \ref{zero}. Since $G_1$ and $G_2$ are connected, their actions also commute.
\end{proof}

\medskip

Also, when $M=T^*Q$ is a cotangent bundle with canonical symplectic form, we obtain the following result on commuting actions

\begin{corollary}\label{finocchio}
Given commuting actions of two Lie groups $G_1$ and $G_2$ on a manifold $Q$, and the lift of these actions
to its cotangent bundle $T^*Q$,
the pair of cotangent momentum maps
\begin{equation}\label{dp3}
\g_1^*\stackrel{\J_1}{\longleftarrow}T^*Q\stackrel{\J_2}{\longrightarrow}\g_2^*
\end{equation}
is a weak dual pair.
\end{corollary}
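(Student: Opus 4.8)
The plan is to derive this from Corollary \ref{cipolla}, so the task reduces to verifying its hypotheses for the two cotangent-lifted actions on $M=T^*Q$. Two ingredients are needed: that each lifted action is canonical and carries an equivariant momentum map, and that one of the two momentum maps is invariant under the other group. The first is standard and I would only recall it: the cotangent lift of any diffeomorphism preserves the canonical symplectic form, so the lifted $G_i$-actions are canonical, and the lift of the $G_i$-action on $Q$ admits the momentum map
\begin{equation*}
\langle\J_i(\al_q),\xi\rangle=\langle\al_q,\xi_Q(q)\rangle,\qquad \al_q\in T_q^*Q,\ \xi\in\g_i,
\end{equation*}
where $\xi_Q$ denotes the infinitesimal generator of the $G_i$-action on $Q$. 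These cotangent momentum maps are equivariant, hence formal Poisson maps into the Lie--Poisson duals $\g_i^*$, as recalled earlier. Thus the whole content of the statement lies in establishing the invariance.

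The key step is to show that $\J_1$ is $G_2$-invariant. Write $\Phi^2_g$ for the action of $g\in G_2$ on $Q$ and $\widehat{\Phi}^2_g$ for its cotangent lift, which acts by pullback and covers $\Phi^2_g$. Since the two actions on $Q$ commute, conjugating the flow $\Phi^1_{\exp(t\xi)}$ of a generator $\xi_Q$, $\xi\in\g_1$, by $\Phi^2_g$ leaves it unchanged; differentiating at $t=0$ shows that $\xi_Q$ is $G_2$-invariant as a vector field, $(\Phi^2_g)_*\xi_Q=\xi_Q$. Substituting the pullback formula for $\widehat{\Phi}^2_g$ into the expression above for $\J_1$ and using this invariance yields $\langle\J_1(\widehat{\Phi}^2_g(\al_q)),\xi\rangle=\langle\al_q,\xi_Q(q)\rangle=\langle\J_1(\al_q),\xi\rangle$, that is $\J_1\circ\widehat{\Phi}^2_g=\J_1$. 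Equivalently, at the infinitesimal level one checks directly that $\om(\xi_{T^*Q},\et_{T^*Q})=\langle\al_q,[\et_Q,\xi_Q]\rangle=0$ for $\xi\in\g_1$, $\et\in\g_2$, using that commuting actions force $[\xi_Q,\et_Q]=0$ on $Q$; this is precisely the condition $\om(\xi_M,\et_M)=0$ entering Proposition \ref{zero}.

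With the equivariant momentum maps $\J_1,\J_2$ and the $G_2$-invariance of $\J_1$ in hand, Corollary \ref{cipolla} applies directly and gives that \eqref{dp3} is a weak dual pair. The geometric content is simply that commutativity of the two actions on $Q$ transfers, via the cotangent lift, into invariance of one momentum map under the other group. The point requiring genuine care is the infinite-dimensional (Fr\'echet) setting: one must ensure that the cotangent-lifted momentum maps are honest momentum maps in the formal sense adopted here, \ie that the relevant Hamiltonians $\langle\J_i,\xi\rangle$ admit Hamiltonian vector fields on $T^*Q$, and that the chosen dual $\g_i^*$ actually supports them; once these formal prerequisites are granted, the computation above is elementary and the invariance follows exactly as in finite dimensions.
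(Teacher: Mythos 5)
Your proposal is correct and follows essentially the same route as the paper: recall that the cotangent-lifted actions admit the standard equivariant momentum maps $\langle\J_i(\al_q),\xi\rangle=\langle\al_q,\xi_Q(q)\rangle$, observe that commutativity of the actions on $Q$ makes the generators $\xi_Q$ of the $G_1$-action $G_2$-invariant, conclude that $\J_1$ is $G_2$-invariant, and invoke Corollary \ref{cipolla}. The only difference is that you spell out the invariance computation (and its infinitesimal version) which the paper states in one line.
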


\begin{proof}
If $M=T^*Q$ is a cotangent bundle on which $G$ acts by the cotangent lift of its action on $Q$, then
\begin{equation}\label{cot_momap}
\mathbf{J}:T^*Q\rightarrow\mathfrak{g}^*,\quad \langle\mathbf{J}(\alpha_q),\xi\rangle=\langle\alpha_q,\xi_Q(q)\rangle
\end{equation}
is an equivariant momentum map \cite{MaRa99}. If $\xi_Q$ is an infinitesimal generator of the $G_1$ action, then $\xi_Q$ is $G_2$-equivariant, since the actions commute. Thus, the cotangent momentum map $\mathbf{J}_1$, associated to the cotangent lifted action of $G_1$, is $G_2$-invariant. The weak dual pair property follows by Corollary \ref{cipolla}.
\end{proof}

\begin{remark}[$(\pm)$ Lie-Poisson brackets]{\rm
In the above arguments, one has to consider that, if one of the
two momentum maps arises from a \textit{right} (resp. \textit{left}) action, then the
corresponding Lie-Poisson bracket carries the \textit{minus} (resp. \textit{plus}) sign.}
\end{remark}


\section{Momentum maps and central extensions}\label{trois}

In this section we present a reformulation of the ideal fluid ``dual pair" \cite{MaWe83}
that allows us to show in a rigorous way the momentum map properties of the two legs.
Given a symplectic manifold $(M,\omega)$ and a compact $k$-dimensional volume manifold $(S,\mu)$, we first show that the Fr\'echet manifold $\F(S,M)$ is in a natural way a symplectic manifold. It is clear that the subgroups $\operatorname{Diff}(M,\omega)$ and $\operatorname{Diff}(S,\mu)$  act on $\F(S,M)$ in a symplectic way. However, in order to show that these actions admit momentum mappings, 
it is necessary to restrict the actions to the groups $\operatorname{Diff}_{ham}(M,\omega)$ and $\operatorname{Diff}_{ex}(S,\mu)$ of Hamiltonian and exact volume preserving diffeomorphisms, respectively. In order to write down a momentum map in a consistent way,
one has to choose particular Hamiltonian functions on $M$ and potential forms on $S$,
but these choices introduce nonequivariance cocycles.
We solve this problem as usual by passing to the associated central extensions of the corresponding Lie algebras. This approach leads naturally to the group of quantomorphisms, a central extension of $\operatorname{Diff}_{ham}(M,\omega)$, and to the Ismagilov central extension of $\operatorname{Diff}_{ex}(S,\mu)$.

Throughout the Section we will make use of some important but standard facts regarding momentum maps and nonequivariance cocycles. For convenience of the reader and to fix our conventions, these facts are recalled in \S\ref{momap_noneq} of the Appendix.

\paragraph{The symplectic form on $\F(S,M)$.}
Let $(M,\om)$ be a symplectic manifold and $S$ a compact $k$--dimensional manifold with a fixed volume form $\mu$. The set $\mathcal{F}(S,M)$ of smooth functions from
$S$ to $M$ can be endowed with the structure of a Fr\'echet manifold in a natural way, see \cite{KrMi97}. We will denote by $U_f, V_f$ the tangent vectors to $\mathcal{F}(S,M)$ at $f$. These tangent vectors are identified with vector fields on $M$ along $f$, \ie sections of the pull-back vector bundle $f^*TM\rightarrow S$.

We now show that the $2$-form $\bar\omega$ on $\mathcal{F}(S,M)$ defined by
\[
\bar\om_f(U_f,V_f)=\int_S\om(U_f,V_f)\mu,
\]
is a symplectic form. The map $\omega\rightarrow\bar\omega$ which associates a $k$-form on $\mathcal{F}(S,M)$ to a $k$-form on $M$ will play an important role in the paper and will be referred to as the \textit{bar map}. Its properties are summarized in the Appendix of the paper. In particular, from Proposition \ref{cor2} we have $\mathbf{d}\bar\omega=\overline{\mathbf{d}\omega}=0$, so $\bar\omega$ is closed.
Note that this property cannot be deduced from a straightforward computation, since vectors $U_f$ in $T_f\mathcal{F}(S,M)$ are not necessarily of the form $U_f=u\circ f$, where $u$ is a vector field on $M$.

The (weakly) non-degeneracy of $\bar\om$ can be verified as follows.
Let $U_f$ be a non-zero vector field on $M$ along $S$, so $U_f(x)\ne 0$ for some $x\in S$. Because $\om$ is non-degenerate, one can find another vector field $V_f$ along $f$ such that $\om(U_f,V_f)$ is a bump function on $S$. Then $\bar\om(U_f,V_f)=\int_S\om(U_f,V_f)\mu\ne 0$, so $U_f$ does not belong to the kernel of $\bar\om$. This proves that the kernel of $\bar\om$ is trivial.


\subsection*{The left momentum map}

Consider the action of the group $\Diff(M,\om)=\{\varphi\in\operatorname{Diff}(M)\mid\varphi^*\omega=\omega\}$ of \textit{symplectic diffeomorphisms} of $M$ on the Fr\'echet manifold $\F(S,M)$ by composition on the left. A direct computation 
shows that this action preserves the symplectic form $\bar\om$ on $\mathcal{F}(S,M)$. In order to show the existence of a momentum map, we need to restrict this action to the subgroup
$\operatorname{Diff}_{ham}(M,\om)\subset\Diff(M,\om)$ of
\textit{Hamiltonian diffeomorphisms}. Recall that $\operatorname{Diff}_{ham}(M,\om)$ consists of symplectic diffeomorphisms which are
endpoints of Hamiltonian isotopies, i.e. isotopies defined by time dependent Hamiltonian vector fields. The Lie algebra of this
group consists of Hamiltonian vector fields
$\mathfrak{X}_{ham}(M,\om)=\{X_h\mid
h\in\mathcal{F}(M)\}$. When the first cohomology group of
$M$ vanishes, then $\operatorname{Diff}_{ham}(M,\om)$ coincides with the
identity component of $\Diff(M,\om)$. We refer to \S43.13 in
\cite{KrMi97} for further informations. The infinitesimal
generator associated to a Lie algebra element $X_h\in\mathfrak{X}_{ham}(M,\omega)$ is 
\[
\bar X_h(f)=X_h\circ f. 
\]

\begin{lemma}
The infinitesimal generator $\bar X_h$ is a Hamiltonian vector field on $\F(S,M)$ relative to the symplectic form $\bar\omega$ with the Hamiltonian function $\bar h$ given by
\[
\bar h(f):=\int_S (h\circ f)\mu,\quad f\in\F(S,M).
\]
\end{lemma}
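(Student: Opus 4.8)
The plan is to verify directly the defining relation of a Hamiltonian vector field, namely $\mathbf{i}_{\bar X_h}\bar\omega=\mathbf{d}\bar h$, with the sign convention fixed by the momentum map identity $\mathbf{d}\langle\mathbf{J},\xi\rangle=\mathbf{i}_{\xi_M}\omega$ used throughout the paper (so that on $M$ the Hamiltonian vector field $X_h$ of $h$ satisfies $\mathbf{i}_{X_h}\omega=\mathbf{d}h$). Since both sides are one-forms on $\F(S,M)$, it suffices to evaluate them on an arbitrary tangent vector $V_f\in T_f\F(S,M)$, that is, on an arbitrary section of the pull-back bundle $f^*TM$.

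First I would compute the left-hand side. Choosing a curve $f_t$ in $\F(S,M)$ with $f_0=f$ and $\frac{d}{dt}\big|_{t=0}f_t=V_f$, differentiation under the integral sign, which is justified because $S$ is compact and the integrand is smooth in $(t,x)$, gives
\[
\mathbf{d}\bar h(f)\cdot V_f=\frac{d}{dt}\Big|_{t=0}\int_S (h\circ f_t)\,\mu=\int_S \big(\mathbf{d}h(f(x))\cdot V_f(x)\big)\,\mu(x).
\]

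Next I would expand the right-hand side from the definition of $\bar\om$, using $\bar X_h(f)=X_h\circ f$:
\[
(\mathbf{i}_{\bar X_h}\bar\omega)(f)\cdot V_f=\bar\om_f\big(\bar X_h(f),V_f\big)=\int_S \om_{f(x)}\big(X_h(f(x)),V_f(x)\big)\,\mu(x).
\]
The key step is then the pointwise identity on $S$: for each $x$, since $X_h$ is a genuine Hamiltonian vector field on $M$, one has $\om_{f(x)}(X_h(f(x)),V_f(x))=(\mathbf{i}_{X_h}\om)_{f(x)}(V_f(x))=\mathbf{d}h(f(x))\cdot V_f(x)$. Substituting this fibrewise relation into the integral shows the two expressions agree for every $V_f$, which establishes $\mathbf{i}_{\bar X_h}\bar\om=\mathbf{d}\bar h$ and hence the lemma.

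I do not expect a serious obstacle here, as the computation is essentially pointwise in $S$; the only care points are the differentiation under the integral (supplied by compactness of $S$) and keeping the sign convention consistent with that of the momentum map. The one structural remark worth stressing is that, although a general tangent vector $V_f$ need not be of the form $u\circ f$ for a vector field $u$ on $M$ (the very subtlety noted for the closedness of $\bar\om$), this causes no difficulty here: the identity is local in $S$ and requires only the first slot $\bar X_h(f)=X_h\circ f$ to arise from an honest Hamiltonian vector field on $M$, so the relation $\mathbf{i}_{X_h}\om=\mathbf{d}h$ may be applied at each $f(x)$ against the arbitrary vector $V_f(x)$. Alternatively, the same conclusion follows more formally from the naturality of the bar map in Proposition \ref{cor2}, which yields $\mathbf{d}\bar h=\overline{\mathbf{d}h}=\overline{\mathbf{i}_{X_h}\om}$, provided one invokes the compatibility of the bar map with interior multiplication along infinitesimal generators; the direct verification above avoids relying on this.
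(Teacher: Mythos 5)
Your proof is correct, but it takes a different route from the paper. The paper's proof is a one-line application of the bar-map calculus from the Appendix: from $\mathbf{i}_{X_h}\om=\dd h$ it deduces $\mathbf{i}_{\bar X_h}\bar\om=\overline{\mathbf{i}_{X_h}\om}=\overline{\dd h}=\dd\bar h$, invoking exactly the two properties of Proposition \ref{cor2} (compatibility of the bar map with interior multiplication by $\bar X$, and with the exterior derivative) that you mention only as an aside at the end. Your argument instead verifies the identity $\mathbf{i}_{\bar X_h}\bar\om=\dd\bar h$ by hand, evaluating both one-forms on an arbitrary tangent vector $V_f$ and reducing everything to the pointwise relation $\om_{f(x)}\bigl(X_h(f(x)),V_f(x)\bigr)=\dd h(f(x))\cdot V_f(x)$ on $M$. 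In effect you re-prove, for this special case, the two bar-map identities the paper quotes. What your version buys is self-containedness and transparency: it makes explicit why the subtlety that a general $V_f$ need not be of the form $u\circ f$ (the very issue the paper flags for the closedness of $\bar\om$) is harmless here, since the computation is pointwise in $S$ and only the first slot must come from a genuine vector field on $M$. What the paper's version buys is brevity and uniformity: the same hat/bar calculus is the engine for all the harder computations later (notably Lemma \ref{hatal} for the right leg, where the hat pairing with a potential form $\al$ is genuinely needed and a bare-hands computation would be much less pleasant), so establishing and then systematically citing Proposition \ref{cor2} amortizes the work across the whole paper.
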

\begin{proof} 
{From $\mathbf{i}_{X_h}\om=\dd h$ it follows that
\begin{gather*}
\mathbf{i}_{\bar X_h}\bar\om=\overline{\mathbf{i}_{X_h}\om}=\overline{\dd h}=\dd\bar h,
\end{gather*}
from the properties of the bar map in the Appendix.}
\end{proof}

\medskip

Therefore, one could define a momentum map $\mathbf{J}_L$ by the formula $\langle\mathbf{J}_L(f),X_h\rangle=\bar h(f)$.
However, the expression of $\mathbf{J}_L$ is not well-defined since the right hand side depends on a particular choice of the Hamiltonian function associated to $X_h$.

We can solve this problem by fixing one point $m_i$ in each connected component of $M$, 
and consider the unique Hamiltonian function $h_0$ of $X_h$ vanishing at these points.
The corresponding momentum map is
\begin{equation}\label{leftj}
\mathbf{J}_L:\mathcal{F}(S,M)\to \X_{ham}(M,\om)^*,\quad
\langle\mathbf{J}_L(f),X_h\rangle=\bar h_0(f)=\int_S(h_0\o f)\mu,
\end{equation}
with values in the distributional dual $\X_{ham}(M,\om)^*$.

A new problem appears here since this momentum map is not equivariant and, therefore, is not a Poisson map.
Using formula \eqref{sig} of the Appendix, we compute the $H^0(\F(S,M))$-valued Lie algebra 2--cocycle on $\X_{ham}(M,\om)$ measuring the nonequivariance of $\J_L$. For all $f\in\F(S,M)$, we have
\begin{align}\label{cocycle_large}
\si(X,Y)(f)&=-\langle\mathbf{J}_L(f),[X,Y]\rangle
-\bar\om\left(\bar X(f),\bar Y(f)\right)\nonumber\\
&=\overline{\om(X,Y)_0}(f)-\bar\om\left(\bar X,\bar Y\right)(f)
=\overline{\om(X,Y)_0-\om(X,Y)}(f),
\end{align}
where we used the properties of the bar map and the formula $[X,Y]=-X_{\omega(X,Y)}$ valid for any $X,Y\in\mathfrak{X}_{ham}(M,\omega)$.
Proposition \ref{ue} ensures that the Lie algebra action of the central extension $\widehat{\X_{ham}(M,\om)}$ of 
$\X_{ham}(M,\om)$ by $H^0(\F(S,M))$,
with characteristic cocycle $\si$, admits the infinitesimally equivariant momentum map 
\begin{equation*}
\hat\J_L:\F(S,M)\to\widehat{\X_{ham}(M,\om)}^*,\quad\hat{\mathbf{J}}_L(f)=(\mathbf{J}_L(f),-[f]),
\end{equation*}
where $[f]\in H_0(\F(S,M))$ denotes the connected component of $f$.

{We now use Proposition \ref{te} to write an infinitesimally equivariant momentum map associated to a more natural central extension of  $\X_{ham}(M,\om)$, namely, an extension by $H^0(M)$. In order to do this, we observe that the cocycle $\si$ can be written in the form $\si=T\o\si_T$,
where $\si_T$ is the $H^0(M)$-valued cocycle:
\begin{equation}\label{cocycle_T}
\si_T(X,Y)=\om(X,Y)_0-\om(X,Y),
\end{equation}
and $T:H^0(M)\to H^0(\F(S,M))$ is the linear map induced by the bar map.}
This is seen from the computation
\begin{equation*}
\langle T(\si_T(X,Y)),[f]\rangle=\overline{\si_T(X,Y)}(f)=\overline{\om(X,Y)_0-\om(X,Y)}(f)=\langle\si(X,Y),[f]\rangle.
\end{equation*}
Following the notations of Proposition \ref{te}, we denote by $\widehat{\X_{ham}(M,\omega)}_T$ the central extension of $\X_{ham}(M,\omega )$ by $H^0 (M) $ defined by $\si_T$.

{We now show that $\widehat{\X_{ham}(M,\om)}_T$ is isomorphic to a well known extension of the Lie algebra of Hamiltonian vector fields, namely, to the Lie algebra $(\F(M),\{\ ,\ \})$ of smooth functions on $M$ with canonical Poisson bracket. A section of the central extension
\begin{equation}\label{fm}
0\to H^0(M)\to\F(M)\to\X_{ham}(M,\om)\to 0
\end{equation}  
is obtained} by assigning to $X_h$ the unique Hamiltonian function $h_0$ of $X_h$ vanishing at all points $m_i$.
{The} characteristic cocycle computed with this section is exactly $\si_T$. Written in the basis $[m_i]^*$ of $H^0(M)$, 
dual to the basis $[m_i]$ of $H_0(M; \mathbb{R}  )$, the characteristic cocycle becomes 
\begin{equation}\label{cocycle_m_p}
\si_T(X,Y)=(-\om(X,Y)(m_1),\dots,-\om(X,Y)(m_p)).
\end{equation}
For connected $M$ we have $\si_T(X,Y)=-\om(X,Y)(m)$ for arbitrary $m\in M$.

The Lie algebra isomorphism between the central extension $\widehat{\X_{ham}(M,\om)}_T$
defined by $\si_T$ and $\F(M)$ is
\begin{equation}\label{LB_isom}
h\in \mathcal{F}(M)\mapsto (X_h,h_0-h)\in \widehat{\mathfrak{X}_{ham}(M,\omega)}_T,
\end{equation}
where $h_0-h\in H^0(M)=\ker(\dd:\F(M)\to\Om^1(M))$. 

We can apply Proposition \ref{te} to get an infinitesimally equivariant momentum map for the central extension
$\widehat{\X_{ham}(M,\om)}_T$ of $\X_{ham}(M,\om)$ by $H^0(M)$ defined by $\si_T$
\begin{equation}\label{jt}
\hat\J_L^T:\F(S,M)\to\widehat{\X_{ham}(M,\om)}^*_T,\quad\hat\J_L^T(f)=(\J_L(f),-T^*([f])),
\end{equation}
where $T^*:H_0(\F(S,M))\to H_0(M)$ denotes the dual of $T$.
The infinitesimally equivariant momentum map \eqref{jt},
transferred to $\F(M)$ with the help of the Lie algebra isomorphism \eqref{LB_isom},
takes the simple form:
\begin{align*}
\left\langle\hat{\mathbf{J}}_L(f),h\right\rangle
&=\left\langle\hat{\mathbf{J}}^T_L(f),(X_h,h_0-h)\right\rangle
=\left\langle{\mathbf{J}}_L(f),X_h\right\rangle-\left\langle T^*([f]),{h_0-h}\right\rangle\\
&=\int_S (h_0\circ f)\mu-\overline{(h_0-h)}(f)=\int_S(h\circ f)\mu.
\end{align*}

\medskip

When the cohomology class of the symplectic form $\omega$ is integral, i.e. $[\om]\in H^2(M,\ZZ)$, then the Lie algebra central extension
$\widehat{\X_{ham}(M,\om)}\simeq\F(M)$ can be integrated to a group central extension, namely
the group of \textit{quantomorphisms} 
as a central extension 
of the group of Hamiltonian diffeomorphisms, see \cite{Ko70} \cite{So70}. 
More precisely, in this case there exists a principal $S^1$-bundle $\pi:P\rightarrow M$ and a principal connection $\theta$ on $P$ 
with curvature $\om$, \ie $\mathbf{d}\theta=\pi^*\omega$. The group of quantomorphisms $\operatorname{Quant}(P,\theta)$ is defined as the group of connection preserving automorphisms of $P$.
This group acts on the left on $\mathcal{F}(S,M)$ by the action of $\operatorname{Diff}_{ham}(M)$, that is $\psi\cdot f=\varphi\circ f$, where $\varphi$ is the Hamiltonian diffeomorphism on the base induced by the quantomorphism $\psi$.

In general, this group extension cannot be described by a global group 2--cocycle on $\operatorname{Diff}_{ham}(M,\omega)$, i.e., the central extension is not diffeomorphic to a direct product. 
However, in the particular case when $\om$ is an exact symplectic form, such a group 2--cocycle on $\Diff_{ham}(M,\om)$ exists and is presented in \cite{IsLoMi06}.
The properties of the left momentum map obtained so far are summarized in the following theorem.

\begin{theorem} 
Let $S$ be a compact manifold endowed with a volume form $\mu$ and let $(M,\omega)$ be a symplectic manifold. Then
\begin{equation}\label{hat_momap_left}
\hat{\mathbf{J}}_L:\mathcal{F}(S,M)\rightarrow\mathcal{F}(M)^*,\quad \left\langle\hat{\mathbf{J}}_L(f),h\right\rangle
=\bar h(f)=\int_S(h\circ f)\mu
\end{equation}
is an infinitesimally equivariant momentum map for the Hamiltonian left Lie algebra action of $\mathcal{F}(M)$, shortly $\hat\J_L(f)=f_*\mu$.

When $[\omega]\in H^2(M,\mathbb{Z})$, then this Lie algebra action can be integrated to a left Hamiltonian Lie group action of $\operatorname{Quant}(P,\theta)$ on $\mathcal{F}(S,M)$, with equivariant momentum map $\hat{\mathbf{J}}_L$.
\end{theorem}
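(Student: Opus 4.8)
The plan is to obtain the first assertion directly from the construction carried out just above the statement, and then to integrate the infinitesimal action using Kostant--Souriau prequantization. For the infinitesimal equivariance I would invoke Proposition \ref{te}, which already guarantees that the momentum map $\hat\J_L^T$ of \eqref{jt} for the central extension $\widehat{\X_{ham}(M,\om)}_T$ is infinitesimally equivariant. Since infinitesimal equivariance is a purely Lie-algebraic property, it is preserved when we transport $\hat\J_L^T$ along the Lie algebra isomorphism \eqref{LB_isom} between $\widehat{\X_{ham}(M,\om)}_T$ and $(\F(M),\{\ ,\ \})$. The displayed computation preceding the theorem shows that this transported map is precisely $\hat\J_L$ with $\langle\hat\J_L(f),h\rangle=\int_S(h\o f)\mu$, and the preceding Lemma shows that $\bar X_h$ is the Hamiltonian vector field of $f\mapsto\bar h(f)$. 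Hence $\hat\J_L$ is an infinitesimally equivariant momentum map for the left Hamiltonian action $h\mapsto\bar X_h$ of $\F(M)$.

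The shorthand $\hat\J_L(f)=f_*\mu$ is justified by reading the pairing as integration against the pushforward measure, $\langle f_*\mu,h\rangle=\int_M h\,\dd(f_*\mu)=\int_S(h\o f)\mu$. This reformulation is the key tool for the group-level equivariance, because pushforward of measures is functorial in $f$.

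For the second assertion I would first recall that when $[\om]\in H^2(M,\ZZ)$ the Kostant--Souriau construction furnishes a principal $S^1$-bundle $\pi\colon P\to M$ with connection $\theta$ of curvature $\om$, and that $\operatorname{Quant}(P,\theta)$ is a central extension of $\Diff_{ham}(M,\om)$ integrating the Lie algebra central extension $\F(M)$ of $\X_{ham}(M,\om)$. The action on $\F(S,M)$ is the one already described, $\ps\cdot f=\ph\o f$, where $\ph\in\Diff_{ham}(M,\om)$ is the base diffeomorphism of $\ps$; this is a smooth left action since it factors through left composition by $\Diff_{ham}(M,\om)$. I would then verify that the infinitesimal generator of this group action attached to $h\in\F(M)=\operatorname{Lie}(\operatorname{Quant}(P,\theta))$ is $\frac{d}{dt}\big|_{t=0}(\ph_t\o f)=X_h\o f=\bar X_h(f)$, where $\ph_t$ is the Hamiltonian flow of $X_h$, so that the induced Lie algebra action recovers exactly the one of the first part, the central circle acting trivially on $\F(S,M)$.

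It remains to establish global equivariance of $\hat\J_L$, which is the genuinely new content and the point requiring care. Writing $\operatorname{Ad}_\ps h=h\o\ph^{-1}$ for the adjoint action of $\operatorname{Quant}(P,\theta)$ on $\F(M)$ (pushforward of functions by the base symplectomorphism, a Poisson automorphism), the induced coadjoint action on $\F(M)^*$ satisfies $\langle\operatorname{Ad}^*_{\ps^{-1}}\nu,h\rangle=\langle\nu,h\o\ph\rangle$. Functoriality of pushforward then gives $\hat\J_L(\ps\cdot f)=(\ph\o f)_*\mu=\ph_*(f_*\mu)$, and pairing with $h$ yields $\int_S(h\o\ph\o f)\mu=\langle\operatorname{Ad}^*_{\ps^{-1}}\hat\J_L(f),h\rangle$, which is equivariance. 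The main obstacle to monitor is the bookkeeping of the central direction: in \eqref{LB_isom} the element $h$ carries a component $h_0-h\in H^0(M)$, yet $\bar X_h$ depends only on $X_h$; consistency is guaranteed because the center of $\operatorname{Quant}(P,\theta)$ acts trivially on $\F(S,M)$, so no central contribution is expected and the equivariance identity above never invokes one.
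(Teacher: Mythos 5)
Your proposal is correct and follows essentially the same route as the paper: the first assertion is obtained exactly as in the text preceding the theorem (Proposition \ref{te} plus transport along the isomorphism \eqref{LB_isom}), and the group-level equivariance is verified by the same direct computation, using that the adjoint action of $\operatorname{Quant}(P,\theta)$ on its Lie algebra $\mathcal{F}(M)$ is $\operatorname{Ad}_{\psi^{-1}}h=h\circ\varphi$, which is precisely the paper's proof. Your additional remarks (the pushforward reading $\hat{\mathbf{J}}_L(f)=f_*\mu$, the identification of the infinitesimal generator, and the triviality of the central direction) are sound elaborations of points the paper leaves implicit.
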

\textbf{Proof.} From the discussion above, it suffices to check the equivariance of $\hat{\mathbf{J}}_L$ under the action of the quantomorphism group. Given $\psi\in\operatorname{Quant}(P,\theta)$, covering $\varphi\in\operatorname{Diff}_{ham}(M)$, we have
\[
\left\langle\hat{\mathbf{J}}_L(\psi\cdot f),h\right\rangle=\left\langle\hat{\mathbf{J}}_L(\varphi\circ f),h\right\rangle=\int_S(h\circ \varphi\circ f)\mu=\left\langle\hat{\mathbf{J}}_L(f),h\circ \varphi \right\rangle=\left\langle\operatorname{Ad}^*_{\psi^{-1}}\hat{\mathbf{J}}_L(f),h\right\rangle,
\]
since the adjoint action of the quantomorphism group on $\mathcal{F}(M)$ reads $\operatorname{Ad}_{\psi^{-1}} h=h\circ\varphi$.

\begin{remark}\label{compsupp}{\rm
An equivariant momentum map can be written for non-compact $M$
without passing to central Lie algebra extensions, after restricting to a Lie subalgebra of $\X_{ham}(M,\om)$.
Note that to endow the various diffeomorphism groups of $M$ with a Fr\'echet Lie
group structure, one has to consider compactly supported
diffeomorphisms, if $M$ is not compact.

We restrict to the Lie subalgebra of those compactly supported 
Hamiltonian vector fields which admit compactly supported 
Hamiltonian functions, denoted by $\X_{ham}^c(M,\om)$.
This Lie algebra can be identified with the space $\F_c(M)$ of compactly 
supported functions on $M$. In this case the compactly supported Hamiltonian function is uniquely determined by the Hamiltonian vector field in $\X_{ham}^c(M,\om)$ and the Poisson bracket of two compactly supported functions is again compactly supported.
Then the left momentum map 
\[
\J_L:\F(S,M)\to\X_{ham}^c(M,\om)^*,\quad \langle\J_L(f),X_h\rangle=\int_S(h\o f)\mu
\]
is equivariant.
}
\end{remark}

\subsection*{The right momentum map} 

Consider the action of the group $\Diff(S,\mu)=\{\ps\in\operatorname{Diff}(S)\mid\ps^*\mu=\mu\}$ of \textit{volume preserving diffeomorphisms} on $\F(S,M)$ by composition on the right. A direct computation shows that this action preserves the symplectic form $\bar\om$. 
In order to show the existence of a momentum map, we need to restrict the action to the subgroup $\operatorname{Diff}_{ex}(S,\mu)$ of \textit{exact volume preserving diffeomorphisms}. Recall that the Lie algebra of this group consists of exact divergence free vector fields, that is 
$\X_{ex}(S,\mu)=\{X\in\mathfrak{X}(S)\mid \mathbf{i}_{X}\mu \;\text{is exact}\}$. We use the notation $X_\alpha$ for an exact divergence free vector field such that $\mathbf{i}_{X_\alpha}\mu =\mathbf{d}\alpha$ with $\al\in\Om^{k-2}(S)$ called {\it potential form}. Note that if the $(k-1)$-th cohomology group of $S$ vanishes (which is the same as the vanishing of the first cohomology group, since $S$ is oriented), then $\operatorname{Diff}_{ex}(S,\mu)$ coincides with the identity component of $\operatorname{Diff}(S,\mu)$. See \cite{Ba97} for further informations concerning this group. The infinitesimal generator associated to a Lie algebra element $X_\alpha\in\mathfrak{X}_{ex}(S,\mu)$ is 
$$
\widehat X_\al(f)=Tf\circ X_\al. 
$$
The next Lemma is crucial for showing the existence of a momentum map. In the proof we will make use of the \textit{hat pairing} \cite{Vi09}
\[
\Om^p(M)\x\Om^q(S)\to\Om^{p+q-k}(\F(S,M)),\quad (\omega,\alpha)\mapsto \widehat{\om\cdot\al}
\]
whose definition and properties are recalled in the Appendix.

\begin{lemma}\label{hatal} 
The infinitesimal generator $\widehat X_\al$ is a Hamiltonian vector field on $(\F(S,M),\bar\omega)$ with Hamiltonian function $-\widehat{\om\cdot\al}$ given by
\[
-\widehat{\om\cdot\al}(f)=-\int_S f^*\omega\wedge\alpha.
\]
\end{lemma}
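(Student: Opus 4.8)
The plan is to verify the Hamiltonian vector field equation
\[
\mathbf{i}_{\widehat X_\al}\bar\om=\dd\bigl(-\widehat{\om\cdot\al}\bigr)
\]
directly within the hat calculus recalled in the Appendix, rather than by evaluating both sides on an arbitrary tangent vector $U_f$; the latter is awkward precisely because, as noted after the construction of $\bar\om$, such a $U_f$ need not be of the form $u\circ f$. Two ingredients are needed: the graded Leibniz rule for the exterior derivative of a hat object, and the effect of the interior product $\mathbf{i}_{\widehat X_\al}$ on a hat object. Both rest on the description of the hat pairing through the evaluation map $\ev\colon S\times\F(S,M)\to M$, $\ev(x,f)=f(x)$, followed by fibre integration over the compact manifold $S$.

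For the right-hand side I would use $\dd\om=0$ together with the Leibniz rule for the hat pairing to get $\dd\widehat{\om\cdot\al}=\pm\widehat{\om\cdot\dd\al}$, the sign being fixed by the conventions of the Appendix. Since $X_\al$ is by definition the exact divergence free field determined by $\mathbf{i}_{X_\al}\mu=\dd\al$, this becomes
\[
\dd\widehat{\om\cdot\al}=\pm\widehat{\om\cdot\mathbf{i}_{X_\al}\mu},
\]
a form on $\F(S,M)$ of the expected degree $2+(k-1)-k=1$.

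For the left-hand side I would first record that the bar form is the hat pairing with the volume form, $\bar\om=\widehat{\om\cdot\mu}$. The key point is that the vector fields $(X_\al,0)$ and $(0,\widehat X_\al)$ on $S\times\F(S,M)$ are $\ev$-related to one and the same field on $M$: since $\widehat X_\al(f)=Tf\circ X_\al$, one has $T\ev\cdot(X_\al,0)=Tf\circ X_\al=T\ev\cdot(0,\widehat X_\al)$. Consequently $\mathbf{i}_{(0,\widehat X_\al)}\ev^*\om=\mathbf{i}_{(X_\al,0)}\ev^*\om$, while $(0,\widehat X_\al)$ annihilates $\pr_S^*\mu$. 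Carrying the contraction past the fibre integration and keeping only the part of $\ev^*\om$ of $S$-degree one, which is the only contribution surviving integration over the $k$-dimensional fibre, yields
\[
\mathbf{i}_{\widehat X_\al}\bar\om=\pm\widehat{\om\cdot\mathbf{i}_{X_\al}\mu},
\]
with the same sign as before. Comparing the two displays then gives the asserted formula.

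The genuinely delicate step, and the one I expect to demand the most care, is this interior product computation: transferring the contraction by the vertical field $\widehat X_\al$ on $\F(S,M)$ into a contraction by the horizontal field $X_\al$ on $S$ via $\ev$-relatedness, and then matching bidegrees so that the term surviving fibre integration is exactly $\widehat{\om\cdot\mathbf{i}_{X_\al}\mu}$ with the correct sign. Checking that this sign coincides with the Leibniz sign on the right-hand side is the routine but necessary bookkeeping that completes the proof, and it is exactly the kind of manipulation the hat calculus of the Appendix is designed to systematize.
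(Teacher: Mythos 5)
Your strategy coincides with the paper's: both sides of the Hamiltonian condition are reduced to the single object $\widehat{\om\cdot\ii_{X_\al}\mu}$, the right-hand side by the Leibniz rule for the hat pairing (Proposition \ref{dddd}) and the left-hand side by the interior-product rule (Proposition \ref{five}), whose proof via $\ev$-relatedness of $(X_\al,0)$ and $(0,\widehat X_\al)$ you correctly sketch. The genuine error is your sign claim. With the paper's conventions, Proposition \ref{dddd} with $p=2$ and $\dd\om=0$ gives
\begin{equation*}
\dd(\widehat{\om\cdot\al})=(-1)^{2}\,\widehat{\om\cdot\dd\al}=+\,\widehat{\om\cdot\ii_{X_\al}\mu},
\end{equation*}
whereas Proposition \ref{five}, item 3, with $p=2$ gives
\begin{equation*}
\ii_{\widehat X_\al}\bar\om=\ii_{\widehat X_\al}\widehat{\om\cdot\mu}=(-1)^{2+1}\,\widehat{\om\cdot\ii_{X_\al}\mu}=-\,\widehat{\om\cdot\ii_{X_\al}\mu}.
\end{equation*}
The two signs are opposite, not ``the same as before'': it is exactly this opposition that produces $\ii_{\widehat X_\al}\bar\om=\dd(-\widehat{\om\cdot\al})$, i.e.\ the minus sign in the Hamiltonian function, which is the only nontrivial content of the lemma beyond the general hat calculus. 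Had the signs coincided, as you assert twice, comparing your two displays would yield $\ii_{\widehat X_\al}\bar\om=\dd(+\widehat{\om\cdot\al})$, so your argument as written is internally inconsistent: the equal-sign claim and the asserted conclusion cannot both hold.

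The error sits precisely in the step you single out as delicate, and it is fixable inside your own bidegree bookkeeping. The only component of $\ev^*\om$ contributing on either side is the one of bidegree $(1,1)$, say $\be$. Since $\be\wedge\pr^*\mu$ has $S$-degree $k+1$, it vanishes identically, so the antiderivation rule for $\ii_{(X_\al,0)}$ (with sign $(-1)^{\deg\be}=+1$) forces
\begin{equation*}
0=\ii_{(X_\al,0)}\bigl(\be\wedge\pr^*\mu\bigr)=\bigl(\ii_{(X_\al,0)}\be\bigr)\wedge\pr^*\mu+\be\wedge\pr^*(\ii_{X_\al}\mu),
\end{equation*}
and fibre integration of the resulting identity $\bigl(\ii_{(X_\al,0)}\be\bigr)\wedge\pr^*\mu=-\,\be\wedge\pr^*(\ii_{X_\al}\mu)$ is where the relative minus sign enters. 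With this correction your argument becomes exactly the paper's one-line chain $\dd(\widehat{\om\cdot\al})=\widehat{\om\cdot\dd\al}=\widehat{\om\cdot\ii_{X_\al}\mu}=-\ii_{\widehat X_\al}\bar\om$.
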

\begin{proof} From $\mathbf{i}_{X_\al}\mu=\dd\al$ it follows that
\begin{gather*}
\dd(\widehat{\om\cdot\al})=\widehat{\dd\om\cdot\al}+\widehat{\om\cdot\dd\al}=\widehat{\om\cdot \mathbf{i}_{X_\al}\mu}=-\mathbf{i}_{\widehat X_\al}\widehat{\om\cdot\mu}=-\mathbf{i}_{\widehat X_\al}\bar\om,
\end{gather*}
from the properties of the hat pairing in the Appendix.
\end{proof}

\medskip

Note that from this result, one is tempted to define a momentum map $\mathbf{J}_R:\F(S,M)\to\X_{ex}(S,\mu)^*$ by the formula $\langle\mathbf{J}_R(f),X_\alpha\rangle=-\widehat{\omega\cdot\alpha}(f)$. However, this expression of $\mathbf{J}_R$ is not well-defined since in general the right hand side depends on a particular choice of the potential $\alpha$ associated to $X_\alpha$.

\begin{remark}[Duality pairing]\label{pairing_alpha}{\rm Below, we will identify the dual space $\X_{ex}(S,\mu)^*$ with the space of exact
$2$-forms $\mathbf{d}\Omega^1(S)$ through the $L^2$ pairing
\begin{equation}\label{L2_pairing}
\langle \gamma,X_\alpha\rangle:=\int_S\gamma\wedge\alpha.
\end{equation}
By Stokes' theorem and exactness of $\gamma$, the last integral does not depend on the choice of the potential form
$\al$ for $X_\al$.
Relative to this pairing, the coadjoint action on $\mathbf{d}\Omega^1(S)$ is given by $\operatorname{Ad}^*_\varphi\gamma=\varphi^*\gamma$.
}
\end{remark}

\paragraph{The case of an exact symplectic form.}
If we assume that the symplectic form $\omega$
is exact, then it is readily checked that the Hamiltonian function $-\widehat{\om\cdot\al}$
is independent of the choice of the potential $\al$. In this case, the momentum map 
\begin{gather*}
\mathbf{J}_R:\mathcal{F}(S,M)\to \X_{ex}(S,\mu)^*,\quad
\langle\mathbf{J}_R(f),X_\al\rangle
=-\widehat{(\om\cdot\al)}(f)
=-\int_Sf^*\om\wedge\al
\end{gather*}
is well-defined. Using the pairing \eqref{L2_pairing}, it can be written as
\begin{equation}\label{exactj}
\mathbf{J}_R:\F(S,M)\rightarrow\mathfrak{X}_{ex}(S,\mu)^*\simeq\dd\Omega^1(S),\;\;\mathbf{J}_R(f)=-f^*\om.
\end{equation}
It is clearly equivariant since $\mathbf{J}_R(f\circ\varphi)=\varphi^*f^*\omega=\operatorname{Ad}^*_\varphi\mathbf{J}_R(f)$.

\paragraph{The case of a general symplectic form.} Recall that in order to remove the dependence of the left momentum map $\mathbf{J}_L$ on a particular potential $h$, we chose one point $m_i$ in each connected component of $M$, and considered the unique Hamiltonian function $h_0$ vanishing at these points. Equivalently, we can say that we have considered points $m_i$ in $M$ such that $[m_i]$ form a basis of $H_0(M; \mathbb{R}  )$. We shall use a similar idea to remove the dependence of $\mathbf{J}_R$ on the potential $\alpha$. Assume that $ k\geq 3$ (see below for the case $k=2$) and fix a set of $(k-2)$-dimensional submanifolds $N_1,...,N_p$ of $S$ which determine a basis of the singular homology group $H_{k-2}(S; \mathbb{R} )$. Then, given $X_\alpha\in\mathfrak{X}_{ex}(S,\mu)$, there is a unique potential $\alpha_0$ of $X_\alpha$, up to an exact form, such that
\[
\int_{N_i}\alpha_0=0,\quad\text{for all}\quad i=1,...,p.
\]
Indeed, if $\be_0$ verifies the same properties, then $\be_0-\alpha_0$ is closed and the integral $\int_{N_i}(\be_0-\alpha_0)$ vanishes for all $i=1,...,p$, which proves that $\be_0-\alpha_0$ is exact, by de Rham's Theorem. 
We conclude that the momentum map given by
\begin{equation}\label{jrzero}
\left\langle\mathbf{J}_R(f),X_\alpha\right\rangle
=-\widehat{(\om\cdot\al_0)}(f)=-\int_S f^*\omega\wedge \alpha_0,
\end{equation}
is well-defined. In particular it depends only on the equivalence class $[\alpha_0]\in\Omega^{k-2}(S)/\mathbf{d}\Omega^{k-3}(S)$ since $\omega$ is closed.


In general this momentum map is not equivariant, therefore it is not a Poisson map.
Using formula \eqref{sig} for right actionns, we now compute the $H^0(\F(S,M))$-valued Lie algebra 2--cocycle $\si$ 
on $\X_{ex}(S,\mu)$ measuring the nonequivariance of $\J_R$. We have for all $f\in\F(S,M)$
\begin{align}\label{2_cocycle}
\si(X,Y)(f)&=\langle\J_R(f),[X,Y]\rangle
-\bar\om(\hat X,\hat Y)(f)
=-\int_S(f^*\omega)\wedge\left(\ii_X\ii_Y\mu\right)_0-\int_S(f^*\omega)(X,Y)\mu\nonumber\\
&=\int_Sf^*\om\wedge\left(\ii_{X}\ii_{Y}\mu-(\ii_{X}\ii_{Y}\mu)_0\right)
=\left\langle[f^*\omega],[\ii_{X}\ii_{Y}\mu-(\ii_{X}\ii_{Y}\mu)_0]\right\rangle,
\end{align}
where we used that $\ii_X\ii_Y\mu$ is a potential form for 
the exact divergence free vector field $[X,Y]$ and the pairing between $H^2(S)$ and $H^{k-2}(S)$
in the last term.

Proposition \ref{ue} ensures that the Lie algebra action of the central extension $\widehat{\X_{ex}(S,\mu)}$ of 
$\X_{ex}(S,\mu)$ by $H^0(\F(S,M))$,
with characteristic cocycle $\si$, admits the infinitesimally equivariant momentum map 
\begin{equation*}
\hat\J_R:\F(S,M)\to\widehat{\X_{ex}(S,\mu)}^*,\quad\hat{\mathbf{J}}_R(f)=(\mathbf{J}_R(f),-[f]),
\end{equation*}
where $[f]\in H_0(\F(S,M))$ denotes the connected component of $f$.

{We now use Proposition \ref{te} to write an infinitesimally equivariant momentum map associated to a more natural central extension of  $\X_{ex}(S,\mu )$, namely, an extension by $H^{k-2}(S)$. In order to do this, we consider the linear map
\[
T:H^{k-2}(S)\to H^0(\F(S,M)),\quad T([\al])=\widehat{\om\cdot\al}
\]
induced by the hat pairing with the closed 2-form $\om$ on $M$.
Note that $T$ is well defined since the hat pairing depends only on the cohomology class of the closed form $\al$, and the differential of the function $\widehat{\om\cdot\al}$
vanishes by Proposition \ref{dddd}.}
With the identification of the dual of $H^{k-2}(S)$ with $H^2(S)$,
the dual map of $T$ can be written as
\[
T^*:H_0(\F(S,M))\to H^2(S),\quad T^*([f])=[f^*\om],
\]
because for any closed $(k-2)$-form $\al$ on $S$,
$\langle[f^*\om],[\al]\rangle=\int_S f^*\om\wedge\al=\widehat{\om\cdot\al}(f)=\langle [f],T([\al])\rangle
=\langle T^*([f]),[\al]\rangle$. We now observe that the cocycle $\si$ can be written in the form $\si=T\o\si_T$, where $\si_T$ is the
$ H^{k-2}(S)$-valued cocycle
\begin{equation}\label{cocycle_T_R}
\si_T(X,Y)=[\ii_X\ii_Y\mu-(\ii_X\ii_Y\mu)_0].
\end{equation}
This is seen from the computation
\begin{equation*}
\langle T(\si_T(X,Y)),[f]\rangle=\widehat{\om\cdot(\ii_X\ii_Y\mu-(\ii_X\ii_Y\mu)_0)}(f)=\langle\si(X,Y),[f]\rangle.
\end{equation*}
{Following the notation of Proposition \ref{te}, we denote by $\widehat{\X_{ex}(S,\mu)}_T$ the central extension of $\X_{ex}(S,\mu)$ by $H^{k-2}(S)$ defined by $\si_T$.}

{We now show that $\widehat{\X_{ex}(S,\mu)}_T$ is isomorphic to a more familiar central extension of the Lie algebra of 
exact divergence free vector fields. }
Consider the central extension
\begin{equation}\label{extension_ex}
0\to H^{k-2}(S)\to\Om^{k-2}(S)/\dd\Om^{k-3}(S)\to\X_{ex}(S,\mu)\to 0,
\end{equation}
due to \cite{Li74} \cite{Ro95}. Here the Lie algebra bracket on $\Om^{k-2}(S)/\dd\Om^{k-3}(S)$
is given by $\{[\al],[\be]\}=[\ii_{X_\be}\ii_{X_\al}\mu]$. 
One observes that the map $[\al]\mapsto X_\al$ is a surjective Lie algebra homomorphism from $\Om^{k-2}(S)/\dd\Om^{k-3}(S)$
to the Lie algebra $\X_{ex}(S,\mu)$ with opposite bracket, and 
that its kernel is given by $H^{k-2}(S)$ and is central. 

A section of \eqref{extension_ex} is obtained by assigning to $X_\al$ 
the unique potential form $\al_0$ satisfying $\int_{N_i}\alpha_0=0$ for all $i$.
The characteristic cocycle computed with this section is exactly $\si_T$ form \eqref{cocycle_T_R}. 
The Lie algebra isomorphism between the central extension $\widehat{\X_{ex}(S,\mu)}_T$
defined by $\si_T$ and $\Om^{k-2}(S)/\dd\Om^{k-3}(S)$ is
\begin{equation}\label{LR_isom}
[\al]\in \Om^{k-2}(S)/\dd\Om^{k-3}(S)\mapsto (X_\al,[\al-\al_0])\in 
\widehat{\X_{ex}(S,\mu)}_T=\X_{ex}(S,\mu)\x H^{k-2}(S).
\end{equation}

We can apply Proposition \ref{te} to get an infinitesimally equivariant momentum map for the central extension
$\widehat{\X_{ex}(S,\mu)}_T$ of $\X_{ex}(S,\mu)$ by $H^{k-2}(S)$ defined by $\si_T$
\begin{equation*}
\hat\J^T_R:\F(S,M)\to\widehat{\X_{ex}(S,\mu)}^*_T,\quad\hat\J^T_R(f)=(\J_R(f),-T^*([f]))=(\J_R(f),-[f^*\om]),
\end{equation*}
This infinitesimally equivariant momentum map, transferred to the central extension $\Om^{k-2}(S)/\dd\Om^{k-3}(S)$ with the help of the Lie algebra isomorphism \eqref{LR_isom}, is:
\begin{align}\label{jh}
\left\langle\hat{\mathbf{J}}_R(f),[\al]\right\rangle
&=\left\langle\hat{\mathbf{J}}^T_R(f),(X_\al,[\al-\al_0])\right\rangle
=\left\langle{\mathbf{J}}_R(f),X_\al\right\rangle-\left\langle T^*([f]),[\al-\al_0]\right\rangle\nonumber\\
&=-\int_S f^*\om\wedge\al_0-\widehat{\om\cdot(\al-\al_0)}(f)=-\int_Sf^*\om\wedge\al.
\end{align}
Since the (regular) dual of $\Om^{k-2}(S)/\dd\Om^{k-3}(S)$ can be identified
with the space of closed $2$-forms on $S$
through the $L^2$ pairing 
\begin{equation}\label{etalpha}
\langle\et,[\al]\rangle=\int_S\et\wedge\al,
\end{equation}
the equivariant momentum map \eqref{jh} becomes 
\[
\hat\J_R:\F(S,M)\to (\Om^{k-2}(S)/ \mathbf{d} \Om^{k-3}(S))^*=Z^2(S),\quad\hat\J_R(f)=-f^*\om.
\]

The central extension \eqref{extension_ex} is universal \cite{Ro95}, in  particular 
$H^2(\X_{ex}(S,\mu))\simeq H^2(S)$,
the isomorphism being given by the map which associates to a cohomology class $[\et]\in H^2(S)$ the cohomology class $[\si_\et]$ of the Lichnerowicz cocycle
\[
\sigma_\eta(X,Y)=\int_S\eta(X,Y)\mu,\quad X,Y\in\mathfrak{X}(S,\mu).
\]

Given a connected component $\F_0$ of $\F(S,M)$, we compute the nonequivariance of 
$\J_R$ on $\F_0$:
\begin{align}\label{LA_2_cocycle}
\si_{\F_0}(X,Y)&=\si(X,Y)(f)=T(\si_T(X,Y))(f)=\langle[f^*\om],\si_T(X,Y)\rangle, f\in\F_0.
\end{align}
As it should, the right hand side of \eqref{LA_2_cocycle} does not depend on $f\in\F_0$ since the cohomology class $[f^*\omega]$ does not depend on $f\in\F_0$.
The cocycle \eqref{LA_2_cocycle} is cohomologous to the Lichnerowicz cocycle $\si_\et$ with $\et=-f^*\om$.
Indeed, because the assignement $\al\mapsto \al_0$ depends only on $X_\al$, hence on $\dd\al$,
we can write $\al_0=b(\dd\al)$ for some linear map $b:\dd\Om^{k-2}(S)\to\Om^{k-2}(S)$ (a right inverse to $\dd$). 
Using $\dd\ii_X\ii_Y\mu=\ii_{[X,Y]}\mu$, we compute the difference
\begin{align*}
(\si_\et-\si_{\F_0})(X,Y)
&=-\int_M({f^*\om})(X,Y)\mu-\int_M f^*\om\wedge(\ii_X\ii_Y\mu-(\ii_X\ii_Y\mu)_0)\\
&=\int_M f^*\om\wedge (\ii_X\ii_Y\mu)_0
=\int_Mf^*\om\wedge b\dd\ii_X\ii_Y\mu=\int_M f^*\om\wedge b(\ii_{[X,Y]}\mu)
\end{align*}
and we notice it is a coboundary.
For cocycles cohomologous to Lichnerowicz cocycles see \cite{Vi10}.

\medskip

Assume without loss of generality that $\int_S\mu=1$. The central extended Lie algebra $\Om^{k-2}(S)/\dd\Om^{k-3}(S)$ admits an underlying Lie group central extension of $\operatorname{Diff}_{ex}(S,\mu)$ by the torus $H^{k-2}(S)/L$,
where 
\[
L=\left\{[\al]\in H^{k-2}(S)\ :\ \int_{N_i}\al\in\ZZ, \;i=1,\dots,p\right\}
\]
with $N_1,\dots,N_p$ fixed $(k-2)$-dimensional submanifolds of $S$ which determine a basis for $H_{k-2}(S,\RR)$
as above, see \cite{Is96} Section 25.5 for details.

\begin{theorem}\label{equiv_right_momap}
Let $S$ be a compact manifold endowed with a volume form $\mu$, and let $(M,\omega)$ be symplectic manifold. Then
\begin{equation}\label{hat_momap_right}
\hat{\mathbf{J}}_R:\mathcal{F}(S,M)\rightarrow
(\Om^{k-2}(S)/\dd\Om^{k-3}(S))^*,\quad \left\langle\hat{\mathbf{J}}_R(f),[\al]\right\rangle=-\widehat{\om\cdot\al}(f)=-\int_Sf^*\om\wedge\al
\end{equation}
is an infinitesimally equivariant momentum map for the Hamiltonian right Lie algebra action of the central extension $\Om^{k-2}(S)/\dd\Om^{k-3}(S)$ of $\X_{ex}(S,\mu)$.
With the identification \eqref{etalpha} the momentum map can be written shortly $\hat\J_R(f)=-f^*\om\in Z^2(S)$.

Assuming  that $\int_S\mu=1$, this Lie algebra action can be integrated to a right Hamiltonian Lie group action on $\mathcal{F}(S,M)$, of
the central extension of the group $\Diff_{ex}(S,\mu)$ of exact
volume preserving diffeomorphisms by the torus $H^{k-2}(S)/L$, 
with equivariant momentum map $\hat{\mathbf{J}}_R$.
\end{theorem}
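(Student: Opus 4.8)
The plan is to mirror the two-step proof of the left momentum map theorem: the infinitesimal assertion is already contained in the construction preceding the statement, so the only genuine work is to verify equivariance of $\hat\J_R$ under the integrated group action.

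For the infinitesimal part I would simply collect what has been done. Lemma \ref{hatal} shows that each infinitesimal generator $\widehat X_\al$ is Hamiltonian on $(\F(S,M),\bar\om)$ with Hamiltonian $-\widehat{\om\cdot\al}$; the cocycle \eqref{2_cocycle} measures the nonequivariance of the naive $\J_R$; and Propositions \ref{ue} and \ref{te}, together with the Lie algebra isomorphism \eqref{LR_isom}, upgrade this to the infinitesimally equivariant momentum map \eqref{jh} for the central extension $\Om^{k-2}(S)/\dd\Om^{k-3}(S)$. Reading \eqref{jh} through the pairing \eqref{etalpha} yields the closed expression $\hat\J_R(f)=-f^*\om$, which lies in $Z^2(S)$ because $\dd(f^*\om)=f^*\dd\om=0$. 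This establishes the first assertion.

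For the group statement I would first invoke the normalization $\int_S\mu=1$ and the integration of $\Om^{k-2}(S)/\dd\Om^{k-3}(S)$ to the Lie group central extension of $\Diff_{ex}(S,\mu)$ by the torus $H^{k-2}(S)/L$ recalled above and in \cite{Is96}; its right action on $\F(S,M)$ covers $f\mapsto f\circ\psi$, the central torus acting trivially. The equivariance is then a one-line change of variables: for $[\al]\in\Om^{k-2}(S)/\dd\Om^{k-3}(S)$ and $\psi$ orientation-preserving,
\[
\langle\hat\J_R(f\circ\psi),[\al]\rangle=-\int_S\psi^*(f^*\om)\wedge\al=-\int_S f^*\om\wedge(\psi^{-1})^*\al=\langle\hat\J_R(f),[(\psi^{-1})^*\al]\rangle.
\]

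It remains to match the right-hand side with the coadjoint action, and this is the only point that really needs care. One has to identify the adjoint action of the group extension on the Lie algebra $\Om^{k-2}(S)/\dd\Om^{k-3}(S)$ with pullback, $\Ad_{\hat\psi}[\al]=[(\psi^{-1})^*\al]$, with no anomalous contribution in the central $H^{k-2}(S)$ direction; this extends the coadjoint formula $\Ad^*_\psi\ga=\psi^*\ga$ of Remark \ref{pairing_alpha} from $\dd\Om^1(S)$ to the full dual $Z^2(S)$. Granting this, the displayed computation reads $\hat\J_R(f\circ\psi)=\Ad^*_{\hat\psi}\hat\J_R(f)$, while the central torus, being central, acts trivially both on $\F(S,M)$ and via $\Ad^*$ on $Z^2(S)$; hence $\hat\J_R$ is equivariant for the full extension, completing the proof. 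The main obstacle is thus this coadjoint-action identification: everything else is a restatement of the construction already carried out, together with a change of variables that uses only that $\psi$ preserves $\mu$ and the orientation.
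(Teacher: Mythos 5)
Your proposal is correct and follows essentially the same route as the paper's proof: the infinitesimal statement is taken as already established by the preceding construction, and the group statement reduces to the change-of-variables computation $\langle\hat\J_R(f\circ\psi),[\al]\rangle=-\int_S f^*\om\wedge(\psi^{-1})^*\al$ together with the identification $\Ad_{\Psi}[\al]=[(\psi^{-1})^*\al]$ of the adjoint action of the extended group. The paper, like you, simply asserts this last identification rather than proving it, so your flagging it as the one point needing care matches (and if anything slightly sharpens) the published argument.
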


\begin{proof} From the discussion above, it suffices to check the equivariance of $\hat\J_R$.
Given an element $\Ps$ in the central extension of the group $\Diff_{ex}(S,\mu)$,
sitting over $\ps\in\Diff_{ex}(S,\mu)$, the right action of $\Psi$ on $f\in\F(S,M)$ is $\Psi\cdot f=f\o\ps$, and
\[
\langle\hat\J_R(\Ps\cdot f),[\al]\rangle
=-\int_S(f\o\ps)^*\om\wedge\al
=-\int_Sf^*\om\wedge(\ps^{-1})^*\al
=\langle\Ad^*_{\Ps}\hat\J_R(f),[\al]\rangle
\]
since the adjoint action in the central extended group is $\Ad_{\Ps}[\al]=[(\ps^{-1})^*\al]$
for all $[\al]\in \Om^{k-2}(S)/\dd\Om^{k-3}(S)$.
\end{proof}
All the considerations above, including Theorem \ref{equiv_right_momap}, are still valid for $k=2$, but in this case the central extension \eqref{extension_ex} becomes
\[
0\to H^0(S)\to\F(S)\to\X_{ham}(S,\mu)\to 0
\]
and is trivial because $S$ is compact. To see this it suffices to make use of the Lie algebra isomorphism $\alpha \in \mathcal{F} (S) \mapsto \left(  X_ \alpha ,\left(\int_{S_1} \alpha \mu,\dots,\int_{S_p} \alpha \mu\right) \right) \in \mathfrak{X}_{ham}(S)\times H^0(S)$, where $S_1,\dots,S_p$ are the connected components of $S$. This suggests that we can find an equivariant right momentum map without passing to central extensions of $\X_{ham}(S,\mu)$. Indeed, to each Hamiltonian vector field $X_\al$ {(or to each function $ \alpha $)} we can associate the unique Hamiltonian function $\al^0\in\F(S)$ {of $X_ \alpha $} with vanishing integral $\int_{S_i}\al^0\mu$ over each connected component $S_i$ of $S$. Then, using the function $\al^0$ instead of $\al_0$ in \eqref{jrzero}, we get an equivariant momentum map
$\langle\J_R(f),X_\al\rangle=-\int_Sf^*\om\wedge\al^0$. To see this, we notice that
$\mu(X,Y)\mu$ is an exact 2-form for all Hamiltonian vector fields $X,Y\in\X_{ham}(S,\mu)$,
so $\int_{S_i}\mu(X,Y)\mu=0$ for all $i$. This implies $\mu(X,Y)^0=\mu(X,Y)$, and we compute for all $f\in \mathcal{F} (S,M)$:
\begin{align*}
-\langle\J_R(f),[X,Y]\rangle&=\int_Sf^*\om\wedge\mu(X,Y)^0
=\int_Sf^*\om\wedge\mu(X,Y)=\int_S(f^*\om)(X,Y)\mu\\
&=\bar\om(\hat X,\hat Y)(f).
\end{align*}


\section{Dual pair for Euler equation}\label{quatre}

In this section, we show that the pair of momentum maps $\hat\J_L$ and $\hat\J_R$ obtained in \S\ref{trois} form a weak dual pair on the symplectic manifold $(\F(S,M),\bar\omega)$. We then prove that the restriction of this weak dual pair to
the open subset $\operatorname{Emb}(S,M)$ is a dual pair under the topological condition $H^1(S)=0$.

\medskip

The natural actions of the Hamiltonian group $\Diff_{ham}(M,\om)$ and of the group $\Diff_{ex}(S,\mu)$ of exact volume preserving diffeomorphisms are two commuting symplectic actions on the symplectic manifold $\F(S,M)$. As we have shown in \S\ref{trois}, the induced infinitesimal actions of the central extensions $\widehat{\X_{ham}(M,\om)}=\F(M)$ and $\widehat{\X_{ex}(S,\mu)}=\Om^{k-2}(S)/\dd\Om^{k-3}(S,\mu)$ admit the infinitesimally equivariant momentum maps $\hat\J_L(f)=f_*\mu$ and 
$\hat\J_R(f)=-f^*\om$.

\begin{lemma} The pair of momentum maps

\begin{picture}(150,100)(-70,0)%
\put(95,75){$(\F(S,M),\bar\om)$}

\put(90,50){$\hat{\J}_L$}

\put(160,50){$\hat{\J}_R$}

\put(2,15){$\Den_c(M)=\widehat{\X_{ham}(M,\om)}^*$}

\put(150,15){$\widehat{\X_{ex}(S,\mu)}^*=Z^2(S)$}

\put(130,70){\vector(-1, -1){40}}

\put(135,70){\vector(1,-1){40}}

\end{picture}\\
is a weak dual pair.
\end{lemma}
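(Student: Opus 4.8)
The plan is to obtain the weak dual pair property as an immediate consequence of Proposition \ref{zero}. Everything needed to invoke it has already been assembled in Section \ref{trois}: the maps $\hat\J_L(f)=f_*\mu$ and $\hat\J_R(f)=-f^*\om$ are infinitesimally equivariant momentum maps for the Lie algebra actions on $(\F(S,M),\bar\om)$ of the central extensions $\g_1=\widehat{\X_{ham}(M,\om)}=\F(M)$ and $\g_2=\widehat{\X_{ex}(S,\mu)}=\Om^{k-2}(S)/\dd\Om^{k-3}(S)$, respectively. By Proposition \ref{zero}, the weak dual pair property will follow as soon as one of the equivalent infinitesimal invariance conditions holds, and the most convenient one to verify is $\bar\om(\bar X_h(f),\widehat X_\al(f))=0$ for all $h\in\F(M)$, $\al\in\Om^{k-2}(S)$ and all $f\in\F(S,M)$. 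Here $\bar X_h$ and $\widehat X_\al$ are the infinitesimal generators of the left and right actions; since the central summands $H^0(M)$ and $H^{k-2}(S)$ map to the zero vector field and hence act trivially on $\F(S,M)$, these are indeed all the generators to be tested.

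First I would compute this pairing pointwise. For $x\in S$, both $\bar X_h(f)(x)=X_h(f(x))$ and $\widehat X_\al(f)(x)=T_xf(X_\al(x))$ lie in $T_{f(x)}M$, and using $\ii_{X_h}\om=\dd h$ one gets
\begin{equation*}
\om_{f(x)}\big(X_h(f(x)),\,T_xf(X_\al(x))\big)=\dd h_{f(x)}\big(T_xf(X_\al(x))\big)=\dd(h\o f)_x\big(X_\al(x)\big).
\end{equation*}
Integrating against $\mu$, and using that $\mu$ is of top degree together with $\ii_{X_\al}\mu=\dd\al$, I would rewrite
\begin{equation*}
\bar\om_f(\bar X_h(f),\widehat X_\al(f))=\int_S \dd(h\o f)(X_\al)\,\mu=\int_S \dd(h\o f)\wedge\dd\al=\int_S\dd\big((h\o f)\,\dd\al\big),
\end{equation*}
which vanishes by Stokes' theorem since $S$ is closed. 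Thus $\bar\om(\bar X_h,\widehat X_\al)=0$ identically.

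With this orthogonality established, Proposition \ref{zero} immediately yields that $\hat\J_L$ and $\hat\J_R$ form a weak dual pair; as a byproduct the same proposition records that the two Lie algebra actions commute, consistent with the commuting group actions noted above. (Alternatively, since both actions integrate to the group actions of $\operatorname{Quant}(P,\theta)$ and of the central extension of $\Diff_{ex}(S,\mu)$, one could invoke Corollary \ref{cipolla} instead, but this reduces to the very same computation.) The only genuine content of the argument is the orthogonality identity, and the one step that deserves care is the passage from the pointwise symplectic pairing to $\dd(h\o f)\wedge\dd\al$ through the exactness relation $\ii_{X_\al}\mu=\dd\al$, after which compactness of $S$ finishes the job. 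The same computation could equivalently be packaged via the bar map and hat pairing of the Appendix, but the elementary pointwise version makes the mechanism — and the essential role of the exactness of $\ii_{X_\al}\mu$ — most transparent.
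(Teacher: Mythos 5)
Your proposal is correct and follows the paper's own route: both invoke Proposition \ref{zero} and reduce the weak dual pair property to the single orthogonality identity $\bar\om(\bar X_h,\hat X_\al)=0$ for all $h\in\F(M)$, $\al\in\Om^{k-2}(S)$. The only difference is in packaging: the paper verifies this identity through the hat-calculus rules of the Appendix ($\ii_{\bar X_h}\bar\om=\overline{\dd h}$, then $\ii_{\hat X_\al}\widehat{\dd h\cdot\mu}=\widehat{\dd h\cdot\dd\al}=0$), while you unpack the same computation pointwise, using $\ii_{X_h}\om=\dd h$, $\ii_{X_\al}\mu=\dd\al$, and Stokes' theorem on the closed manifold $S$ — which is exactly the mechanism underlying those hat-calculus identities.
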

\begin{proof} This follows from Proposition \ref{zero}. One has only to check that the symplectic form
$\bar\om$ vanishes on pairs of infinitesimal generators for the two
commuting actions: 
\begin{align}\label{ooo}
\bar\om\left(h_{\mathcal{F}(S,M)},[\al]_{\mathcal{F}(S,M)}\right)
&=\bar\om(\bar X_h,\hat X_\al)=\ii_{\hat X_\al}\overline{\dd h}
=\ii_{X_\al}\widehat{\dd h\cdot\mu}=\widehat{\dd h\cdot\dd\al}=0.
\end{align}
We conclude that $\hat\J_L$ and $\hat\J_R$ determine a weak dual pair structure. 
\end{proof}

\medskip

For the proof of the dual pair property \eqref{strong_dual_pair} we need a few technical lemmas. The first lemma is detached from the proof of Proposition 3 in \cite{HaVi2004}.

\begin{lemma}\label{function}
Let $(M,g)$ be a Riemannian manifold and $N\subset M$ a submanifold. Let $E$ be the normal bundle $TN^\perp$ 
viewed as a tubular neighborhood of $N$ in $M$,
with the identification done by the
Riemannian exponential map. 
Then any section $\al\in\Ga(T^*M|_N)$ vanishing on $TN$,
when restricted to $TN^\perp$, 
defines a smooth function $h\in\F(E)$,
linear on each fiber,
whose differential along $N$ satisfies $(\dd h)|_N=\al$
(as sections of $T^*M|_{N}$).
\end{lemma}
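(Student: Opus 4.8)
The plan is to construct the function $h$ explicitly using the linear structure on the fibers of the normal bundle and then verify the differential condition by a direct computation in fiber-adapted coordinates. Let me set up the geometry: via the Riemannian exponential map, a tubular neighborhood of $N$ in $M$ is identified with (a neighborhood of the zero section of) the normal bundle $E=TN^\perp\to N$, so that a point of $E$ over $x\in N$ is a normal vector $v\in T_xN^\perp$. The given one-form $\al\in\Ga(T^*M|_N)$ vanishes on $TN$, so for each $x\in N$ the covector $\al_x$ is determined by its restriction to the normal space $T_xN^\perp=E_x$. This is exactly the data needed to define a fiberwise-linear function.

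First I would define $h:E\to\RR$ by
\[
h(v):=\langle\al_x,v\rangle,\qquad v\in E_x=T_xN^\perp,\ x\in N.
\]
Here $\al_x$ is viewed as a linear functional on $T_xN^\perp$ via the identification $E_x\subset T_xM$; this makes sense precisely because $\al$ annihilates $TN$, so only the normal component of $v$ matters and no ambiguity arises. Smoothness of $h$ follows from smoothness of the section $\al$ together with smoothness of the bundle projection and the linear pairing: in a local trivialization of $E$ over a chart of $N$, $h$ is a finite sum $\sum_i a_i(x)v^i$ with $a_i$ smooth, which is manifestly smooth, and by construction $h$ is linear on each fiber $E_x$.

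Next I would verify the differential condition $(\dd h)|_N=\al$ along the zero section $N\subset E$. At a point $x\in N$ (i.e.\ $v=0$), the tangent space $T_xE$ splits as $T_xN\oplus E_x$ (horizontal $\oplus$ vertical, the splitting given by the connection or simply by differentiating the two factors in the trivialization). I would compute $\dd h$ on each summand. On the vertical directions, since $h$ is linear along the fiber $E_x$, its derivative in a vertical direction $w\in E_x$ is just $\langle\al_x,w\rangle$, matching $\al_x$ on the normal space. On the horizontal directions tangent to $N$, because $h$ vanishes identically on the zero section (as $h(0)=\langle\al_x,0\rangle=0$), the derivative along any curve in $N$ is zero, which matches $\al_x$ since $\al$ annihilates $TN$ by hypothesis. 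Combining the two computations gives $(\dd h)_x=\al_x$ for every $x\in N$, as sections of $T^*M|_N$ under the identification of $T_xE|_N$ with $T_xM$.

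The only genuinely delicate point is the bookkeeping of identifications: one must be careful that the splitting $T_xE=T_xN\oplus E_x$ along the zero section is compatible with the identification $T_xE\cong T_xM$ coming from the Riemannian exponential map, so that the computed $\dd h$ really equals $\al$ as a covector on $T_xM$ and not merely on some abstract model space. This is standard, since $T_0(E_x)\cong E_x=T_xN^\perp$ canonically and $d(\exp^\perp)$ restricts to the identity along $N$, but it is the step where sign and identification conventions could slip. Everything else is the routine verification that a fiberwise-linear function built from a section has the expected first-order behavior along the base.
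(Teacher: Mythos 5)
Your proof is correct and is essentially the paper's own argument: both define $h$ as the fiberwise pairing of $\al$ with normal vectors, split the tangent space along the zero section as $T_xN\oplus T_xN^\perp$ (the paper does this via the zero section $s$ and the fiber inclusions $i_x$), and compute $\dd h$ on each summand — zero in tangential directions because $h$ vanishes on the zero section, and equal to $\al$ in normal directions because the differential of a fiberwise-linear function at the origin is the function itself. The identification issue you flag at the end is exactly what the paper's explicit map $(X_x,Y_x)\mapsto T_xs(X_x)+T_{0_x}i_x(Y_x)$ takes care of, so your argument and the paper's coincide.
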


\begin{proof}
The zero section $s:N\to TN^\perp$ and the inclusion $i_x:T_xN^\perp\to TN^\perp$, $x\in N$, are used to identify $T_xM=T_xN\oplus T_xN^\perp$ with $T_{0_x}(TN^\perp)$, namely $(X_x,Y_x)\mapsto 
T_xs(X_x)+T_{0_x}i_x(Y_x)$.
Under this identification, 
\[
\dd_xh(X_x)=\dd_{0_x}\al(T_x s(X_x))=\dd_x(\al\o s)(X_x)=0=\al(X_x)
\]
for all $X_x\in T_xN$ and
\[
\dd_xh(Y_x)=\dd_{0_x}\al(T_{0_x}i_x(Y_x))=\dd_{0_x}(\al\o i_x)(Y_x)=\al(Y_x)
\]
for all $Y_x\in T_xN^\perp$, which imply the requested identity 
$(\dd h)|_N=\al$.
\end{proof}

\medskip

In particular any $1$-form $\al$ on $M$ with the property $i^*\al=0$,
where $i:N\to M$ denotes the inclusion, satisfies the condition of Lemma \ref{function} when restricted to $TM|_N$. 

\begin{lemma}\label{lem1}
Let $S$ be a compact manifold with $H^1(S)=0$ and $f:S\to M$ an embedding.
If  a vector field $Y\in\X(M)$ satisfies $f^*\pounds_Y\om=0$,
then there exists a Hamiltonian vector field $\tilde Y\in\X(M)$ such that $\tilde Y\o f= Y\o f$.
\end{lemma}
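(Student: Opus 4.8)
The plan is to use $Y\o f$ to build, along the embedded submanifold $N:=f(S)$, the $1$-form $\al:=\ii_Y\om$, and then to exhibit a Hamiltonian vector field $\tilde Y=X_h$ whose restriction to $N$ agrees with $Y$. The natural candidate for the Hamiltonian is the function $h$ produced by Lemma \ref{function}, applied to a suitable $1$-form on $M$ that vanishes on $TN$; the crux is to arrange that the $1$-form we feed into Lemma \ref{function} restricts to $\ii_Y\om$ along $N$ up to the tangential ambiguity that the construction cannot see.

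First I would observe that the condition $f^*\pounds_Y\om=0$ together with Cartan's formula $\pounds_Y\om=\dd\ii_Y\om$ (since $\om$ is closed) gives $f^*\dd\ii_Y\om=\dd f^*\ii_Y\om=0$, so the pulled-back $1$-form $f^*(\ii_Y\om)$ is closed on $S$. Because $H^1(S)=0$, this closed form is exact: there is $k\in\F(S)$ with $f^*(\ii_Y\om)=\dd k$. Transporting $k$ to $N$ via $f$ and extending it arbitrarily to a function on $M$ produces a function $k_M$ with $\dd k_M$ prescribed along $N$ in its tangential directions; the point of introducing $k$ is to kill the tangential part of $\ii_Y\om$ so that the residual $1$-form
\[
\al:=(\ii_Y\om-\dd k_M)\big|_{TM|_N}
\]
vanishes on $TN$ along $N$. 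This $\al$ is exactly of the type required by Lemma \ref{function} (a section of $T^*M|_N$ killing $TN$), so that Lemma yields $h\in\F(M)$, defined near $N$ and extended globally, with $(\dd h)|_N=\al$.

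Finally I would set $\tilde Y:=X_{h+k_M}$ and verify $\tilde Y\o f=Y\o f$. By construction $\dd(h+k_M)|_N=\al+\dd k_M|_N=(\ii_Y\om)|_{TM|_N}$, i.e.\ $\ii_{X_{h+k_M}}\om$ and $\ii_Y\om$ agree on all of $TM$ at each point of $N$; since $\om$ is nondegenerate, this forces $X_{h+k_M}=Y$ along $N$, which is precisely $\tilde Y\o f=Y\o f$. The main obstacle, and the step that requires the hypotheses to be used in full, is the middle one: matching $\ii_Y\om$ along $N$ by an \emph{exact} differential $\dd h$. The tangential component of $\ii_Y\om$ along $N$ is an intrinsic closed form on $S$ (closedness coming from $f^*\pounds_Y\om=0$), and it can be realized as $\dd k$ only because $H^1(S)=0$; without this topological hypothesis one could not absorb the tangential part, and the normal part alone (handled cleanly by Lemma \ref{function}) would not suffice to reproduce $Y\o f$.
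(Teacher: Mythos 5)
Your proof is correct and follows essentially the same route as the paper: use $H^1(S)=0$ to write the tangential part $f^*(\ii_Y\om)$ as $\dd k$, extend $k$ to $M$, apply Lemma \ref{function} to the residual $1$-form vanishing on $TN$, and conclude by nondegeneracy of $\om$. Your $k_M$ and $h$ are exactly the paper's $\tilde h_1$ and $\tilde h_2$, with $\tilde Y=X_{h+k_M}$ matching the paper's $X_{\tilde h_1+\tilde h_2}$.
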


\begin{proof}
Because $Y\in\X(M)$ satisfies
$f^*\pounds_Y\om=0$, the $1$-form $f^*\mathbf{i}_Y\omega$ on $S$ is
closed. It is also exact by $H^1(S)=0$, so there exists
$h_1\in\F(S)$ with $f^*\mathbf{i}_Y\om=\mathbf{d} h_1$. Let $\tilde
h_1\in\F(M)$ be a smooth function extending $h_1$, in the sense that $h_1=\tilde h_1\o f$, and let $X_{\tilde h_1}$ be the associated Hamiltonian
vector field on $M$. Then $Z=Y-X_{\tilde h_1}$ satisfies
$f^*\mathbf{i}_Z\om=0$, so the $1$-form $\al=\mathbf{i}_Z\om$ on $M$
vanishes on vectors tangent to the submanifold $N=f(S)$ of $M$.

We apply Lemma \ref{function} to $\al$.
We consider a Riemannian metric on $M$ and we denote by
$E$ the normal bundle over the submanifold $N$ of
$M$, viewed as a tubular
neighborhood of $N$, with the identification done by the
Riemannian exponential map. 
The $1$-form $\al$ restricted to normal
vectors defines a function $h_2$ on $E$, linear on each fiber, and its
differential along $N$ is the restriction of the $1$-form $\al$,
\ie $(\dd h_2)|_{N}=\al|_{N}$ (as sections of $T^*M|_{N}$).

Let $\tilde h_2$ be a smooth function on $M$ with $\tilde h_2=h_2$ on a neighborhood of $N$.
Then the Hamiltonian vector field $\tilde Y=X_{\tilde h_1+\tilde h_2}$ has the required property
$\tilde Y\o f= Y\o f$,
because $(\mathbf{i}_{\tilde Y}\om)|_N=(\dd\tilde h_1)|_N
+(\dd\tilde h_2)|_N=(\mathbf{i}_{Y-Z}\om)|_N+\al|_N=(\mathbf{i}_Y\om)|_N$.
\end{proof}

\begin{lemma}\label{lem2}
Let $f:S\to M$ be an embedding and $Y\in\X(M)$.
If $\int_S f^*(\pounds_Yh)\mu =0$ for all functions $h\in\F(M)$,
then the vector field $Y$ is tangent to the
submanifold $f(S)$ of $M$, \ie there exists a vector field $Z\in\X(S)$ with $Y\o f=Tf\o Z$,
which means that the vector fields $Y$ and $Z$ are $f$-related.
\end{lemma}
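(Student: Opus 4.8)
The plan is to argue by contradiction and to feed the hypothesis only the very special functions produced by Lemma \ref{function}. Write $N=f(S)$, which is a submanifold of $M$ since $f$ is an embedding. For any $h\in\F(M)$ one has $f^*(\pounds_Yh)(s)=\dd h_{f(s)}(Y(f(s)))$, so the hypothesis reads $\int_S\dd h_{f(s)}(Y(f(s)))\,\mu=0$ for every $h$. By Lemma \ref{function} (and the observation following it), every section $\al\in\Ga(T^*M|_N)$ vanishing on $TN$ arises as $(\dd\tilde h)|_N$ for a suitable $\tilde h\in\F(M)$: the lemma produces $h$ on the tubular neighborhood with $(\dd h)|_N=\al$, and one extends $h$ to $\tilde h\in\F(M)$ agreeing with $h$ near $N$. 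Evaluating the hypothesis on such $\tilde h$ yields the reduced condition
\[
\int_S\langle\al_{f(s)},Y(f(s))\rangle\,\mu=0\quad\text{for every }\al\in\Ga(T^*M|_N)\text{ vanishing on }TN.
\]
Observe that this integrand depends only on the component of $Y$ transverse to $N$.

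First I would suppose, for contradiction, that $Y$ fails to be tangent to $N$ at some point $x_0=f(s_0)$. Then $Y(x_0)\notin T_{x_0}N$, so there is a covector $\be_0\in(T_{x_0}N)^\circ$ with $\langle\be_0,Y(x_0)\rangle>0$. The heart of the argument is then to build a single conormal covector field whose pairing with $Y$ has a definite sign. Extend $\be_0$ to a smooth local section of the annihilator bundle $(TN)^\circ$ over a neighborhood of $x_0$ in $N$; by continuity the pairing with $Y$ stays positive on a small neighborhood. Multiply by a nonnegative bump function $\chi$ on $S$ with $\chi(s_0)>0$ and support contained in that neighborhood, and extend by zero to all of $N$. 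This gives a global section $\al\in\Ga(T^*M|_N)$ vanishing on $TN$, for which the integrand $\langle\al_{f(s)},Y(f(s))\rangle=\chi(s)\langle(\text{ext }\be_0)_{f(s)},Y(f(s))\rangle$ is $\ge 0$ on all of $S$ and strictly positive near $s_0$. Hence $\int_S\langle\al_{f(s)},Y(f(s))\rangle\,\mu>0$, contradicting the reduced condition. Therefore $Y(f(s))\in T_{f(s)}N$ for every $s\in S$.

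Finally I would upgrade pointwise tangency to $f$-relatedness. Since $f$ is an embedding, $T_sf:T_sS\to T_{f(s)}N$ is a linear isomorphism for each $s$, so setting $Z(s):=(T_sf)^{-1}(Y(f(s)))$ defines a section of $TS$; its smoothness follows from that of $Y\circ f$ and of the inverse of $Tf$ along the embedded $N$. By construction $Y\circ f=Tf\circ Z$, i.e. $Y$ and $Z$ are $f$-related.

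The main obstacle I anticipate is precisely the sign-definite construction of $\al$: one must exhibit a single conormal field along $N$ that pairs nonnegatively with $Y$ over all of $S$ while being strictly positive near $s_0$, so that the integral cannot vanish. The localization via $\chi$ together with the transversality of $Y$ at $x_0$ is exactly what makes this possible; everything else—the reduction of the hypothesis through Lemma \ref{function} and the passage from pointwise tangency to $f$-relatedness—is routine.
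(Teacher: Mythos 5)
Your proof is correct, but it takes a different route from the paper's. The paper argues directly in an adapted coordinate chart: assuming (by contradiction) that $Y$ is not tangent to $S$ at some point, it chooses coordinates $(x_1,\dots,x_k,y_{k+1},\dots,y_n)$ in which $S$ is locally $\{y=0\}$ and $Y$ is rectified to $\partial_{y_n}$, then writes down the explicit test function $h_0(x,y)=y_n\,b_1(x)b_2(y)$ built from bump functions, so that $\pounds_Y h|_S$ is the nonnegative bump $b_1$ and the integral is strictly positive. You instead reuse Lemma \ref{function} (which the paper invokes only in Lemma \ref{lem1} and Lemma \ref{hard}) to convert the hypothesis into the statement that $\int_S\langle\al\circ f,Y\circ f\rangle\,\mu=0$ for every conormal section $\al\in\Ga\bigl((TN)^\circ\bigr)$, and then produce a sign-definite conormal section by pointwise linear algebra plus a bump-function localization. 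The two proofs share the same engine — contradiction via a test object whose pairing with $Y$ is nonnegative on $S$ and positive near the bad point — but your version is coordinate-free and avoids the (glossed-over) step of simultaneously flattening $S$ and rectifying $Y$ in one chart, at the cost of routing through the tubular-neighborhood lemma and a cutoff extension of the resulting fiberwise-linear function to all of $M$. A further small merit of your write-up is that you address the smoothness of $Z=(Tf)^{-1}\circ(Y\circ f)$ explicitly, which the paper leaves implicit in its reduction to the case where $f$ is an inclusion.
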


\begin{proof}
Since $f$ is an embedding, without loss of generality we can assume that $f$ is the inclusion of a submanifold $S$ of $M$. We have to show that if $\int_S (\pounds_Yh)\mu =0$ for all functions $h\in\F(M)$, then $Z=Y|_S\in\X(S)$.

We assume by contradiction that there exists $s\in S$ with
$Y(s)$ not tangent to $S$. We consider a coordinate chart
$u:U\to(-1,1)^n$ on $M$ centered at $s$ with coordinates
$(x_1,\dots,x_k,y_{k+1},\dots,y_n)$, such that points on $S\cap U$
have coordinates $(x_1,\dots,x_k,0,\dots,0)$ and such that locally
$Y|_U\simeq\partial_{y_n}$. With the help of the function
\[
h_0:U\to\RR,\quad h_0(x_1,\dots,x_k,y_{k+1},\dots,y_n)= y_n
b_1(x_1,\dots,x_k)b_2(y_{k+1},\dots, y_{n}),
\]
where $b_1:(-1,1)^k\to \mathbb R$ and $b_2:(-1,1)^{n-k}\to\mathbb R$
are two bump functions identically 1 around zero, we build a smooth
function $h$ on $M$ which is zero outside $U$. The computation
$\partial_{y_n}h_0(x,y)=b_1(x)b_2(y)+y_nb_1(x)\partial_{y_n}b_2(y)$
shows that the restriction of $\pounds_Yh$ to $S$ has
support in $S\cap U$ and in this chart it is the bump function
$b_1$. It follows that
\[
\int_S(\pounds_Yh)\mu=\int_{(-1,1)^k}b_1(x_1,\dots,x_k){\rm d}x_1\dots{\rm
d}x_k\ne 0,
\]
a contradiction. We conclude that $Y|_S\in\X(S)$.
\end{proof}

\medskip

Using these two lemmas, we can now state the main result of this section.

\begin{theorem}\label{mawe}
The restriction of the weak dual pair for Euler equation 
to the open subset of embeddings is a dual pair under
the assumption $H^1(S)=0$.
\end{theorem}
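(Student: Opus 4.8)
The plan is to reduce the statement to verifying the two transitivity equalities in \eqref{formal_dual_pair_condition} and then invoke Proposition \ref{zero}. Write $\g_1=\F(M)$ for the left-acting (central extension of the) Hamiltonian algebra with generators $\bar X_h(f)=X_h\o f$, and $\g_2=\Om^{k-2}(S)/\dd\Om^{k-3}(S)$ for the right-acting algebra with generators $\widehat X_\al(f)=Tf\o X_\al$. By the preceding lemma the pair is already a weak dual pair, so the infinitesimal invariances $(\g_2)_M\subset\ker T\hat\J_L$ and $(\g_1)_M\subset\ker T\hat\J_R$ hold; what remains are the reverse inclusions. Since $\Emb(S,M)$ is open in $\F(S,M)$, the tangent space is unchanged, $T_f\Emb(S,M)=\Ga(f^*TM)$, and because $f(S)$ is a compact embedded (hence closed) submanifold every $V_f$ extends to some $Y\in\X(M)$ with $Y\o f=V_f$; the two conditions below will depend only on $V_f$, so this choice is harmless. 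Using that $\om$ is closed, the first variation of the pullback gives $T_f\hat\J_R\cdot V_f=-f^*\pounds_Y\om=-\dd(f^*\ii_Y\om)$, whence
\[
\ker T_f\hat\J_R=\{\,V_f=Y\o f:\ f^*\pounds_Y\om=0\,\},
\]
while from $\langle\hat\J_L(f),h\rangle=\int_S(h\o f)\mu$ one computes $\langle T_f\hat\J_L\cdot V_f,h\rangle=\int_S f^*(\pounds_Y h)\mu$, so that
\[
\ker T_f\hat\J_L=\Big\{\,V_f=Y\o f:\ \textstyle\int_S f^*(\pounds_Y h)\mu=0\ \text{for all}\ h\in\F(M)\,\Big\}.
\]

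For the equality $(\g_1)_M=\ker T\hat\J_R$, I would take $V_f\in\ker T_f\hat\J_R$ and an extension $Y$, so that $f^*\pounds_Y\om=0$. This is exactly the hypothesis of Lemma \ref{lem1} (which uses $H^1(S)=0$ and that $f$ is an embedding), producing a Hamiltonian vector field $X_h$ with $X_h\o f=Y\o f=V_f$, i.e. $V_f=\bar X_h(f)\in(\g_1)_M(f)$. Hence $\ker T\hat\J_R\subset(\g_1)_M$, giving equality.

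For the equality $(\g_2)_M=\ker T\hat\J_L$, I would take $V_f\in\ker T_f\hat\J_L$ with extension $Y$, so $\int_S f^*(\pounds_Y h)\mu=0$ for all $h\in\F(M)$. Lemma \ref{lem2} then yields $Z\in\X(S)$ with $V_f=Tf\o Z$, and it remains to promote $Z$ to an exact divergence free field. Since $f$ is an embedding of the compact $S$, the restrictions $h\o f$ exhaust $\F(S)$; writing $f^*(\pounds_Y h)=\pounds_Z(h\o f)$ and integrating by parts against $\mu$ (using $\int_S\pounds_Z(g\mu)=0$) reduces the kernel condition to $\int_S g\,(\div Z)\,\mu=0$ for all $g\in\F(S)$, forcing $\div Z=0$, i.e. $\ii_Z\mu$ closed. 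As $S$ is compact and oriented, $H^1(S)=0$ is equivalent to $H^{k-1}(S)=0$, so $\ii_Z\mu$ is in fact exact and $Z=X_\al$ for some potential $\al$. Thus $V_f=Tf\o X_\al=\widehat X_\al(f)\in(\g_2)_M(f)$, giving $\ker T\hat\J_L\subset(\g_2)_M$ and hence equality. With both equalities of \eqref{formal_dual_pair_condition} established, Proposition \ref{zero} yields the dual pair.

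The main obstacle I expect is this final transitivity step: Lemma \ref{lem2} only delivers tangency to $f(S)$, and upgrading $Z$ to \emph{exact divergence free} requires two separate inputs — that restriction along the embedding is onto $\F(S)$ (so testing against all $h\in\F(M)$ is as strong as testing against all $g\in\F(S)$, yielding $\div Z=0$), and the cohomological fact $H^{k-1}(S)\cong H^1(S)=0$ (so that closed becomes exact). The other point needing care is the bookkeeping of the first variation of $f\mapsto f^*\om$ and the signs in identifying the two kernels, together with the observation that the chosen extension $Y$ of $V_f$ does not affect either defining condition.
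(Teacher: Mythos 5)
Your proposal is correct and follows essentially the paper's own argument: the same reduction to the two transitivity equalities (your use of Proposition \ref{zero} with \eqref{formal_dual_pair_condition} is equivalent, via \eqref{kernel_of_TJ}, to the paper's use of Remark \ref{g1g2}), the same key Lemmas \ref{lem1} and \ref{lem2}, and the same cohomological step $H^1(S)=0\Rightarrow H^{k-1}(S)=0$ to upgrade divergence free to exact divergence free. The only cosmetic difference is that you compute the kernel conditions by directly differentiating $\hat\J_L$ and $\hat\J_R$ and by ordinary integration by parts, whereas the paper obtains the identical conditions as symplectic orthogonals of infinitesimal generators via the hat calculus.
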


\begin{proof}
First we notice that the equalities
$(\X_{ham}(M,\om))_{\Emb(S,M)}=(\widehat{\X_{ham}(M,\om)})_{\Emb(S,M)}$
and $(\X_{ex}(S,\mu))_{\Emb(S,M)}=
(\widehat{\X_{ex}(S,\mu)})_{\Emb(S,M)}$ hold.
By remark \ref{g1g2}, the conditions
\[
(\X_{ham}(M,\om))_{\Emb(S,M)}=
\left((\X_{ex}(S,\mu))_{\Emb(S,M)}\right)^{\bar\om}
\]
and
\[
(\X_{ex}(S,\mu))_{\Emb(S,M)}=
\left((\X_{ham}(M,\om))_{\Emb(S,M)}\right)^{\bar\om}
\]
guarantee that the weak dual pairs are dual pairs. The
inclusions from left to right are the weak dual pair conditions verified in \eqref{ooo}.
For the other inclusions we need the topological restriction on $S$.

Recall that the action of the diffeomorphism group $\Diff(M)$ on
$\Emb(S,M)$ is infinitesimally transitive \cite{H76}, so the tangent space to
the manifold of embeddings is $T_f\Emb(S,M)=\{\bar Y(f)=Y\o f:Y\in\X(M)\}$.

We start with an arbitrary $\bar Y(f)=Y\o f\in
((\X_{ex}(S,\mu))_{\Emb(S,M)}(f))^{\bar\om}$ with $Y\in\X(M)$. 
This means that for
all $\al\in\Om^{k-2}(S)$,
\begin{align*}
0&=\bar\om \left( \bar Y(f), \hat X_\al(f) \right) 
=\left( \ii_{\hat X_\al}\ii_{\bar Y}\widehat{\om\cdot\mu} \right) (f)
=\widehat{ \left( \ii_Y\om\cdot\ii_{X_\al}\mu \right) }(f)
=\widehat{(\ii_Y\om\cdot\dd\al)}(f)\\
&=\widehat{(\pounds_Y\om\cdot\al)}(f)
=\int_S(f^*\pounds_Y\om)\wedge\al.
\end{align*}
We used here Propositions \ref{pppp} and \ref{five} from the Appendix.
It follows that $f^*\pounds_Y\om=0$,
so by Lemma \ref{lem1} there exists a Hamiltonian vector field $\tilde Y$ on $M$ such that $\tilde Y\o f= Y\o f$.
We can conclude that $Y\o f\in(\X_{ham}(M,\om))_{\Emb(S,M)}(f)$, which shows the inclusion 
\begin{equation*}
\left((\X_{ex}(S,\mu))_{\Emb(S,M)}\right)^{\bar\om}\subset
(\X_{ham}(M,\om))_{\Emb(S,M)}.
\end{equation*}

For the inclusion
\[
\left((\X_{ham}(M,\om))_{\Emb(S,M)}\right)^{\bar\om}\subset
(\X_{ex}(S,\mu))_{\Emb(S,M)}
\]
we start with an arbitrary $\bar Y(f)=Y\o f\in
\left((\X_{ham}(M,\om))_{\Emb(S,M)}(f)\right)^{\bar\om}$, which means
that for all $h\in\F(M)$,
\begin{equation*}
0=\bar\om(\bar Y(f),\bar X_h(f))
=\overline{\om(Y,X_h)}(f)
=-\overline{\pounds_Yh}(f)
=-\int_S f^*(\pounds_Yh)\mu.
\end{equation*}
>From Lemma \ref{lem2} follows that there exists a vector field $Z$ on $S$ with $\bar Y(f)=Y\o f=Tf\o Z=\hat Z(f)$ for $Z\in\X(S)$. 

Now, since $\hat Z(f)\in \left((\X_{ham}(M,\om))_{\Emb(S,M)}(f)\right)^{\bar\om}$, 
for every $h\in\mathcal{F}(M)$ we have
\begin{align*}
0&=\bar\om \left( \hat Z(f),\bar X_h(f) \right) 
=(\ii_{\bar X_h}\ii_{\hat Z}\widehat{\om\cdot\mu})(f)
=-\widehat{(\dd h\cdot\ii_Z\mu)}(f)
=\widehat{h\cdot\pounds_Z\mu}(f)
=\int_S(f^*h)\pounds_Z\mu,
\end{align*}
hence $\pounds_Z \mu =0$ ($f$ is an embedding, so all smooth
functions on $S$ can be written in the form $h\o f$ with
$h\in\F(M)$). This means that $Z$ is a divergence free vector field
on $S$. 
Since the manifold $S$ is oriented, $H^1(S)=0$ implies
$H^{k-1}(S)=0$, so $Z$ is an exact divergence free vector field. We conclude that $\bar Y(f)\in
(\X_{ex}(S,\mu))_{\Emb(S,M)}$.
\end{proof}

\begin{remark}[The case of an exact symplectic form]\label{45}
{\rm There are two variants of the Marsden-Weinstein dual pair in the special case of an exact symplectic 
(hence non-compact) manifold $M$. As we have seen in \S\ref{trois}, in this special case the central extension of $\mathfrak{X}_{ex}(S,\mu)$ is not needed and we have $\J_R(f)=-f^*\om\in\mathfrak{X}_{ex}(S,\mu)^*\simeq\mathbf{d}\Omega^1(S)$
from \eqref{exactj}.
For the left leg we consider as above the left action of the Lie algebra of smooth functions on $M$, as central extension of the Lie algebra of Hamiltonian vector fields, so we get $\hat{\J}_L(f)=f_*\mu$ and the diagram

\begin{picture}(150,100)(-70,0)%
\put(95,75){$(\Emb(S,M),\bar\om)$}

\put(90,50){$\hat{\J}_L$}

\put(160,50){${\J}_R$}

\put(8,15){$\Den_c(M)={\F(M)}^*$}

\put(150,15){${\X_{ex}(S,\mu)}^*=\dd\Om^1(S)$.}

\put(130,70){\vector(-1, -1){40}}

\put(135,70){\vector(1,-1){40}}

\end{picture}\\
The dual pair property follows from Theorem \ref{mawe}.

Another possibility for the left leg is to consider, like in Remark \ref{compsupp}, the left action of the Lie algebra of those compactly supported 
Hamiltonian vector fields on $M$ who admit compactly supported 
Hamiltonian functions. This Lie algebra, denoted by $\X_{ham}^c(M,\om)$, can be identified with the space $\F_c(M)$ of compactly supported functions on $M$, so we have $\J_L(f)=f_*\mu$ and a weak dual pair

\begin{picture}(150,100)(-70,0)%
\put(95,75){$(\Emb(S,M),\bar\om)$}

\put(90,50){${\J}_L$}

\put(160,50){${\J}_R$}

\put(0,15){$\Den(M)={\X_{ham}^c(M,\om)}^*$}

\put(150,15){${\X_{ex}(S,\mu)}^*=\dd\Om^1(S)$.}

\put(130,70){\vector(-1, -1){40}}

\put(135,70){\vector(1,-1){40}}

\end{picture}\\
To show it is a dual pair, one uses $(\X_{ham}(M,\om))_{\Emb(S,M)} 
=(\X_{ham}^c(M,\om))_{\Emb(S,M)}$ in the proof of Theorem \ref{mawe}.
}\end{remark}


\section{Dual pair for EPDiff equation}\label{cinq}

In this Section, we show that the pair of momentum maps associated to the EPDiff equation (\cite{HoMa2004}) form a weak dual pair on $T^*\F(S,M)$ and a dual pair on the subset $T^*\operatorname{Emb}(S,M)^\times$ of $T^*\operatorname{Emb}(S,M)$ consisting of nowhere vanishing one-form densities along embeddings.

The left action of $\operatorname{Diff}(M)$ and the right action of $\Diff(S)$ on $\F(S,M)$ lift to symplectic actions on the cotangent bundle $T^*\F(S,M)$ endowed with the canonical symplectic form $\Om$.
We now compute the cotangent momentum maps associated to these actions. Let $X\in\X(M)$, $Y\in\X(S)$, and $P\in T^*\F(S,M)$. Note that $P$ is a 1-form density on $M$ along the mapping $Q\in\F(S,M)$ given by $Q=\pi_{\F(S,M)}(P)$, where $\pi_{\F(S,M)}:T^*\F(S,M)\to\F(S,M)$ denotes the canonical projection.

Applying the general formula \eqref{cot_momap} for the cotangent momentum map, we compute
\[
\left\langle\J_L(P), X \right\rangle=\left\langle P,\bar X(Q)\right\rangle
=\int_S P\!\cdot\!( X \o Q)=\left\langle\int_S P\de(x-Q), X \right\rangle
\]
and
\[
\left\langle\J_R(P),Y \right\rangle=\left\langle P,\hat Y(Q)\right\rangle
=\int_S P\!\cdot\!(TQ\o  Y )=\langle P\!\cdot\! TQ, Y \rangle.
\]
The momentum maps $\J_L$ and $\J_R$ are thus given by
\begin{equation}\label{trei}
\J_L:T^*\F(S,M)\to\X(M)^*,\quad\mathbf{J}_L(P)=
\int_S P\delta(x- Q),
\end{equation}
and
\begin{equation}\label{patru}
\J_R:T^*\F(S,M)\to\X(S)^*,\quad\mathbf{J}_R(P)= P \!\cdot\!T Q .
\end{equation}
They form the pair of momentum maps  for EPDiff equation considered by \cite{HoMa2004}:

\begin{picture}(150,100)(-70,0)%
\put(85,75){$(T^*\F(S,M),\Om)$}

\put(90,50){$\mathbf{J}_L$}

\put(160,50){$\mathbf{J}_R$}

\put(65,15){$\mathfrak{X}(M)^{\ast}$}

\put(170,15){$\mathfrak{X}(S)^{\ast}$}

\put(130,70){\vector(-1, -1){40}}

\put(135,70){\vector(1,-1){40}}

\end{picture}\\
Since the actions commute, using Corollary \ref{finocchio} we obtain the following result.

\begin{lemma}
The momentum maps $\J_R$ and $\J_L$ determine a weak dual pair. Moreover $\mathbf{J}_L$ is invariant under the right action of
$\operatorname{Diff}(S)$ and $\mathbf{J}_R$ is invariant under the
left action of $\operatorname{Diff}(M)$.
\end{lemma}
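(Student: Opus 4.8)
The plan is to obtain the weak dual pair property as a direct application of Corollary~\ref{finocchio}, and to read off the two invariance statements from the equivariance of the infinitesimal generators that underlies its proof.

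First I would record that the left $\Diff(M)$-action $f\mapsto\ph\o f$ and the right $\Diff(S)$-action $f\mapsto f\o\ps$ on $\F(S,M)$ commute, since $(\ph\o f)\o\ps=\ph\o(f\o\ps)$. Both actions lift to the cotangent bundle $(T^*\F(S,M),\Om)$, and by construction their cotangent momentum maps are precisely the maps $\J_L$ and $\J_R$ of \eqref{trei} and \eqref{patru}. Corollary~\ref{finocchio}, applied with $Q=\F(S,M)$, $G_1=\Diff(M)$ and $G_2=\Diff(S)$, then yields at once that $\J_L$ and $\J_R$ form a weak dual pair.

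For the invariance of $\J_L$ under the right $\Diff(S)$-action, the key point is that the generator $\bar X(f)=X\o f$ of the left action is $\Diff(S)$-equivariant: since the tangent-lifted right action sends a vector field $V_f$ along $f$ to $V_f\o\ps$ along $f\o\ps$, one has
\[
\bar X(f\o\ps)=X\o f\o\ps=(X\o f)\o\ps,
\]
so $\bar X$ is $\ps$-related to itself. Combined with the defining relation $\langle\J_L(P),X\rangle=\langle P,\bar X(Q)\rangle$ from \eqref{cot_momap} and the fact that the cotangent lift of $\ps$ pairs dually against its tangent lift, this equivariance translates into $\J_L(\ps\cdot P)=\J_L(P)$. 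The computation is entirely symmetric for $\J_R$: the generator $\hat Y(f)=Tf\o Y$ of the right action satisfies $\hat Y(\ph\o f)=T(\ph\o f)\o Y=T\ph\o(Tf\o Y)=T\ph\o\hat Y(f)$, hence is equivariant under the left action, and therefore $\J_R$ is $\Diff(M)$-invariant.

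The one step that deserves care is the passage from equivariance of the generating vector field to invariance of the momentum map in the Fr\'echet setting. Concretely, in the abstract cotangent model $\langle g\cdot\alpha_q,w\rangle=\langle\alpha_q,Tg^{-1}\cdot w\rangle$, invariance of $\J_L$ reduces to the identity $Tg^{-1}\cdot\bigl(\bar X(g\cdot Q)\bigr)=\bar X(Q)$, which is exactly the equivariance above; but to make this rigorous here one must track the transformation of the $S$-density factor in the pairing $\langle P,\cdot\rangle=\int_S P\cdot(\,\cdot\,)$, recalling that elements of $\Diff(S)$ need not be volume preserving. This bookkeeping is the only genuinely technical ingredient; once it is in place, both invariances, and with them the statement, follow formally.
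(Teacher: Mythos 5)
Your proposal is correct and follows essentially the same route as the paper: the paper's proof is precisely the observation that the left $\Diff(M)$- and right $\Diff(S)$-actions on $\F(S,M)$ commute, followed by an application of Corollary~\ref{finocchio}, whose own proof contains exactly the equivariance-of-generators argument you spell out to obtain the two invariance statements. Your extra remark about tracking the density factor is a fair point of rigor, but it is handled automatically by working with the abstract cotangent lift, as the paper does.
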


In the rest of this section we consider the cotangent bundle over the open subset $\Emb(S,M)$ of embeddings. We fix a volume form $\mu $ on $S$
and we identify elements in the regular part of $T^*\Emb(S,M)$ with $1$-form densities $P \mu $, where $P:S \to T^*M$ is a smooth map over an embedding $Q=\pi\o P$, where $\pi:T^*M\to M$ denotes the canonical projection. 
Let $T^*\Emb(S,M)^\times $ denote the open subset of the regular part of $T^*\Emb(S,M)$
consisting of those $1$-form densities $P\mu$ which are everywhere non-zero on $S$,
\ie $P(s) \neq 0$ for all $s\in S$.

The restrictions of the momentum maps to $T^*\Emb(S,M)^\times $ 
will be denoted again by $\J_L$ and $\J_R$.
We show below that the action of $\Diff(S)$ is transitive on level sets of $\mathbf{J}_L$
and that the action of $\Diff(M)$ is transitive on connected components of level sets of $\mathbf{J}_R$.
Thus both actions are infinitesimally transitive and we use Proposition \ref{zero} to prove that the restriction 
of the weak EPDiff dual pair to $T^*\Emb(S,M)^\times $  is a dual pair. 

\begin{proposition}\label{diffs}
The diffeomorphism group $\Diff(S)$ acts transitively from the right on the level sets of the left momentum map $\J_L:T^*\Emb(S,M)^\times \to \X(M)^*$.
\end{proposition}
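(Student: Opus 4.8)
The plan is to recognize the left momentum map as a pushforward and to recover the image submanifold from it. For $P\mu$ over an embedding $Q=\pi\o P$, the value $\J_L(P\mu)$ is the compactly supported distributional $1$-form density $Q_*(P\mu)$ on $M$, characterized by $\langle\J_L(P\mu),X\rangle=\int_S \big(P\cdot(X\o Q)\big)\mu$ for $X\in\X(M)$. So I would fix a level set, take two elements $P_1\mu$ over $Q_1$ and $P_2\mu$ over $Q_2$ with $\J_L(P_1\mu)=\J_L(P_2\mu)=:J$, and first determine the support of $J$. This is exactly where the restriction to $T^*\Emb(S,M)^\times$ enters: since each $Q_i$ is an embedding and each $P_i$ is \emph{nowhere zero}, the support of $J$ is precisely the image submanifold $N_i:=Q_i(S)$. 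Indeed, a vector field $X$ supported away from $Q_i(S)$ pairs trivially, while at any $x_0=Q_i(s_0)$ the non-vanishing of $P_i(s_0)$ lets me choose a locally supported $X$ with $\langle J,X\rangle\neq 0$. Hence $N_1=N_2=:N$.

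Next I would produce the diffeomorphism. Since $Q_1,Q_2$ are embeddings with the same image $N$, the map $\ps:=Q_1^{-1}\o Q_2$ is a genuine element of $\Diff(S)$ (the composition of the diffeomorphism $Q_2:S\to N$ with the inverse of $Q_1:S\to N$), and $Q_2=Q_1\o\ps$. Therefore the right action of $\ps$ carries the base point $Q_1$ to $Q_2$, so $\ps\cdot(P_1\mu)$ is a $1$-form density lying over $Q_2$. By the invariance of $\J_L$ under the right $\Diff(S)$-action (the Lemma just above), one has $\J_L(\ps\cdot(P_1\mu))=\J_L(P_1\mu)=J=\J_L(P_2\mu)$.

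It remains to check $\ps\cdot(P_1\mu)=P_2\mu$, and here I would use injectivity of the pushforward along a fixed embedding. Both $\ps\cdot(P_1\mu)$ and $P_2\mu$ are $T^*M$-valued densities along the \emph{same} embedding $Q_2$ with equal image $Q_{2*}(\,\cdot\,)=J$; so if $\nu,\nu'$ are two such densities with $\int_S \langle\nu-\nu',X\o Q_2\rangle=0$ for all $X\in\X(M)$, I claim $\nu=\nu'$. This is a pointwise localization argument in the spirit of Lemma \ref{lem2}: in a submanifold chart one tests against $X$ of the form (bump)$\times$(coordinate field) concentrating at $Q_2(s_0)$, and since $Q_2$ is an embedding these restrict to bumps on $S$ concentrating at $s_0$, forcing each component of $\nu-\nu'$ to vanish at $s_0$. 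Thus $\ps\cdot(P_1\mu)=P_2\mu$, which gives transitivity on the whole level set (not merely on connected components).

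The main obstacle is the very first step: without the nowhere-vanishing hypothesis the support of $J$ could be a proper subset of $Q(S)$, and the image submanifold $N$ would not be recoverable, so this is precisely where passing to $T^*\Emb(S,M)^\times$ is indispensable. By contrast, the smoothness of $\ps$ and the injectivity of the pushforward are routine localization arguments, and the invariance of $\J_L$ needed to close the argument has already been established.
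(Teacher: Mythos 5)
Your proposal is correct and follows essentially the same route as the paper: the nowhere-vanishing hypothesis forces the two embeddings to have the same image, which yields $\ps\in\Diff(S)$ with $Q_2=Q_1\o\ps$, and a localization argument then identifies the two densities. The only differences are matters of presentation and rigor: where you invoke the right-invariance of $\J_L$ and test with smooth bump vector fields, the paper performs the change of variables with the Jacobian $J(\ps)$ by hand and tests with distributional vector fields $X=v_x\de_x$, so your version is, if anything, slightly more careful at that step.
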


\begin{proof}
Let $P\mu,P'\mu\in T^*\Emb(S,M)^\times $ be two $1$-form densities in the same level set of $\mathbf{J}_L$, with footpoints $Q,Q'\in\Emb(S,M)$.
This means that the identity
\begin{equation}\label{pxq}
\int_SP\cdot( X \o Q)\mu 
=\int_SP'\cdot( X \o Q')\mu 
\end{equation}
holds for each vector field $X\in\X(M)$. 
Since $P(s)\ne 0$ and $P'(s)\ne 0$ for all $s\in S$, 
we conclude that the embeddings $Q$ and $Q'$ have the same image in  $M$. This implies the existence of a diffeomorphism $\ps\in\Diff(S)$ with $Q'=Q\o\ps$. Indeed, it suffices to choose $ \psi := (Q') ^{-1} \circ Q$, where $Q$ is considered as a diffeomorphism $Q:S\rightarrow Q(S)$ and where $ (Q') ^{-1}$ denotes the inverse of the diffeomorphism $Q':S \rightarrow Q(S)=Q'(S)$.

Denoting by $J(\ps)\in\F(M)$ the Jacobian function of $\ps$ with respect to the volume form $\mu$, \ie $\ps^*\mu=J(\ps)\mu$,
we compute
\[
\int_SP\!\cdot\!( X \o Q)\mu=\int_S(P\o\ps)\!\cdot\!(X\o Q')\ps^*\mu
=\int_S(P\o\ps)J(\ps)\!\cdot\!(X\o Q')\mu,
\]
and we rewrite the identity \eqref{pxq} as
\[
\int_S\left((P\o\ps)J(\ps)-P'\right)\!\cdot\!(X\o Q')\mu.
\]
Plugging in a distributional vector field (current) 
$X=v_x\de_x$ with support in 
$x=Q'(s)\in M$ for $s\in S$, where $v_x\in T_xM$,
we obtain $P'(s)=P(\ps(s))J(\ps)(s)$.
Since $s\in S$ was arbitrary, this implies $P'\mu=(P\o\ps)\ps^*\mu=\ps\cdot(P\mu)$,
showing the transitivity of the $\Diff(S)$ action on level sets of $\mathbf{J}_L$.
\end{proof}

\medskip

Let $\Diff(M,N)$ be the group of diffeomorphisms of $M$ fixing pointwise the submanifold $N$ of $M$.
It is a Lie group with Lie algebra $\X(M,N)$,
the Lie algebra of all vector fields vanishing at points of $N$.
The group $\Diff(M,N)$ acts from the left on the vector space of $1$-forms on $M$ along $N$, 
\ie on the space of sections $\Ga(T^*M|_N)$, by
$(\ph\cdot P)(x)=P(x)\o T_x\ph^{-1}$. This makes sense 
since $\ph(x)=x$ for all $x\in N$.
  
The infinitesimal action of $\X(M,N)$ on $\Ga(T^*M|_N)$
can be written as
\begin{equation}\label{nab}
X\cdot P=- \pounds _XP= -\nabla _X P- P\o\nabla X=- P\o\nabla X,
\end{equation}
where $\nabla X$ is any covariant derivative of the vector field $X$, all the points in $N$ being zeros of $X$.

\begin{remark}\label{egal}
{\rm Note that if $P\in\Ga(T^*M|_N)$, then $P\mu\in T_i^*\Emb(N,M)$, where $i:N\to M$ the inclusion and $\mu$ a fixed volume form on $N$. It is easy to see that
the action of the subgroup $\Diff(M,N) \subset \Diff(M)$ introduced above is the restriction 
of the cotangent lifted action of $\Diff(M)$ on $T^*\Emb(N,M)$.
}
\end{remark}

Given a $1$-form $\al\in\Om^1(N)$, we consider the affine subspace
\[
\Ga_\al(T^*M|_N):=\{P\in\Ga(T^*M|_N) : P|_{TN}=\al\}\subset \Ga(T^*M|_N),
\]
with directing linear subspace the conormal bundle to the submanifold $N$ of $M$
\[
\Ga_0(T^*M|_N)=\{P\in\Ga(T^*M|_N):P|_{TN}=0\}.
\]
One observes that $\Ga_\al(T^*M|_N)$
is an invariant submanifold for the $\Diff(M,N)$ action on $\Ga(T^*M|_N)$. Another invariant submanifold is the open subset
$\Ga(T^*M|_N)^\times $ of sections in $\Ga(T^*M|_N)$ which are everywhere on $N$ non-zero. Therefore, their intersection
\[
\Ga_\al(T^*M|_N)^\times: =\left\{P\in\Ga(T^*M|_N)^\times : P|_{TN}=\al\right\},
\]
is also a $\Diff(M,N)$ invariant submanifold of $\Ga(T^*M|_N)$ and we have the following result.

\begin{lemma}\label{hard}
For any $\al\in\Omega^1(N)$, the action of the group $\Diff(M,N)$ on the manifold $\Ga_\al(T^*M|_N)^\times$ is infinitesimally transitive. Moreover, the action is transitive on connected components of $\Ga_\al(T^*M|_N)^\times$.
\end{lemma}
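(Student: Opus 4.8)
The plan is to prove the two transitivity statements by the usual scheme: first establish \emph{infinitesimal} transitivity by a pointwise-to-global construction that crucially uses the nowhere-vanishing of $P$, and then upgrade this to transitivity on connected components by integrating a time-dependent vector field in $\X(M,N)$.

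First I would identify the tangent space. Since $\Ga_\al(T^*M|_N)^\times$ is an open subset of the affine space $\Ga_\al(T^*M|_N)$ whose directing linear space is the conormal bundle $\Ga_0(T^*M|_N)$, we have $T_P\,\Ga_\al(T^*M|_N)^\times=\Ga_0(T^*M|_N)$. By \eqref{nab} the infinitesimal generator of $X\in\X(M,N)$ at $P$ is $X\cdot P=-P\o\nabla X$, and since $X$ vanishes along $N$ this covector automatically annihilates $TN$, so it indeed lands in $\Ga_0(T^*M|_N)$. Thus infinitesimal transitivity amounts to showing that every $\de P\in\Ga_0(T^*M|_N)$ is of the form $-P\o\nabla X$ for some $X\in\X(M,N)$. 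Here I would use the hypothesis $P\ne0$: because $P(x)\ne0$ for all $x\in N$, the affine subbundle $\{V\in T_xM:\langle P(x),V\rangle=1\}$ of $TM|_N$ is nonempty over each point, hence admits a smooth section $V\in\Ga(TM|_N)$; fix an extension $\tilde V\in\X(M)$. Given $\de P\in\Ga_0(T^*M|_N)$, its values vanish on $TN$, so Lemma \ref{function} (after extending by a cutoff) provides a function $g\in\F(M)$ with $g|_N=0$ and $(\dd g)|_N=\de P$. Setting $X:=-g\tilde V$ gives $X\in\X(M,N)$ since $g|_N=0$, and because $g$ vanishes on $N$ one computes $\nabla X|_N=-\de P\otimes V$, whence $-P\o\nabla X=\de P$ using $\langle P,V\rangle\equiv1$. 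This establishes infinitesimal transitivity.

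For transitivity on connected components I would take $P,P'$ in the same connected component. As a connected open subset of the affine Fr\'echet space $\Ga_\al(T^*M|_N)$, this component is smoothly path-connected, so I choose a smooth path $P_t$, $t\in[0,1]$, from $P$ to $P'$ inside $\Ga_\al(T^*M|_N)^\times$, with $\dot P_t\in\Ga_0(T^*M|_N)$. Running the previous construction with parameters—choosing $V_t$ and $g_t$ smoothly in $t$—yields a smooth time-dependent vector field $X_t\in\X(M,N)$ with $X_t\cdot P_t=\dot P_t$ for all $t$. Integrating $X_t$ gives an isotopy $\ph_t$; since each $X_t$ vanishes pointwise on $N$, its flow fixes $N$ pointwise, so $\ph_t\in\Diff(M,N)$, and by uniqueness of solutions of $\dot P_t=X_t\cdot P_t$ one gets $\ph_t\cdot P=P_t$, hence $P'=\ph_1\cdot P$ lies in the $\Diff(M,N)$-orbit of $P$ (a sign in $X_t$ may have to be adjusted according to the action convention).

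The main obstacle is the infinitesimal transitivity step: one must realize an arbitrary conormal variation $\de P$ as the linearization datum of a vector field that simultaneously vanishes on all of $N$ and has the prescribed normal derivative. The nowhere-vanishing of $P$ is exactly what makes this solvable, letting one split off a direction $V$ pairing to $1$ with $P$ and reduce the problem to producing a function with prescribed zero set and differential along $N$, which is supplied by Lemma \ref{function}. A secondary point to watch is the parametrized smoothness and the existence of the flow in the last step; this is unproblematic since $M$ is finite dimensional and $X_t$ may be taken supported in a fixed tubular neighborhood of $N$.
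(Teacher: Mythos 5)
Your proof is correct and follows essentially the same route as the paper: both arguments seek the vector field in the form $X=(\text{function vanishing on }N)\cdot(\text{extended vector field})$, invoke Lemma \ref{function} to produce that function with prescribed differential along $N$, use the nowhere-vanishing of $P$ to close the loop, and then upgrade to transitivity on connected components by integrating a smoothly $t$-dependent $X_t\in\X(M,N)$. The only cosmetic difference is where the normalization sits: the paper takes $Y|_N=P^\sharp$ and applies Lemma \ref{function} to $\la=P'/\|P\|^2$, while you put the factor into a section $V$ with $\langle P,V\rangle=1$ and apply the lemma to $\de P$ directly.
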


\begin{proof} We fix a Riemannian metric $M$ and denote by $ \nabla $, $\sharp$, and $\|\,\|$ the Levi-Civita covariant derivative, the sharp operator, and the norm associated to the Riemannian metric, respectively.
We identify the tangent space of the affine subspace $\Ga_\al(T^*M|_N)$ at an arbitrary point 
$P\in\Ga_\al(T^*M|_N)^\times $
with the conormal bundle $\Ga_0(T^*M|_N)$.

Taking into consideration the infinitesimal action \eqref{nab}, 
we need to show that given $P\in\Ga_\al(T^*M|_N)^\times $, for every 
$P'\in\Ga_0(T^*M|_N)$ there exists $X\in\X(M,N)$ such that 
$P'=P\o\nabla X$. We will find the vector field in the form $X=fY$, where $f\in\F(M)$ vanishes on $N$, and $Y\in\X(M)$ restricted to the submanifold $N$ is $P^\sharp$.

Consider the section 
\[
\la:=\frac{P'}{\| P\|^2}\in\Ga_0(T^*M|_N),
\]
where $\| P(x)\|^2\ne 0$ because $P(x)\ne 0$
for all $x\in N$.
Lemma \ref{function} can be applied to the restriction of $\la$ to the normal bundle over the submanifold $N\subset M$ 
(viewed as a tubular neighborhood of $N$)
because the restriction of $\la$ to $TN$ vanishes.
We obtain from $\la$ a function $f$ on the normal bundle $E$, linear on each fiber, with the property that
the differential of $f$ along $N$ is $\la$,
\ie $(\dd f)|_{N}=\la$ as sections of $T^*M|_{N}$.
Using the tubular neighborhood $E$ of $N$ we build a smooth
function on $M$, identical to $f$ on a neighborhood of $N$, also denoted by $f$.

We define $X=fY\in\X(M,N)$.
The computation
\[
(\nabla X)|_N=(f\nabla Y)|_N+((\dd f) Y)|_N
=(\dd f)|_N Y|_N=\la P^\sharp
\]
implies that 
\[
P\o (\nabla X)|_N=P\o(\la P^\sharp)=\la\| P\|^2=P'.
\]
This ensures the infinitesimal transitivity of the $\X(M,N)$ action
on $\Ga_\al(T^*M|_N)^\times $.

For the transitivity we consider $P_0$ and $P_1$ in the same connected component of $\Ga_\al(T^*M|_N)^\times $.
Let $P_t$ be a curve in $\Ga_\al(T^*M|_N)^\times $ connecting them, and let $ \dot P_t\in \Ga_0(T^*M|_N)$ be 
its derivative. From the infinitesimal transitivity above we get $X_t\in\X(M,N)$ with $\dot  P_t=X_t\cdot P_t$.
Since the construction of the vector field involves only the Riemannian metric, tubular neighborhoods and extensions,
the vector fields $X_t$ can be chosen smoothly dependent on $t$. 
Let $\ph_t\in\Diff(M,N)$ be the isotopy starting at the identity defined by the time dependent vector field $X_t\in\X(M,N)$.
Then $P_t=\ph_t\cdot P_0$, in particular $P_1=\ph_1\cdot P_0$, and this solves the transitivity.
\end{proof}

\medskip

Using this lemma, we obtain below the analogue of Proposition \ref{diffs} for the right momentum map.

\begin{proposition}\label{diffm}
The left action of the diffeomorphism group $\Diff(M)$ on connected components of the level sets of the right momentum map 
$\J_R:T^*\Emb(S,M)^\times \to\X(S)^*$ is transitive. In particular the action is infinitesimally transitive 
on the level sets of $\J_R$.
\end{proposition}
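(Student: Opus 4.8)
The plan is to prove the two assertions in turn: first the infinitesimal transitivity on level sets of $\J_R$, and then, by an integration argument modelled on the end of the proof of Lemma \ref{hard}, the transitivity on connected components. The key structural fact I would use at the outset is that $\J_R$ is invariant under the left $\Diff(M)$-action (established in the Lemma preceding Proposition \ref{diffs}), so that every infinitesimal generator of this action is automatically tangent to the level sets of $\J_R$. Hence the content of infinitesimal transitivity is the reverse inclusion: every tangent vector to a level set must be shown to arise as such a generator.

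Fix $P\mu\in T^*\Emb(S,M)^\times$ with footpoint the embedding $Q=\pi\o P$ and image $N=Q(S)$, and let $(\dot Q,\dot P)\in\ker T\J_R$ be a tangent vector to the level set; differentiating $\J_R(P\mu)=(P\cdot TQ)\mu$ shows that this condition reads $\dot P\cdot TQ+P\cdot T\dot Q=0$. Since $\Diff(M)$ acts infinitesimally transitively on $\Emb(S,M)$ \cite{H76}, I would first choose $X_0\in\X(M)$ with $X_0\o Q=\dot Q$. Its infinitesimal generator lies in $\ker T\J_R$ by invariance, so subtracting it yields a fixed-footpoint variation $(0,\dot P')$ which still lies in $\ker T\J_R$. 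With $\dot Q=0$ the constraint collapses to $\dot P'\cdot TQ=0$, so $\dot P'$ annihilates $TN$ and is a section of the conormal bundle, $\dot P'\in\Ga_0(T^*M|_N)$. At this point the infinitesimal transitivity part of Lemma \ref{hard} applies: since $P\in\Ga_\al(T^*M|_N)^\times$ with $\al=P|_{TN}$, the conormal variation $\dot P'$ is realized as $X_1\cdot P$ for some $X_1\in\X(M,N)$ through the infinitesimal action \eqref{nab}. As $X_1$ vanishes on $N$, its generator has no footpoint component, and therefore $(\dot Q,\dot P)$ is exactly the generator of $X_0+X_1\in\X(M)$. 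This proves infinitesimal transitivity on the level sets of $\J_R$.

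For the transitivity on connected components I would argue as in the last part of Lemma \ref{hard}. Given $P_0\mu$ and $P_1\mu$ in the same connected component of a level set, I choose a smooth path $P_t\mu$ in that level set joining them, with derivative $\dot P_t\mu\in\ker T\J_R$. Applying the construction of the previous paragraph pointwise in $t$ produces a time-dependent vector field $X_t\in\X(M)$ whose infinitesimal generator equals $\tfrac{d}{dt}(P_t\mu)$. Integrating $X_t$ to an isotopy $\ph_t\in\Diff(M)$ starting at the identity then gives $\ph_t\cdot(P_0\mu)=P_t\mu$, so that $\ph_1\cdot(P_0\mu)=P_1\mu$, which is precisely the asserted transitivity on the connected component.

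The main obstacle will be the smooth-in-$t$ choice of the vector fields $X_t$ needed for this integration step. One must check that both the isotopy-extension field $X_0(t)$ matching the moving footpoint $\dot Q_t$ and the field $X_1(t)$ supplied by Lemma \ref{hard} can be arranged to depend smoothly on $t$; the latter is delicate because it is built from the moving submanifold $N_t=Q_t(S)$, a fixed Riemannian metric, and tubular-neighbourhood identifications. This is exactly the situation already handled in the transitivity part of Lemma \ref{hard}, where it is observed that since the construction involves only the metric, tubular neighbourhoods, and extensions, the vector fields may be chosen smoothly dependent on $t$; I would invoke that parametrized version here.
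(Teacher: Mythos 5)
Your proposal is correct in substance, but it proceeds in the opposite order from the paper's proof. You establish infinitesimal transitivity first --- decomposing a kernel vector $(\dot Q,\dot P)$ of $T\J_R$ into the generator of a field $X_0$ matching the footpoint velocity (via infinitesimal transitivity of $\Diff(M)$ on $\Emb(S,M)$ and $\Diff(M)$-invariance of $\J_R$) plus a conormal variation handled by the infinitesimal part of Lemma \ref{hard} --- and then integrate a time-dependent field $X_t=X_0(t)+X_1(t)$ to obtain transitivity on connected components. The paper does the reverse: it proves group-level transitivity directly and reads off the infinitesimal statement as a consequence. Concretely, given a path $P_t$ in a level set, the paper first straightens the footpoints by an ambient isotopy $\ph_t$ from \cite{H76} (so $Q_t=\ph_t\circ Q_0$), then uses invariance of $\J_R$ to see that $\ph_t^{-1}\cdot P_t$ is a path of sections in $\Ga_\al(T^*M|_N)^\times$ over the \emph{fixed} submanifold $N=Q_0(S)$ lying in a single connected component, and finally invokes the already-proved group-level transitivity of Lemma \ref{hard} to produce $\ph_2\in\Diff(M,N)$ with $\ph_1\circ\ph_2$ carrying $P_0$ to $P_1$. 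The practical difference sits exactly at the point you flag as the main obstacle: your integration step requires the Lemma \ref{hard} construction to depend smoothly on $t$ while the submanifold $N_t=Q_t(S)$ itself moves, which is a genuinely stronger, parametrized version of the smooth-choice claim proved there only for a fixed $N$ (moving normal bundles, moving tubular neighborhoods, moving extensions). The paper's ordering avoids this entirely, since conjugating by the ambient isotopy before invoking Lemma \ref{hard} absorbs all $t$-dependence into Hirsch's isotopy extension theorem and the fixed-$N$ statement. Your argument can be repaired in the same way without losing anything: keep your direct infinitesimal-transitivity proof (which is a nice addition, since the paper only asserts that part ``in particular''), but for the group statement pass to $\tilde P_t=\ph_t^{-1}\cdot P_t$, observe that its velocity is conormal over the fixed $N$, and apply Lemma \ref{hard} as stated.
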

\begin{proof}
Let $P_t$ be a smooth path connecting two arbitrary elements $P_0$ and $P_1$ in the same connected component of a level set of $\J_R$. Let $Q_t=\pi\o P_t\in\Emb(S,M)$.
By the transitivity of the $\Diff(M)$ action on connected components of the manifold of embeddings, there exists a smooth path $\ph_t\in\Diff(M)$ such that $Q_t=\ph_t\o Q_0$ \cite{H76}. Hence all $\ph_t^{-1}\cdot P_t\in T^*\Emb(S,M)^\times$ have the same footpoint $Q_0$.
Moreover, since $\J_R$ is invariant under the $\Diff(M)$ action, the smooth path $\ph_t^{-1}\cdot P_t$ connects $P_0$ and $P_2=\ph_1^{-1}\cdot P_1$ in the same level set of $\J_R$.
This means that for any vector field $ X \in\X(S)$,
\begin{equation}\label{cc}
\int_S P_0 \cdot (TQ_0\o X )\mu 
=\int_S P_2 \cdot (TQ_0\o X )\mu .
\end{equation}

Elements in $T_{Q_0}^*\Emb(S,M)^\times$ can be viewed as sections in 
$\Ga(T^*M|_N)$ for $N=Q_0(S)$. We conclude from \eqref{cc} that $P_0|_{TN}=P_2|_{TN}=:\al\in\Om^1(N)$, so 
$P_0,P_2\in\Ga_\al(T^*M|_N)^\times $.
They belong to the same connected component of $\Ga_\al(T^*M|_N)^\times $ because 
we can write $\ph_t^{-1}\cdot P_t$ instead of $P_2$ in \eqref{cc}.
By the transitivity property in  Lemma \ref{hard}, we get a diffeomorphism $\ph_2\in\Diff(M,N)$ 
such that $P_2=\ph_2\cdot P_0$.
Now $P_1=\ph_1\cdot P_2=(\ph_1\o\ph_2)\cdot P_0$ shows the requested transitivity of the $\Diff(M)$ action.
\end{proof}

\medskip

Using the transitivity results obtained in Propositions \ref{diffs} and \ref{diffm}, together with Proposition \ref{zero}, we obtain below the main result of this section.

\begin{theorem}\label{homa}
The restriction of the weak dual pair for EPDiff equation
to the open subset $T^*\Emb(S,M)^\times $ of the regular part of $T^*\Emb(S,M)$, 
with canonical symplectic form, is a dual pair.
\end{theorem}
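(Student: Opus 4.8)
The plan is to invoke Proposition \ref{zero} with $\g_1=\X(M)$ (acting on the left, with momentum map $\J_1=\J_L$) and $\g_2=\X(S)$ (acting on the right, with momentum map $\J_2=\J_R$), on the open subset $T^*\Emb(S,M)^\times$ equipped with the restriction of the canonical symplectic form. Since both actions are cotangent lifts, formula \eqref{cot_momap} guarantees that $\J_L$ and $\J_R$, given by \eqref{trei} and \eqref{patru}, are infinitesimally equivariant. The weak dual pair property was already established above on all of $T^*\F(S,M)$; because $T^*\Emb(S,M)^\times$ is an open, invariant subset, the inclusions \eqref{dual_pair} restrict pointwise to it, so I would first record that we again have a weak dual pair there.

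It then remains to verify the transitivity hypotheses \eqref{formal_dual_pair_condition} of Proposition \ref{zero}, namely
\[
\left(\X(S)\right)_{T^*\Emb(S,M)^\times}=\ker T\J_L
\quad\text{and}\quad
\left(\X(M)\right)_{T^*\Emb(S,M)^\times}=\ker T\J_R.
\]
The inclusions $\subset$ are the infinitesimal invariances already contained in the weak dual pair. For the reverse inclusions I would translate the group-level transitivity of Propositions \ref{diffs} and \ref{diffm} into these infinitesimal statements: transitivity of the right $\Diff(S)$ action on each level set of $\J_L$ forces the infinitesimal generators of that action to span the tangent space to the level set, which is $\ker T\J_L$; transitivity of the left $\Diff(M)$ action on the connected components of each level set of $\J_R$ likewise forces its generators to span the tangent space to the level set (a connected component being open in the level set, the two tangent spaces agree), which is $\ker T\J_R$. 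These give the two desired equalities.

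With both transitivity conditions in hand, Proposition \ref{zero} upgrades the weak dual pair to a genuine dual pair, so the equalities \eqref{strong_dual_pair} hold on $T^*\Emb(S,M)^\times$. The substance of the argument lies not in this final assembly but in the two transitivity results it invokes: Proposition \ref{diffs} uses the nowhere-vanishing condition to reconstruct a reparametrizing diffeomorphism of $S$, while the harder Proposition \ref{diffm} rests on the infinitesimal transitivity of $\Diff(M,N)$ on $\Ga_\al(T^*M|_N)^\times$ established in Lemma \ref{hard}, together with the transitivity of $\Diff(M)$ on connected components of $\Emb(S,M)$. Hence the main obstacle is resolved upstream, and the theorem follows by a direct application of Proposition \ref{zero}.
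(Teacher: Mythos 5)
Your proposal is correct and takes essentially the same route as the paper: Theorem \ref{homa} is obtained there by combining the weak dual pair property of $(\J_L,\J_R)$ with the transitivity results of Propositions \ref{diffs} and \ref{diffm}, read as the two equalities \eqref{formal_dual_pair_condition}, and then invoking Proposition \ref{zero}. The paper offers no argument beyond this assembly, so your write-up (including the translation of group-level transitivity into the infinitesimal conditions) coincides with its proof.
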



\section{A map between dual pairs}\label{six}

We now turn our attention to the special case when the symplectic manifold involved in the ideal fluid dual pair is a cotangent bundle endowed with the canonical symplectic form. Recall from Remark \ref{45} that in this case, since the symplectic form is exact, the central extension of $\mathfrak{X}_{ex}(S, \mu )$ is not needed to get a well defined momentum map $\mathbf{J}_R: \mathcal{F} (S, T^*M)\rightarrow  \mathfrak{X}_{ex}(S, \mu )^*$.
We will show that in this case there is a natural symplectic map that relates both the Euler and the EPDiff dual pair constructions.

Given a manifold $S$ with volume form $\mu $, we have the natural vector bundle automorphism
\begin{equation}\label{phii}
\Phi: \mathcal{F}(S,T^*M)\rightarrow T^*\mathcal{F}(S,M),\quad\Phi( P ):={P}\mu ,
\end{equation}
covering the identity on $\mathcal{F}(S,M)$, whose image is the regular part of $T^*\F(S,M)$. 
Denoting by $\pi:T^*M\to M$ the cotangent bundle projection, we have $P \mu \in T_Q^*\mathcal{F}(S,M)$ where $Q=\pi\o P$. The pairing with a tangent vector $V_Q$ in $T_Q \mathcal{F}(S,M)=\Ga(Q^*TM)$ is given by
\[
\langle P \mu ,V_Q\rangle=\int_S P(V_Q) \mu .
\]

On $T^*\F(S,M)$ we consider the canonical symplectic form $\Om= -\dd\th_{\F(S,M)}$, where $\th_{\F(S,M)}$ is the canonical $1$-form on the cotangent bundle $T^*\F(S,M)$. On $\mathcal{F}(S,T^*M)$ we take the symplectic form $\bar\om=-\dd\bar\th_M$
determined via the bar map from the canonical symplectic form $\om=-\dd\th_M$ on $T^*M$, where $\th_M$ is the canonical $1$-form on the cotangent bundle $T^*M$.

\begin{lemma}\label{symp}
The vector bundle automorphism $\Phi: (\mathcal{F}(S,T^*M),\bar\om)\rightarrow (T^*\mathcal{F}(S,M),\Om)$
is a symplectic map.
\end{lemma}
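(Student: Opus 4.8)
The plan is to reduce the statement to the identity of canonical $1$-forms
\[
\Phi^*\th_{\F(S,M)}=\bar\th_M,
\]
after which the lemma follows formally. Granting this identity, naturality of the exterior derivative under pullback, together with the fact that the bar map commutes with $\dd$ (Proposition \ref{cor2}), yields
\[
\Phi^*\Om=-\dd\big(\Phi^*\th_{\F(S,M)}\big)=-\dd\bar\th_M=\overline{-\dd\th_M}=\bar\om,
\]
which is exactly the assertion that $\Phi$ is symplectic. So the whole problem is to compute the pullback of the tautological $1$-form.

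To do this I would evaluate both sides on an arbitrary $U_P\in T_P\F(S,T^*M)$, regarded as a vector field along the map $P:S\to T^*M$. By the intrinsic definition of the canonical $1$-form on $T^*\F(S,M)$,
\[
\big(\Phi^*\th_{\F(S,M)}\big)_P(U_P)=\big\langle\Phi(P),\,T\pi_{\F(S,M)}\cdot T_P\Phi\cdot U_P\big\rangle.
\]
Because $\Phi(P)=P\mu$ covers the identity on $\F(S,M)$, one has $\pi_{\F(S,M)}\o\Phi=\bar\pi$, where $\bar\pi:\F(S,T^*M)\to\F(S,M)$, $\bar\pi(P)=\pi\o P$, is the bundle projection induced by the cotangent projection $\pi:T^*M\to M$. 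Hence $T\pi_{\F(S,M)}\cdot T_P\Phi\cdot U_P=T_P\bar\pi\cdot U_P=T\pi\o U_P=:V_Q$, a vector field along $Q=\pi\o P$.

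Using the pairing formula $\langle P\mu,V_Q\rangle=\int_S P(V_Q)\mu$ and then unwinding pointwise, the computation becomes
\[
\big(\Phi^*\th_{\F(S,M)}\big)_P(U_P)=\int_S P(T\pi\o U_P)\,\mu=\int_S \th_M(U_P)\,\mu=\big(\bar\th_M\big)_P(U_P),
\]
since at each $s\in S$ the integrand $P(s)\big(T\pi(U_P(s))\big)$ is precisely the value at $P(s)$ of the canonical $1$-form $\th_M$ on $T^*M$ evaluated on $U_P(s)$, and the final integral is the definition of the bar $1$-form $\bar\th_M$. The only delicate point is the bookkeeping in this tangent computation: one must identify $T_P\F(S,T^*M)$ with sections of $P^*T(T^*M)$ and use that $\Phi$ covers the identity to collapse $T\pi_{\F(S,M)}\o T\Phi$ into $T\pi\o(\cdot)$, after which the canonical $1$-form on $T^*\F(S,M)$ is matched with $\th_M$ through the pairing formula; once this $1$-form identity is in hand, the passage to the symplectic forms is immediate.
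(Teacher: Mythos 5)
Your proposal is correct and follows essentially the same route as the paper: both reduce the lemma to the identity $\Phi^*\theta_{\mathcal{F}(S,M)}=\bar\theta_M$ of canonical $1$-forms and verify it by evaluating on a tangent vector at $P$ (viewed as a vector field along $P$), using that $\pi_{\mathcal{F}(S,M)}\circ\Phi$ sends $P$ to $\pi\circ P$ and the pairing formula $\langle P\mu,V_Q\rangle=\int_S P(V_Q)\mu$. The only difference is that you spell out the final passage from the $1$-form identity to $\Phi^*\Omega=\bar\omega$ via Proposition \ref{cor2}, a step the paper leaves implicit.
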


\begin{proof}
It is enough to check that $\Ph^*\th_{\F(S,M)}=\bar\th_M$. For this let $P\in \mathcal{F}(S,T^*M)$ and ${\mathcal {V}}_P$ a tangent vector at $P$, \ie a vector field on $T^*M$ along $P$. Then
\begin{align*}
(\Ph^*\th_{\F(S,M)})({\mathcal {V}}_P)&=\th_{\F(S,M)}(T\Ph({\mathcal {V}}_P))
=\left\langle\Ph(P),T(\pi_{\F(S,M)}\o \Ph)({\mathcal {V}}_P)\right\rangle\\
&=\int_SP(T\pi\o {\mathcal {V}}_P)\mu 
=\int_S\th_M({\mathcal {V}}_P)\mu =\bar\th_M({\mathcal {V}}_P),
\end{align*}
where $\pi:T^*M\to M$ and $\pi_{\F(S,M)}:T^*\F(S,M)\to\F(S,M)$ are canonical projections, so $(\pi_{\F(S,M)}\o\Ph)(P)=\pi\circ P$.
\end{proof}

\medskip
The relation between the ideal fluid dual pair and the EPDiff dual pair is presented in the next theorem.

\begin{theorem}\label{commutative_diagram}
The weak dual pair $(\hat\J_L,\J_R)$ for Euler equation involving a cotangent bundle 
and the weak dual pair $(\J_L,\J_R)$ for the EPDiff equation are related by the commutative diagram
\[
\xymatrix{
\F(T^*M)^*\ar[d] &\mathcal{F}(S,T^*M) \ar[l]_{\hat\J_L}\ar[r]^{\J_R} \ar[d]_{\Phi}&
{\X_{ex}(S,\mu)}^*\\
\X(M)^*& T^*\F(S,M)\ar[l]_{\mathbf{J}_L} \ar[r]^{\mathbf{J}_R} &
\X(S)^*\ar[u],\\
}
\]
where the linear map $\X(S)^*\to{\X_{ex}(S,\mu)}^*$ is dual to the inclusion $\X_{ex}(S,\mu)\to\X(S)$
and the linear map $\F(T^*M)^*\to\X(M)^*$
is dual to the canonical map 
\begin{equation}\label{calp}
\mathcal{P}:\X(M)\to\F(T^*M),\quad\mathcal{P}( X )(\alpha_x):=\langle\alpha_x, X (x)\rangle.
\end{equation}
Moreover, all arrows in this diagram are formally Poisson maps.
\end{theorem}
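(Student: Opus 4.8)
The plan is to verify the theorem in three stages: that the left square commutes, that the right square commutes, and that every arrow is a formal Poisson map. Since $\Phi$ is a symplectomorphism by Lemma \ref{symp}, it is automatically a formal Poisson map; and the four momentum maps $\hat\J_L,\J_R$ (top) and $\J_L,\J_R$ (bottom) are formally Poisson because they are (infinitesimally) equivariant momentum maps, cf. \eqref{hat_momap_left}, \eqref{exactj}, \eqref{trei}, \eqref{patru} together with Remark \ref{45} (where the symplectic form $\om=-\dd\th_M$ on $T^*M$ is exact). Thus the substance of the proof is the commutativity of the two squares and the Poisson property of the two side maps.

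For the left square I would simply unwind the definitions. Writing $Q=\pi\o P$, the far-left vertical arrow, being dual to $\mathcal{P}$ from \eqref{calp}, sends $\hat\J_L(P)$ to the functional $X\mapsto\langle\hat\J_L(P),\mathcal{P}(X)\rangle$. By \eqref{hat_momap_left} applied to the symplectic manifold $T^*M$ this equals $\int_S(\mathcal{P}(X)\o P)\mu=\int_S\langle P,X\o Q\rangle\mu$, since $\mathcal{P}(X)(P(s))=\langle P(s),X(Q(s))\rangle$. On the other hand $\Phi(P)=P\mu$ and \eqref{trei} give $\langle\J_L(\Phi(P)),X\rangle=\int_S\langle P,X\o Q\rangle\mu$. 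As the two expressions agree for every $X\in\X(M)$, the left square commutes.

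The right square is the heart of the argument and I expect it to be the main obstacle, since here the Euler right leg $\J_R(P)=-P^*\om$ of \eqref{exactj} must be matched with the EPDiff right leg $\J_R(P\mu)=P\cdot TQ$ of \eqref{patru}. The key observation is that $P^*\th_M$ is exactly the $1$-form on $S$ given by $Y\mapsto\langle P,TQ\o Y\rangle$, because $\th_M$ is the canonical $1$-form and $T\pi\o TP=TQ$. Since $\om=-\dd\th_M$, the Euler right leg is $-P^*\om=\dd(P^*\th_M)$, so for $X_\al\in\X_{ex}(S,\mu)$ with $\ii_{X_\al}\mu=\dd\al$ I would compute
\begin{equation*}
\langle\J_R(P),X_\al\rangle=\int_S\dd(P^*\th_M)\wedge\al=\int_S P^*\th_M\wedge\dd\al=\int_S P^*\th_M\wedge\ii_{X_\al}\mu=\int_S\big(\ii_{X_\al}P^*\th_M\big)\mu,
\end{equation*}
using Stokes' theorem on the closed manifold $S$ and the top-degree identity $\be\wedge\ii_{X_\al}\mu=(\ii_{X_\al}\be)\mu$ for a $1$-form $\be$. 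The last integral equals $\int_S\langle P,TQ\o X_\al\rangle\mu=\langle\J_R(P\mu),X_\al\rangle$, which under the right-hand vertical map (restriction to $\X_{ex}(S,\mu)$, dual to the subalgebra inclusion) is precisely the image of $\J_R(\Phi(P))$. Hence the right square commutes.

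It remains to check that the two side maps are formally Poisson. The restriction map $\X(S)^*\to\X_{ex}(S,\mu)^*$ is dual to the Lie subalgebra inclusion $\X_{ex}(S,\mu)\hookrightarrow\X(S)$, hence Poisson for the Lie-Poisson structures. For the map $\F(T^*M)^*\to\X(M)^*$ I would note that $\mathcal{P}$ is a Lie algebra homomorphism, $\{\mathcal{P}(X),\mathcal{P}(Y)\}=\mathcal{P}([X,Y])$ for the canonical Poisson bracket on $\F(T^*M)$ — this is exactly the infinitesimal equivariance of the canonical cotangent momentum map for the $\Diff(M)$ action on $T^*M$ — so its dual is again Poisson; the correct plus/minus signs on the Lie-Poisson brackets are dictated by the left/right nature of the actions, as recorded in the remark on $(\pm)$ Lie-Poisson brackets. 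With both squares commuting and all seven arrows formally Poisson, the theorem follows.
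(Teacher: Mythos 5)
Your proposal is correct and follows essentially the same route as the paper's proof: both squares are verified by the same direct computations (your right-square calculation is the paper's chain of identities $TQ=T\pi\o TP$, the canonical $1$-form, the top-degree identity, Stokes, and $\om=-\dd\th_M$, merely read in the opposite direction), and the Poisson property is handled identically via equivariance of the momentum maps, duality with Lie algebra homomorphisms, and Lemma \ref{symp} for $\Phi$. No gaps.
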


\begin{proof}
Let $P\in\mathcal{F}(S,T^*M)$, $ X \in\X(M)$ and $X_\al\in\X_{ex}(S,\mu)$.
The commutativity of the left diagram follows from the equalities
\begin{align*}
\left\langle\mathbf{J}_L(\Ph(P)), X \right\rangle&=\left\langle\mathbf{J}_L(P \mu ), X\right \rangle\stackrel{\eqref{trei}}{=}\int_S P( X \o Q) \mu \stackrel{\eqref{calp}}{=}\int_S(\P( X )\o P) \mu \\
&\stackrel{}{=}\left\langle\hat\J_L(P),\mathcal{P}( X )\right\rangle.
\end{align*}
The commutativity of the right diagram follows from the equalities
\begin{align*}
\left\langle\mathbf{J}_R(\Ph(P)),X_\al\right\rangle&=\left\langle\mathbf{J}_R(P \mu ),X_\al\right\rangle\stackrel{\eqref{patru}}{=}\int_S P(TQ\o X_\al) \mu \\
&=\int_S P(T\pi\o TP\o X_\al) \mu 
=\int_S \th_M(TP\o X_\al) \mu \\
&
=\int_S \ii_{X_\al}(P^*\th_M)\mu 
=\int_S P^*\th_M\wedge \mathbf{d}\al=-\int_S P^*\om\wedge\al
\stackrel{\eqref{exactj}}{=}\left\langle\J_R(P),X_\al\right\rangle.
\end{align*}

The horizontal maps in the diagram are formally Poisson maps, since they are equivariant momentum maps. The vertical
maps, except $\Phi$, are dual to Lie
algebra homomorphisms ($\mathcal{P}$ is a Lie algebra homomorphism), so they are Poisson maps.
That $\Phi$ is Poisson follows from Lemma \ref{symp}.
\end{proof}

\begin{remark}[compatibility of the actions]
\rm
Since the map $\Phi$ is a symplectic diffeomorphism, one can write the EPDiff dual pair on
$\F(S,T^*M)$ and obtain the diagram

\begin{picture}(150,100)(-70,40)%
\put(110,75){$\F(S,T^*M)$}

\put(77,60){$\mathbf{J}_L$}

\put(185,60){$\mathbf{J}_R$}

\put(60,35){$\mathfrak{X}(M)^{\ast}$}

\put(190,35){$\mathfrak{X}(S)^{\ast}$.}

\put(77,90){$\hat\J_L$}

\put(185,90){$\mathbf{J}_R$}

\put(55,115){$\F(T^*M)^{\ast}$}

\put(180,115){$\mathfrak{X}_{ex}(S,\mu)^{\ast}$}

\put(100,67){\vector(-1, -1){20}}

\put(175,67){\vector(1,-1){20}}

\put(100,90){\vector(-1, 1){20}}

\put(175,90){\vector(1,1){20}}

\put(65,109){\vector(0,-1){60}}

\put(207,49){\vector(0,1){60}}

\end{picture}\\

\noindent Since both dual pairs are now defined on the same space, we can ask whether or not the left (resp. right) actions involved in both dual pairs are compatible. It is clear that the right action of $\Diff_{ex}(S,\mu)$ associated to the ideal fluid dual pair is compatible with the right action of $\Diff(S)$ associated to the EPDiff equation. It is worth pointing out that the same compatibility holds for the left actions.
Indeed, the cotangent-lifted left action of
$\Diff(M)$ on the regular cotangent bundle $\mathcal{F} (S,T^*M)$ of $ \mathcal{F} (S,M)$ 
yields the action of a subgroup of canonical
transformations of $T^*M$. Thus,
the group action producing the left-leg of the EPDiff dual pair
can be considered as the action of a subgroup of
Hamiltonian diffeomorphisms, which in turn produces the left-leg
momentum map ideal fluid dual pair ${\bf J}_L$.
\end{remark}

\appendix

\section{Appendix}

\subsection{The hat calculus}

Differential forms on the Fr\'echet manifold $\F(S,M)$ of smooth functions on a compact $k$--dimensional manifold $S$ can be obtained in a natural way from pairs of differential forms on $M$ and $S$
by the hat pairing. In this Appendix we present the functorial properties of the hat pairing \cite{Vi09}.

\paragraph{Hat pairing.}
Let $\ev:S\x\F(S,M)\to M$ be the \textit{evaluation map} 
$\ev(s,f)=f(s)$, and $\pr:S\x\F(S,M)\to S$ the projection 
on the first factor.
A pair of differential forms $\om\in\Om^p(M)$ and $\al\in\Om^q(S)$,
with $p+q\geq k$, determines a differential form $\widehat{\om\cdot\al}$ on $\F(S,M)$ by the fiber integral over $S$ of the $(p+q)$-form $(\ev^*\om\wedge\pr^*\al)$ on $S\x\F(S,M)$:
\begin{equation}\label{use}
{\widehat{\om\cdot\al}=\fint_S\ev^*\om\wedge\pr^*\al}.
\end{equation}
In this way one obtains a bilinear map called the {\it hat pairing}: 
\begin{equation*}\label{pair}
\Om^p(M)\x\Om^q(S)\to\Om^{p+q-k}(\F(S,M)),\quad (\omega,\alpha)\mapsto \widehat{\om\cdot\al}.
\end{equation*}
For example, the hat pairing of differential forms on $M$ with the constant function $1\in\Om^0(S)$ is
the transgression map
$\om\in\Om^p(M)\to\fint_S\ev^*\om\in\Om^{p-k}(\F(S,M))$.

\begin{remark}[Fiber integration] \rm Recall that, given a compact $k$--dimensional manifold $S$ and a manifold $P$, the fiber integration over $S$ 
assigns to $\beta\in\Om^n(S\x P)$, $n\geq k$, the differential form $\fint_S\beta\in\Om^{n-k}(P)$ defined by
\[
\left(\fint_S\beta\right)(p)=\int_S\beta_p\,\in\,\La^{n-k}T^*_pP,\quad\text{for all}\quad p\in P,
\]
where the $\La^{n-k}T^*_pP$-valued $k$-form $\beta_p\in\Om^k\left(S,\La^{n-k}_pP\right)$ is defined by
\[
\left\langle\beta_p(s)\left(u_s^1,\dots,u_s^{k}\right),v_p^1\wedge\dots\wedge v_p^{n-k}\right\rangle:=
\beta(s,p)\left((0_s,v_p^1),\dots,(0_s,v_p^{n-k}),(u_s^1,0_p),\dots,(u_s^{k},0_p)\right)
\]
for all $u_s^j\in T_sS$ and $v_p^i\in T_pP$. We refer to \cite{GHV72} for more informations about fiber integration. Note that in formula \eqref{use}, $P$ is the infinite dimensional manifold $\mathcal{F}(S,M)$ and $\beta$ is the $(p+q)$-form $(\ev^*\om\wedge\pr^*\al)\in\Omega^{p+q}\left(S\times\mathcal{F}(S,M)\right)$.
\end{remark}

An explicit expression of the hat pairing avoiding fiber integration is
\begin{align}\label{ffff}
(\widehat{\om\cdot\al})(f)\left(U_f^1,\dots,U_f^{p+q-k}\right)
=\int_Sf^*\left(\mathbf{i}_{U_f^{p+q-k}}\dots \mathbf{i}_{U_f^1}(\om\o f)\right)\wedge\al,
\end{align}
for $U_f^1,\dots U_f^{p+q-k}\in T_f\F(S,M)$.
Here we denote by $f^*\be_f$ the ``restricted pull-back" by $f$ of a section $\be_f$ of $f^*(\La^mT^*M)\rightarrow S$. Thus, $f^*\beta_f$ is the differential $m$-form on $S$ given by
\[
(f^*\be_f)(s)\left(u^1_s,\dots,u^m_s\right)=\beta_f(s)\left(T_sf(u^1_s),\dots,T_sf(u^m_s)\right),\quad\text{for all}\quad u^i_s\in T_sS.
\]
The fact that \eqref{use} and \eqref{ffff} provide the same differential form on $\F(S,M)$ can be deduced from the identity
\begin{align*}
&(\ev^*\om){(s,f)}\left(\left(0_s,U_f^1\right),\dots, \left(0_s,{U_f^{p-k}}\right),\left(u_s^1,0_f\right),\dots,\left(u_s^k,0_f\right)\right)\\
&\qquad\qquad\qquad =\left(f^*\left(\mathbf{i}_{U_f^{p-k}}\dots \mathbf{i}_{U_f^1}(\om\o f)\right)\right)(s)\left(u_s^1,\dots, u_s^k\right)
\end{align*}
for $U_f^1,\dots,U_f^{p-k}\in T_f\F(S,M)$ and $u_s^1,\dots,u_s^k\in T_sS$.

In the particular case when $f$ is an embedding, $f\in\operatorname{Emb}(S,M)$, then formula \eqref{ffff} simplifies. Indeed, since the action of $\X(M)$ is infinitesimally transitive on the open subset $\Emb(S,M)\subset\F(S,M)$ of embeddings, we can express 
$\widehat{\om\cdot\al}$ at $f\in\Emb(S,M)$ as
\begin{equation}\label{embf}
(\widehat{\om\cdot\al})(f)\left(U_1\o f,\dots, U_{p+q-k}\o f\right)=\int_Sf^*\left(\mathbf{i}_{U_{p+q-k}}\dots \mathbf{i}_{U_1}\om\right)\wedge\al,
\end{equation}
for all $U_1,\dots,U_{p+q-k}\in\X(M)$.

The next proposition follows from the known fact that differentiation and fiber integration along a boundary free manifold $S$ commute.

\begin{proposition}\label{dddd}
The exterior derivative $\dd$ 
is a derivation for the hat pairing, \ie
\begin{equation}\label{deri}
\dd(\widehat{\om\cdot\al})= \widehat{(\dd\om)\cdot\al}+(-1)^{p}\widehat{\om\cdot\dd\al},
\end{equation}
where $\om\in\Om^p(M)$ and $\al\in\Om^q(S)$.
In particular the hat pairing induces a bilinear map for de Rham cohomology spaces
$$
H^p(M)\x H^q(S)\to H^{p+q-k}(\F(S,M)).
$$ 
\end{proposition}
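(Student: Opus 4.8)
The plan is to work directly from the fiber-integral definition \eqref{use}, namely $\widehat{\om\cdot\al}=\fint_S\ev^*\om\wedge\pr^*\al$, and to reduce the derivation property \eqref{deri} to two standard facts: that $\dd$ commutes with fiber integration over the closed manifold $S$, and the ordinary Leibniz rule for $\dd$ on the product $S\times\F(S,M)$. First I would compute the exterior derivative of the integrand $\ev^*\om\wedge\pr^*\al\in\Om^{p+q}(S\times\F(S,M))$. Since $\ev$ and $\pr$ are smooth maps and pullback commutes with $\dd$, the Leibniz rule gives
\[
\dd(\ev^*\om\wedge\pr^*\al)=\ev^*(\dd\om)\wedge\pr^*\al+(-1)^p\,\ev^*\om\wedge\pr^*(\dd\al),
\]
where the sign $(-1)^p$ is exactly the one appearing in \eqref{deri} and arises solely from moving $\dd$ past the $p$-form $\ev^*\om$.

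Next I would pass $\dd$ through the fiber integral. The crucial input is that, for fiber integration over a compact manifold $S$ without boundary, one has $\dd\circ\fint_S=\fint_S\circ\,\dd$; this is the ``known fact'' alluded to just before the statement, and its proof is an application of Stokes' theorem on $S$, the boundary term vanishing precisely because $\pa S=\emptyset$. With the ordering convention fixed in the Remark on fiber integration the two operations commute with no extra sign, and, although $\F(S,M)$ is infinite dimensional, this identity is genuinely a computation in the finite-dimensional fiber direction and so is unaffected. Applying this and then fiber-integrating the two terms above term by term, recognizing each as a hat pairing via \eqref{use}, yields
\[
\dd\big(\widehat{\om\cdot\al}\big)=\fint_S\dd(\ev^*\om\wedge\pr^*\al)=\widehat{(\dd\om)\cdot\al}+(-1)^p\,\widehat{\om\cdot\dd\al},
\]
which is exactly \eqref{deri}.

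For the induced pairing on de Rham cohomology I would argue that \eqref{deri} makes the hat pairing a cochain map in each variable. If $\om$ and $\al$ are both closed, then \eqref{deri} shows $\widehat{\om\cdot\al}$ is closed. If moreover $\om=\dd\om'$ is exact while $\al$ remains closed, then \eqref{deri} applied to $\om'$ gives $\widehat{\om\cdot\al}=\widehat{(\dd\om')\cdot\al}=\dd\big(\widehat{\om'\cdot\al}\big)$, so the class is unchanged; the symmetric computation handles the case in which $\al$ is exact. Hence $\widehat{\om\cdot\al}$ depends only on the classes $[\om]\in H^p(M)$ and $[\al]\in H^q(S)$, giving the well-defined bilinear map $H^p(M)\times H^q(S)\to H^{p+q-k}(\F(S,M))$. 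The only genuine obstacle is justifying the commutation $\dd\circ\fint_S=\fint_S\circ\,\dd$ with the correct sign, together with the vanishing of the Stokes boundary term; once this is granted, everything else is formal manipulation with pullbacks and the Leibniz rule.
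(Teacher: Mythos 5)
Your proof is correct and follows exactly the route the paper intends: the paper disposes of this proposition with the single remark that exterior differentiation commutes with fiber integration along the boundary-free manifold $S$, which is precisely the key fact you isolate, justify via Stokes' theorem, and combine with the Leibniz rule on $S\times\F(S,M)$. Your expansion of the cohomology statement and your attention to the sign convention in the fiber-integration remark are both consistent with the paper's conventions.
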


\paragraph{Left $\Diff(M)$ action.}
The natural left action of the group of diffeomorphisms $\Diff(M)$ on $\F(S,M)$ 
is $\bar \ph(f)=\ph\o f$. The associated infinitesimal action of $X\in\X(M)$ is the vector field $\bar X$ on $\F(S,M)$ given by
\[
\bar X(f)=X\o f,\quad\text{for all}\quad  f\in\F(S,M).
\] 
The Jacobi-Lie bracket of two infinitesimal generators is 
$\left[\bar X,\bar Y\right]=\overline{[X,Y]}$. 

\begin{proposition}\label{pppp}
Given $\om\in\Om^p(M)$ and $\al\in\Om^q(S)$, the identities
\begin{enumerate}
\item $\bar\ph^*\widehat{\om\cdot\al}=\widehat{((\ph^*\om)\cdot\al)}$
\item $\pounds_{\bar X}\widehat{\om\cdot\al}=\widehat{((\pounds_X\om)\cdot\al)}$
\item $\mathbf{i}_{\bar X}\widehat{\om\cdot\al}=\widehat{((\mathbf{i}_X\om)\cdot\al)}$ 
\end{enumerate}
hold for all diffeomorphisms $\ph\in\Diff(M)$ and vector fields $X\in\X(M)$.
\end{proposition}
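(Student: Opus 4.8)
The plan is to reduce all three identities to standard properties of the fiber integration $\fint_S$ from the definition \eqref{use}, by exploiting that the evaluation map intertwines the two actions. Writing $\on{id}_S\times\bar\ph:S\times\F(S,M)\to S\times\F(S,M)$, the basic compatibilities are $\ev\o(\on{id}_S\times\bar\ph)=\ph\o\ev$ and $\pr\o(\on{id}_S\times\bar\ph)=\pr$; infinitesimally, the generator $\bar X(f)=X\o f$ is the base part of the lifted vector field $(0,\bar X)$ on $S\times\F(S,M)$, which is $\ev$-related to $X$ and annihilated by $T\pr$. These are the only geometric inputs.

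For (1) I would invoke the naturality of fiber integration under base change: since $\on{id}_S\times\bar\ph$ is fiber-preserving over the compact fiber $S$ and covers $\bar\ph$, one has $\bar\ph^*\fint_S=\fint_S(\on{id}_S\times\bar\ph)^*$. Applying this to $\ev^*\om\wedge\pr^*\al$ and using $(\on{id}_S\times\bar\ph)^*\ev^*\om=\ev^*(\ph^*\om)$ together with $(\on{id}_S\times\bar\ph)^*\pr^*\al=\pr^*\al$ gives $\bar\ph^*\widehat{\om\cdot\al}=\fint_S\ev^*(\ph^*\om)\wedge\pr^*\al=\widehat{(\ph^*\om)\cdot\al}$. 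The same identity can be verified directly from the explicit formula \eqref{ffff}, using $T_f\bar\ph\cdot U_f=T\ph\o U_f$ and the pointwise definition of $\ph^*\om$.

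For (3) I would use that fiber integration over the closed fiber $S$ commutes with interior product by a base vector field, namely $\mathbf{i}_{\bar X}\fint_S=\fint_S\mathbf{i}_{(0,\bar X)}$, which holds because $(0,\bar X)$ has vanishing $S$-component. Distributing $\mathbf{i}_{(0,\bar X)}$ over the wedge and using $\mathbf{i}_{(0,\bar X)}\ev^*\om=\ev^*(\mathbf{i}_X\om)$ (from $\ev$-relatedness) and $\mathbf{i}_{(0,\bar X)}\pr^*\al=0$ yields $\mathbf{i}_{\bar X}\widehat{\om\cdot\al}=\fint_S\ev^*(\mathbf{i}_X\om)\wedge\pr^*\al=\widehat{(\mathbf{i}_X\om)\cdot\al}$. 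Alternatively, inserting $\bar X(f)=X\o f$ into the innermost slot of \eqref{ffff} and using the pointwise identity $\mathbf{i}_{X\o f}(\om\o f)=(\mathbf{i}_X\om)\o f$ gives the result immediately.

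For (2) there are two routes. The quickest is to differentiate (1) along a flow $\ph_t$ of $X$: as $\bar\ph_t$ is the flow of $\bar X$, one gets $\pounds_{\bar X}\widehat{\om\cdot\al}=\frac{d}{dt}\big|_{0}\bar\ph_t^*\widehat{\om\cdot\al}=\frac{d}{dt}\big|_{0}\widehat{(\ph_t^*\om)\cdot\al}=\widehat{(\pounds_X\om)\cdot\al}$, passing the $t$-derivative through the hat pairing by its linearity in the first argument. To sidestep any interchange-of-limit concern, one may instead combine (3) with Cartan's formula $\pounds_{\bar X}=\dd\mathbf{i}_{\bar X}+\mathbf{i}_{\bar X}\dd$ and the derivation property of $\dd$ in Proposition \ref{dddd}; expanding both sides, the two terms proportional to $\widehat{(\mathbf{i}_X\om)\cdot\dd\al}$ cancel since their signs are $(-1)^{p-1}$ and $(-1)^p$, leaving $\widehat{((\dd\mathbf{i}_X+\mathbf{i}_X\dd)\om)\cdot\al}=\widehat{(\pounds_X\om)\cdot\al}$. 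The only genuinely nonroutine ingredients are the two fiber-integration facts (naturality under base change, and commutation with base interior products) over the infinite-dimensional base $\F(S,M)$; these belong to the hat calculus of \cite{Vi09} and the theory of integration over compact fibers \cite{GHV72}, and everything else is sign bookkeeping.
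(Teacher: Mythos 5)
The paper itself contains no proof of Proposition \ref{pppp}: it is stated in the Appendix as part of the hat calculus imported from \cite{Vi09}, so there is no in-paper argument to compare yours against. Your proposal is correct, and it is exactly the argument that the paper's definition \eqref{use} invites. The two geometric inputs you isolate are right: $\ev\circ(\operatorname{id}_S\times\bar\ph)=\ph\circ\ev$, $\pr\circ(\operatorname{id}_S\times\bar\ph)=\pr$, and the $\ev$-relatedness of $(0,\bar X)$ to $X$ with $T\pr\cdot(0,\bar X)=0$ all check out directly from $\ev(s,f)=f(s)$ and $T_{(s,f)}\ev(u_s,U_f)=T_sf(u_s)+U_f(s)$. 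The two fiber-integration facts you invoke (naturality under a fiber-preserving map covering a base map, and $\mathbf{i}_{\bar X}\fint_S=\fint_S\mathbf{i}_{(0,\bar X)}$ for a vector field with vanishing fiber component) are sign-free precisely with the paper's convention for $\fint_S$, in which base vectors occupy the first slots; both can be verified pointwise from that definition, so invoking them over the infinite-dimensional base is no worse than what the paper itself does when it proves Proposition \ref{dddd} by appeal to the commutation of $\dd$ with fiber integration. Your fallback verifications via the explicit formula \eqref{ffff} — using $\mathbf{i}_{X\circ f}(\om\circ f)=(\mathbf{i}_X\om)\circ f$ for (3), and the pointwise computation for (1) — are also correct and arguably the quickest route, since they avoid any fiber-integration machinery. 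For (2), your second route (Cartan's formula plus Proposition \ref{dddd} plus item (3)) is the one to prefer: the sign bookkeeping $(-1)^{p-1}+(-1)^p=0$ is right, and it sidesteps both the differentiation-under-the-integral issue and the incompleteness of the flow of $X$ that mildly complicate your first route.
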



\paragraph{Right $\Diff(S)$ action.}
The natural right action of the diffeomorphism group $\Diff(S)$ on $\F(S,M)$ is
$\hat\ps(f)=f\o\ps$. The infinitesimal action of $Z\in\X(S)$ is the vector field $\hat Z$ on $\F(S,M)$ given by
\[
\widehat Z(f)=Tf\o Z,\quad\text{for all}\quad  f\in\F(S,M).
\]
The Jacobi-Lie bracket of two infinitesimal generators is 
$[\hat X,\hat Y]=-\widehat{[X,Y]}$. 

\begin{proposition}\label{five}
Given $\om\in\Om^p(M)$ and $\al\in\Om^q(S)$, the identities 
\begin{enumerate}
\item $\widehat\ps^*\widehat{\om\cdot\al}=\widehat{(\om\cdot(\ps^{-1})^*\al)}$
\item $\pounds_{\widehat Z\,}\widehat{\om\cdot\al}=-\widehat{(\om\cdot \pounds_Z\al)}$
\item $\mathbf{i}_{\widehat Z\,}\widehat{\om\cdot\al}=(-1)^{p+1}\widehat{(\om\cdot \mathbf{i}_Z\al)}$ 
\end{enumerate}
hold for all diffeomorphisms $\ps\in\Diff(S)$ and vector fields $Z\in\X(S)$.
\end{proposition}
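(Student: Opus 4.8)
The plan is to prove the group-level identity in item~1 first, and then obtain the two infinitesimal identities in items~2 and~3 from it and from a direct computation. Throughout I would work on the product $S\times\F(S,M)$ and use the definition $\widehat{\om\cdot\al}=\fint_S\ev^*\om\wedge\pr^*\al$ from \eqref{use}, exploiting how the right action interacts with $\ev$ and $\pr$. For item~1, set $F_\ps:=\operatorname{id}_S\times\hat\ps$ and $G_\ps:=\ps\times\operatorname{id}_{\F(S,M)}$ on $S\times\F(S,M)$. The two structural relations are $\ev\circ F_\ps=\ev\circ G_\ps$ (both send $(s,f)$ to $f(\ps(s))$) and $\pr\circ F_\ps=\pr$, while $\pr\circ G_\ps=\ps\circ\pr$. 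By naturality of the fiber integral under the base change $\hat\ps$ one has $\hat\ps^*\widehat{\om\cdot\al}=\fint_S F_\ps^*(\ev^*\om\wedge\pr^*\al)$. Using $F_\ps^*\ev^*\om=G_\ps^*\ev^*\om$ and the rewriting $\pr^*\al=G_\ps^*\big(\pr^*(\ps^{-1})^*\al\big)$, this becomes $\fint_S G_\ps^*\big(\ev^*\om\wedge\pr^*(\ps^{-1})^*\al\big)$; invariance of the fiber integral under the fiber diffeomorphism $G_\ps$ then removes $G_\ps$ and produces $\widehat{\om\cdot(\ps^{-1})^*\al}$, which is item~1. (Here $S$ is oriented and $\ps$ is taken orientation preserving, the case relevant for the right $\Diff(S,\mu)$ action; an orientation reversing $\ps$ simply contributes the expected sign.)

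For item~2, I would apply item~1 to the flow $\ps_t$ of $Z$, which also generates the flow $\hat\ps_t$ of $\hat Z$ on $\F(S,M)$, and differentiate at $t=0$. Since the hat pairing is bilinear the derivative passes onto the second slot, and from $\ps_t^{-1}=\ps_{-t}$ one gets $\frac{d}{dt}\big|_{0}(\ps_t^{-1})^*\al=-\pounds_Z\al$; hence $\pounds_{\hat Z}\widehat{\om\cdot\al}=-\widehat{\om\cdot\pounds_Z\al}$. For item~3 I would instead compute directly from the explicit formula \eqref{ffff}. Contracting $\widehat{\om\cdot\al}(f)$ in its first slot with $\hat Z(f)=Tf\circ Z$ inserts $\ii_{Tf\circ Z}$ as the innermost interior product; moving it outward past the remaining $p+q-k-1$ interior products costs a sign $(-1)^{p+q-k-1}$, and the restricted-pullback identity $f^*\circ\ii_{Tf(Z)}=\ii_Z\circ f^*$ turns it into $\ii_Z$ acting on the $(k-q+1)$-form $\Theta:=f^*\big(\ii_{U^{p+q-k-1}}\cdots\ii_{U^{1}}(\om\circ f)\big)$ on $S$. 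The decisive point is that $\Theta\wedge\al$ is a $(k+1)$-form on the $k$-dimensional manifold $S$, hence zero, so the Leibniz rule forces $\ii_Z\Theta\wedge\al=(-1)^{k-q}\,\Theta\wedge\ii_Z\al$; the right-hand side reassembles via \eqref{ffff} into $\widehat{\om\cdot\ii_Z\al}$. The two signs $(-1)^{p+q-k-1}$ and $(-1)^{k-q}$ collapse to $(-1)^{p+1}$, giving item~3. (As a consistency check, combining items~2 and~3 through Cartan's formula on $\F(S,M)$ together with Proposition~\ref{dddd} reproduces item~2, which I would use to cross-verify the signs.)

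The main obstacle is precisely the sign bookkeeping in item~3, hand in hand with the observation that the apparently leftover contribution vanishes purely for the dimensional reason that a $k$-manifold carries no nonzero $(k+1)$-form. The same phenomenon governs an alternative fiber-integration proof of item~3 using the vertical lift $(0,\hat Z)$ and the identity $T\ev\,(0,\hat Z)=T\ev\,(Z,0)$: there one must recognize that the term $\fint_S\ii_{(Z,0)}(\ev^*\om\wedge\pr^*\al)$ is zero because its fiber integrand would require a $(k+1)$-fold covector on $S$. Once $\ev$, $\pr$ and the fiber integral are handled with care, and this vanishing is identified, the remaining manipulations are routine.
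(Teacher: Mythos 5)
Your proof is correct, but there is nothing in the paper to compare it against: Proposition \ref{five} is stated in the Appendix without proof, the hat calculus being imported wholesale from the preprint \cite{Vi09}. So your argument supplies what the paper omits, and it does so soundly. In item 1, the two factorizations $F_\ps=\operatorname{id}_S\times\hat\ps$ and $G_\ps=\ps\times\operatorname{id}_{\F(S,M)}$, with $\ev\o F_\ps=\ev\o G_\ps$, $\pr\o F_\ps=\pr$, $\pr\o G_\ps=\ps\o\pr$, combined with naturality of $\fint_S$ in the base and invariance of the fiber integral under fiber diffeomorphisms, is exactly the right mechanism; your orientation caveat is warranted, since for an orientation-reversing $\ps$ the change of variables in the fiber produces a sign, so the statement ``for all $\ps\in\Diff(S)$'' is slightly imprecise as written in the paper --- harmless here, because the body of the paper only acts by $\Diff(S,\mu)$ and its subgroups, which preserve the orientation determined by $\mu$. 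Item 2, obtained by differentiating item 1 along the flow $\ps_t$ after observing that $\hat\ps_t$ is the flow of $\hat Z$, is clean. In item 3 your sign bookkeeping checks out: with \eqref{ffff}, inserting $\hat Z(f)=Tf\o Z$ occupies the innermost slot, moving $\ii_{Tf\o Z}$ outward costs $(-1)^{p+q-k-1}$, the restricted pull-back satisfies $f^*\o\ii_{Tf\o Z}=\ii_Z\o f^*$, and since $\Theta\wedge\al$ is a $(k+1)$-form on the $k$-dimensional manifold $S$ it vanishes, forcing $\ii_Z\Theta\wedge\al=(-1)^{k-q}\,\Theta\wedge\ii_Z\al$; the total sign $(-1)^{p+q-k-1}(-1)^{k-q}=(-1)^{p+1}$ is as claimed, and your Cartan-formula cross-check against Proposition \ref{dddd} is indeed consistent. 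The alternative fiber-integration route you sketch for item 3 is also valid: the discarded term $\fint_S\ii_{(Z,0)}(\ev^*\om\wedge\pr^*\al)$ vanishes precisely because its fiber integrand would have to evaluate a form on $k+1$ vectors tangent to the $k$-dimensional fiber, which is the same dimension-count phenomenon you identified.
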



\paragraph{Bar map.}
When a volume form $\mu$ on the compact $k$--dimensional manifold $S$ is given, the \textit{bar map} associates to each differential $p$-form $\omega$ on $M$ a differential $p$-form $\bar\omega$ on $\F(S,M)$ defined by
\[
\bar\om(f)\left(U^1_f,\dots,U^p_f\right)=\int_S \om\left(U^1_f,\dots,U^p_f\right)\mu,
\quad\text{for all}\quad U^i_f\in T_f \mathcal{F}(S,M),
\]
where $\om\left(U^1_f,\dots,U^p_f\right):s\mapsto \om_{f(s)}\left(U_f^1(s),\dots, U_f^p(s)\right)$ defines a smooth function on $S$.
Formula \eqref{ffff} ensures that this bar map is just the hat pairing of differential forms on $M$ with the volume form $\mu$ on $S$
\begin{equation}\label{barb}
\bar\om=\widehat{\om\cdot\mu}=\fint_S\ev^*\om\wedge\pr^*\mu.
\end{equation}

>From the properties of the hat pairing presented in Propositions \ref{pppp} and \ref{dddd}, one obtains the following properties of the bar map.

\begin{proposition}\label{cor2}
For any $\om\in\Om^p(M)$, $\ph\in\Diff(M)$ and $X\in\X(M)$, the following identities hold: 
\begin{enumerate}
\item $\bar\ph^*\bar{\om}=\overline{\ph^*\om}$
\item $\pounds_{\bar X}\bar\om=\overline{\pounds_X\om}$
\item $\mathbf{i}_{\bar X}\bar\om=\overline{\mathbf{i}_X\om}$
\item $\dd\bar\om=\overline{\dd\om}$.
\end{enumerate}
\end{proposition}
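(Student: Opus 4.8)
The plan is to reduce every one of the four identities to the hat pairing by means of the defining formula $\bar\om=\widehat{\om\cdot\mu}$ recorded in \eqref{barb}, and then simply to quote the functorial properties of the hat pairing already established in Propositions \ref{pppp} and \ref{dddd}, specialized to the case in which the differential form on $S$ is the volume form $\mu$. In other words, the bar map is nothing but the hat pairing with second argument fixed to $\mu$, so each statement about $\bar\om$ is the corresponding statement about $\widehat{\om\cdot\mu}$.

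First I would treat the three identities $1$--$3$ together. Since the infinitesimal generator of the left $\Diff(M)$ action is $\bar X(f)=X\o f$, the operators $\bar\ph^*$, $\pounds_{\bar X}$ and $\ii_{\bar X}$ interact with the hat pairing only through its $M$-factor, which is precisely the content of Proposition \ref{pppp}. Setting $\al=\mu$ in its three parts yields
\[
\bar\ph^*\widehat{\om\cdot\mu}=\widehat{(\ph^*\om)\cdot\mu},\quad
\pounds_{\bar X}\widehat{\om\cdot\mu}=\widehat{(\pounds_X\om)\cdot\mu},\quad
\ii_{\bar X}\widehat{\om\cdot\mu}=\widehat{(\ii_X\om)\cdot\mu},
\]
and rewriting each side through $\bar\om=\widehat{\om\cdot\mu}$ gives exactly the asserted $\bar\ph^*\bar\om=\overline{\ph^*\om}$, $\pounds_{\bar X}\bar\om=\overline{\pounds_X\om}$ and $\ii_{\bar X}\bar\om=\overline{\ii_X\om}$.

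For identity $4$ I would invoke the derivation property \eqref{deri} of Proposition \ref{dddd} with $\al=\mu$, namely $\dd(\widehat{\om\cdot\mu})=\widehat{(\dd\om)\cdot\mu}+(-1)^p\widehat{\om\cdot\dd\mu}$. The single extra ingredient is that $\mu$ is a top-degree form on the $k$-dimensional manifold $S$, so $\dd\mu=0$ and the second term drops out, leaving $\dd\bar\om=\overline{\dd\om}$.

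In truth there is no genuine obstacle in this proof: all the analytic content, namely fiber integration over $S$ and the commutation of $\dd$ with integration over the closed manifold $S$, has already been absorbed into the proofs of Propositions \ref{pppp} and \ref{dddd}. The only observation specific to the bar map is the elementary fact $\dd\mu=0$, which is what collapses the Leibniz formula to the clean identity $\dd\bar\om=\overline{\dd\om}$.
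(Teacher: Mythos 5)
Your proposal is correct and matches the paper's own argument exactly: the paper derives Proposition \ref{cor2} by identifying the bar map with the hat pairing against $\mu$ via \eqref{barb} and then invoking Propositions \ref{pppp} and \ref{dddd}. The only detail you make explicit that the paper leaves implicit is that $\dd\mu=0$ (as $\mu$ is a top-degree form on $S$) kills the second term in the Leibniz formula \eqref{deri}, which is a worthwhile clarification but not a departure from the paper's route.
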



\subsection{Momentum maps and nonequivariance}\label{momap_noneq}

We quickly review here some properties of momentum maps in order to fix our conventions (see \cite{MaRa99}). Let $G$ be a Lie group acting canonically on a symplectic manifold $(M,\omega)$. We will denote by $m\mapsto g\cdot m$, (resp. $m\mapsto m\cdot g$) a left, (resp. right) action of $G$ on $M$. Let $\xi_M$ be the infinitesimal generator associated to the Lie algebra element $\xi\in\mathfrak{g}$. We say that the action admits a momentum map, denoted $\mathbf{J}:M\rightarrow\mathfrak{g}^*$, if $\mathbf{J}$ verifies
\begin{equation}\label{def_momentum_map}
\mathbf{d}\langle\mathbf{J},\xi\rangle=\mathbf{i}_{\xi_M}\omega,\quad\text{for all}\quad\xi\in \mathfrak{g},
\end{equation}
where $\langle\mathbf{J},\xi\rangle$ denotes the function on $M$ defined by $\langle\mathbf{J},\xi\rangle(m)=\langle\mathbf{J}(m),\xi\rangle$. 

\paragraph{Connected symplectic manifold.}
When $M$ is connected, the \textit{nonequivariance  cocycle} $c:G\rightarrow\mathfrak{g}^*$ is defined by
\[
c(g):=\mathbf{J}(g\cdot m)-\operatorname{Ad}^*_{g^{-1}}\mathbf{J}(m),\quad\text{resp.}\quad c(g):=\mathbf{J}(m\cdot g)-\operatorname{Ad}^*_g\mathbf{J}(m),
\]
where $m\in M$ can be chosen arbitrarily. Taking the derivative of $c$ at the identity, one obtains the Lie algebra 2--cocycle $\sigma:\mathfrak{g}\times\mathfrak{g}\rightarrow \mathbb{R}$,
\begin{equation}\label{def_noneq_cocycle}
\sigma(\xi,\eta)=\left\langle\mathbf{J},[\xi,\eta]\right\rangle-\omega\left(\xi_M,\eta_M\right),\quad\text{resp.}\quad\sigma(\xi,\eta)=-\left\langle\mathbf{J},[\xi,\eta]\right\rangle-\omega\left(\xi_M,\eta_M\right).
\end{equation}
The cohomology class of the nonequivariance cocycles $c$ and $\si$
does not depend on the choice of the momentum map.

The momentum map $\mathbf{J}$ is said to be \textit{equivariant}  when $c=0$, and \textit{infinitesimally equivariant} when $\sigma=0$. When $\mathbf{J}$ is infinitesimally equivariant, then it is a Poisson map relative to the symplectic bracket on $M$ and the $(+)$ (resp. $(-)$) Lie-Poisson bracket on $\mathfrak{g}^*$ given by
\[
\{f,g\}_{\pm}=\pm\left\langle\mu,\left[\frac{\delta f}{\delta\mu},\frac{\delta f}{\delta\mu}\right]\right\rangle,\quad f,g\in\mathcal{F}(\mathfrak{g}^*).
\]

>From \eqref{def_momentum_map} we see the momentum map depends only on the Lie algebra action. Note that if there is no underlying Lie group action, we can still define the 2--cocycle $\sigma$ and the concept of infinitesimal equivariance is still well-defined.

The 2--cocycle $\sigma$ can be used to produce an infinitesimally equivariant momentum map. It suffices to consider the Lie algebra central extension $\hat{\mathfrak{g}}=\mathfrak{g}\oplus\mathbb{R}$ of $\mathfrak{g}$, with Lie bracket $[(\xi,a),(\eta,b)]=([\xi,\eta],\sigma(\xi,\eta))$. Indeed, the induced Lie algebra action of $\hat{\mathfrak{g}}$ on $M$ admits the infinitesimally equivariant momentum map $\hat{\mathbf{J}}:M\rightarrow\hat{\mathfrak{g}}^*$ given by
\begin{equation}\label{hat_momap}
\hat{\mathbf{J}}(m)=(\mathbf{J}(m),-1),\quad\text{resp.}\quad \hat{\mathbf{J}}(m)=(\mathbf{J}(m),1).
\end{equation}

\paragraph{Nonconnected symplectic manifold.}
We consider the case of a Hamiltonian left action,
the right actions can be treated in a similar way.
There is a $H^0(M)$-valued Lie algebra cocycle $\si$ on $\g$ measuring the nonequivariance of the momentum map:
\begin{equation}\label{sig}
\sigma(\xi,\eta)(m)=\left\langle\mathbf{J}(m),[\xi,\eta]\right\rangle-\omega\left(\xi_M,\eta_M\right)(m),\quad\forall m\in M.
\end{equation}
One sees that $\dd(\si(\xi,\et))=0$, so the cocycle $\si$ takes indeed values in $H^0(M)=\ker(\dd:\F(M)\to\Om^1(M))$.

To each point $m\in M$ one associates an element $[m]\in H_0(M)$ 
defined with the help of the duality pairing by
$\langle f,[m]\rangle=f(m)$ for all $f\in H^0(M)$.
Choosing one point in each connected component of $M$,
one gets a basis of $H_0(M)$.

\begin{proposition}\label{ue}
Let $\hat\g$ denote the central extension of $\g$ by $H^0(M)$
defined by $\si$. Its dual is identified with $\g^*\oplus H_0(M)$.
The induced Lie algebra action of $\hat{\mathfrak{g}}$ on $M$ admits the infinitesimally equivariant momentum map 
\begin{equation}\label{none}
\hat\J:M\to\hat\g^*,\quad\hat{\mathbf{J}}(m)=(\mathbf{J}(m),-[m])\in\g^*\oplus H_0(M).
\end{equation}
\end{proposition}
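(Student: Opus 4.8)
The plan is to verify directly that $\hat\J$ satisfies the two defining properties of an infinitesimally equivariant momentum map for the $\hat\g$-action: the momentum map equation \eqref{def_momentum_map}, and the vanishing of its nonequivariance cocycle computed via \eqref{sig}, both now on the central extension $\hat\g=\g\oplus H^0(M)$. The starting observation is that $H^0(M)$ is central in $\hat\g$, so the induced Lie algebra action is just the $\g$-action through the first factor; in particular the infinitesimal generator of $(\xi,a)\in\hat\g$ is $(\xi,a)_M=\xi_M$. Using the duality pairing identifying $\hat\g^*$ with $\g^*\oplus H_0(M)$ together with the relation $\langle a,[m]\rangle=a(m)$ for $a\in H^0(M)$, one evaluates the pairing of \eqref{none} as $\langle\hat\J(m),(\xi,a)\rangle=\langle\J(m),\xi\rangle-a(m)$, so that as a function on $M$ we have $\langle\hat\J,(\xi,a)\rangle=\langle\J,\xi\rangle-a$.

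For the momentum map condition I would apply the exterior derivative to this function. Since $a\in H^0(M)=\ker(\dd:\F(M)\to\Om^1(M))$ the term $\dd a$ vanishes, while the momentum map property of $\J$ gives $\dd\langle\J,\xi\rangle=\ii_{\xi_M}\om$. Hence $\dd\langle\hat\J,(\xi,a)\rangle=\ii_{\xi_M}\om=\ii_{(\xi,a)_M}\om$, which is precisely \eqref{def_momentum_map} for $\hat\J$; thus $\hat\J$ is a momentum map for the $\hat\g$-action.

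It remains to establish infinitesimal equivariance, which is the heart of the matter. I would compute the $H^0(M)$-valued nonequivariance cocycle $\hat\si$ of $\hat\J$ from \eqref{sig}, using that the bracket in $\hat\g$ reads $[(\xi,a),(\et,b)]=([\xi,\et],\si(\xi,\et))$. The central component $\si(\xi,\et)$ contributes $-\si(\xi,\et)$ to $\langle\hat\J,[(\xi,a),(\et,b)]\rangle$, and since $(\xi,a)_M=\xi_M$ and $(\et,b)_M=\et_M$, formula \eqref{sig} applied to $\hat\J$ yields
\[
\hat\si\bigl((\xi,a),(\et,b)\bigr)=\langle\J,[\xi,\et]\rangle-\si(\xi,\et)-\om(\xi_M,\et_M).
\]
Substituting the definition $\si(\xi,\et)=\langle\J,[\xi,\et]\rangle-\om(\xi_M,\et_M)$ from \eqref{sig} makes the right-hand side vanish identically. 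The only step requiring care is the bookkeeping of this central term, namely that the cocycle $\si$ was manufactured precisely so that the extra contribution to the pairing with $\hat\J$ absorbs the nonequivariance of $\J$; I foresee no genuine obstacle, the argument being the standard fact that a central extension by its own defining cocycle trivializes that cocycle, transcribed here to the $H^0(M)$-valued setting.
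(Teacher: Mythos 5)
Your proof is correct and is precisely the direct verification that the paper leaves implicit: Proposition \ref{ue} is stated there without proof, as the $H^0(M)$-valued transcription of the standard one-dimensional fact recalled just before it for connected $M$. Your two steps --- the momentum map condition via $\dd a=0$ for $a\in H^0(M)$, and the cancellation of $\si(\xi,\et)$ against the pairing of the central component of the bracket with $-[m]$ --- constitute exactly the intended argument.
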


As seen in the previous paragraph, for each connected component of $M$, 
we get a 1-dimensional central extension of $\g$ with equivariant momentum map. 
A characteristic Lie algebra 2-cocycle for this  extension is $\langle\si,[m]\rangle$,
where $m$ is an arbitrary point in this connected component.
There are special cases when it is possible to find ``smaller" central extensions of $\g$
admitting equivariant momentum maps on whole $M$:

\begin{proposition}\label{te}
Assume that the $H^0(M)$-valued non-equivariance cocycle $\si$ can be written in the form
$\si=T\o\si_T$, where $T:V\to H^0(M)$ is a linear map and
$\si_T$ is a $V$-valued 2-cocycle on $\g$.
Let $\hat\g_T$ be the central extension of $\g$ by $V$ defined with the cocycle $\si_T$.
Then the infinitesimal action of $\hat\g_T$ (inherited from $\g$)
admits an equivariant momentum map 
\[
\hat\J_T:M\to\hat\g_T^*=\g^*\oplus V^*,\quad\hat\J_T(m)=\left(\J(m),-T^*([m])\right).
\]
\end{proposition}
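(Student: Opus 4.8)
The plan is to check directly, from the defining property \eqref{def_momentum_map} of a momentum map and the nonequivariance formula \eqref{sig}, that the proposed $\hat\J_T$ works, the hypothesis $\si=T\o\si_T$ being exactly what cancels the nonequivariance cocycle. First I would fix notation on $\hat\g_T=\g\oplus V$: the bracket is $[(\xi,a),(\et,b)]=([\xi,\et],\si_T(\xi,\et))$, and since the action is inherited from $\g$ the central summand $V$ acts trivially, so the infinitesimal generator of $(\xi,a)$ is simply $\xi_M=(\xi,a)_M$. Unwinding $T^*$ through the pairing $\langle f,[m]\rangle=f(m)$ between $H^0(M)$ and $H_0(M)$ gives $\langle T^*([m]),a\rangle=\langle[m],T(a)\rangle=(T(a))(m)$, hence
\[
\langle\hat\J_T(m),(\xi,a)\rangle=\langle\J(m),\xi\rangle-(T(a))(m),\qquad (\xi,a)\in\hat\g_T.
\]

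Next I would verify that $\hat\J_T$ is a momentum map for the $\hat\g_T$-action. Differentiating the displayed function of $m$ yields $\dd\langle\hat\J_T,(\xi,a)\rangle=\dd\langle\J,\xi\rangle-\dd(T(a))$. The point at which the hypothesis on $T$ enters is that $T$ takes values in $H^0(M)=\ker(\dd\colon\F(M)\to\Om^1(M))$, so $\dd(T(a))=0$; combined with $\dd\langle\J,\xi\rangle=\ii_{\xi_M}\om$ and $\xi_M=(\xi,a)_M$ this gives $\dd\langle\hat\J_T,(\xi,a)\rangle=\ii_{(\xi,a)_M}\om$, as required.

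Then I would compute the $H^0(M)$-valued nonequivariance cocycle of $\hat\J_T$ via \eqref{sig}. Since the central generators act trivially and $[(\xi,a),(\et,b)]=([\xi,\et],\si_T(\xi,\et))$, the cocycle at $m$ equals
\[
\langle\hat\J_T(m),([\xi,\et],\si_T(\xi,\et))\rangle-\om(\xi_M,\et_M)(m)=\langle\J(m),[\xi,\et]\rangle-(T(\si_T(\xi,\et)))(m)-\om(\xi_M,\et_M)(m).
\]
By \eqref{sig} the first and third terms combine to $\si(\xi,\et)(m)$, so the expression reduces to $\si(\xi,\et)(m)-(T(\si_T(\xi,\et)))(m)$, which vanishes precisely because $\si=T\o\si_T$. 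Thus $\hat\J_T$ is infinitesimally equivariant.

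Finally, I would note that the whole argument can be packaged conceptually: the map $\Ps\colon\hat\g_T\to\hat\g$, $(\xi,a)\mapsto(\xi,T(a))$, is a Lie algebra homomorphism (again equivalent to $\si=T\o\si_T$), it intertwines the two inherited actions, and its transpose carries the momentum map $\hat\J$ of Proposition \ref{ue} to $\Ps^*\o\hat\J=\hat\J_T$; the general fact that pulling back an infinitesimally equivariant momentum map along a Lie algebra homomorphism again produces one then closes the proof. I do not expect a real obstacle here, since the content is entirely routine verification; the only care needed is to keep the dualities $V\leftrightarrow V^*$ and $H^0(M)\leftrightarrow H_0(M)$ straight, so that $\si=T\o\si_T$ lines up with the cancellation, and to use that $T$ lands in the \emph{closed} functions $H^0(M)$, which is exactly what preserves the momentum-map property.
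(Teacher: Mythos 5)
Your proof is correct: the paper states Proposition \ref{te} without proof, treating it as a routine consequence of \eqref{def_momentum_map}, \eqref{sig}, and the mechanism behind Proposition \ref{ue}, and your direct verification (momentum map property via $T(a)\in\ker\dd$, equivariance via the cancellation $\si=T\o\si_T$) is exactly that intended argument. Your closing remark that $\hat\J_T=\Ps^*\o\hat\J$ for the homomorphism $\Ps(\xi,a)=(\xi,T(a))$ is also correct and nicely explains why the proposition is an immediate corollary of Proposition \ref{ue}.
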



{\footnotesize

\bibliographystyle{new}
\addcontentsline{toc}{section}{References}

}

\end{document}